\begin{document}
\maketitle{ENGEL ELEMENTS IN GROUPS}{%
ALIREZA ABDOLLAHI}{Department of Mathematics, University of Isfahan, Isfahan 81746-73441, IRAN.\newline
  Email: a.abdollahi@math.ui.ac.ir}
\begin{abstract}
We give a survey of results on the structure of right and left Engel elements of a group. We also present some  new results in this topic.
\end{abstract}
\section{Introduction}
Let $G$ be any group and $x,y\in G$. Define inductively the $n$-Engel left normed commutator $$[x,_n y]=[x,\underset{n}{\underbrace{y,\dots,y}}]$$ of the pair $(x,y)$ for a given non-negative integer $n$, as follows:
$$[x,_0 y]:=x, \; [x,_1 y]:=[x,y]=x^{-1}y^{-1}xy=:x^{-1}x^y,$$ and  for  all $n>0$ $$[x,_n y]=[[x,_{n-1} y],y].$$
An element $a\in G$ is called left  Engel whenever for every element $g\in G$ there exists a non-negative integer $n=n(g,a)$ possibly
depending on the elements $g$ and $a$ such that $[g, _n a]=1$.  For a positive integer $k$,  an element $a\in G$ is
called a left  $k$-Engel element of $G$ whenever $[g,_k a]=1$ for all $g\in G$. An element $a\in G$ is called a bounded left
Engel element if it is left $k$-Engel for some $k$.
We denote by $L(G)$,
$L_k(G)$  and $\overline{L}(G)$, the set of left Engel elements, left  $k$-Engel elements, bounded left Engel elements of $G$, respectively.

In  definitions of various types of  left Engel elements $a$ of a group $G$, we observe that
 the variable element $g$ appears on the left hand side of the element $a$. So similarly one can define  various types of  right Engel
 elements $a$ in a group by letting  the variable element $g$ to appear (in the $n$-Engel commutator of $a$ and $g$) on the right hand side of the element $a$.
Therefore, an element $a\in G$ is called right  Engel whenever for every element $g\in G$ there exists a non-negative integer $n=n(g,a)$  such that $[a, _n g]=1$.  For a positive integer $k$,  an element $a\in G$ is
called a right  $k$-Engel element of $G$ whenever $[a,_k g]=1$ for all $g\in G$. An element $a\in G$ is called a bounded right
Engel element if it is right $k$-Engel for some $k$.
We will denote   $R(G)$,
$R_k(G)$  and $\overline{R}(G)$, the set of right Engel elements, right  $k$-Engel elements, bounded right Engel elements of $G$, respectively.\\
All these subsets are invariant under  automorphisms of $G$.\\
Groups in which all elements are left Engel   are called Engel groups and for a given positive integer $n$, a group is called $n$-Engel if all of whose elements are left $n$-Engel elements.  It is clear that  $$G=L(G) \Leftrightarrow G=R(G) \;\;\text{and}\;\; G=L_n(G) \Leftrightarrow G=R_n(G).$$
 A group is called bounded Engel if it is $k$-Engel for some positive integer $k$. Note that
$$L_1(G)\subseteq L_2(G)\subseteq \cdots \subseteq L_n(G)\subseteq \cdots\subseteq L(G) \;\;\;\text{and}\;\;\; \overline{L}(G)=\bigcup_{k\in\mathbb{N}}L_k(G)$$
$$R_1(G)\subseteq R_2(G)\subseteq \cdots \subseteq R_n(G)\subseteq \cdots \subseteq R(G)\;\;\;\text{and}\;\;\; \overline{R}(G)=\bigcup_{k\in\mathbb{N}}R_k(G)$$

As stated in \cite[p. 41 of Part II]{Robinson72-2}, the major goal of Engel theory can be stated as follows: to find conditions on a group $G$ which will ensure that $L(G)$, $\overline{L}(G)$, $R(G)$ and $\overline{R}(G)$ are subgroups and, if possible, coincide with the Hirsch-Plotkin radical, the Baer radical, the hypercenter and the $\omega$-center respectively. So let us put forward the following question.
\begin{qu}\label{q1}
For which groups $G$ and which positive integers $n$
\begin{enumerate}
\item $L(G)$ is a subgroup of $G$?
\item $R(G)$ is a subgroup of $G$?
\item $\overline{L}(G)$ is a subgroup of $G$?
\item $\overline{R}(G)$ is a subgroup of $G$?
\item $L_n(G)$ is a subgroup of $G$?
\item $R_n(G)$ is a subgroup of $G$?
\end{enumerate}
\end{qu}
 In the next sections   we shall discuss  Question \ref{q1} on various classes of groups $G$ and small positive integers $n$ and we also study many new questions extracted from it.

 The author has tried   the present  survey to be complete, but needless to say that  it does not contain  all  results on `the Engel structure' of groups.
Most of  results before 1970 was already surveyed in \cite[chapter 7 in Part II]{Robinson72-2} and  here we have only  sorted them  as
`left/right' results into separate sections. The latter reference is still more comprehensive than   ours for results before 1970.

  As   I
  believe  the following famous sentence of Hilbert \cite{Hilbert1902}, I have had  a tendency to write any question (not only ones  which are
  very difficult!) that I have encountered.

\emph{``As long as a branch of science offers an abundance of problems, so long
it is alive; a lack of problems foreshadows extinction or the cessation of
independent development.'' }

\section{Interaction of Right Engel Elements with  Left Engel Elements}
Baer \cite[Folgerung N and Folgerung A]{Baer57} proved that in groups with maximal condition on subgroups and in hyperabelian groups, a right
Engel element is a left Engel element.
The answer to the following question is still unknown.
\begin{qu}[Robinson] {\rm \cite[p. 370]{Robinson96}}\label{qu:Robinson=Baer}
Is it true that every right Engel element of any group is a left Engel element?
\end{qu}
  Heineken's result  \cite{Heineken60} gives the famous relation between left and right Engel elements of an arbitrary group;
 it can be read as follows:  {\em the inverse of a right Engel element is a left one}.
\begin{thm}[Heineken]{\rm\cite{Heineken60}}\label{t:He}
In any group $G$,
\begin{enumerate}
\item for any two elements $x,g\in G$ and all integers $n\geq 1$, $[x,_{n+1} g]=[g^{-x},_ng]^g$.
 \item $R(G)^{-1}\subseteq L(G)$.
\item $R_n(G)^{-1} \subseteq L_{n+1}(G)$.
\item $\overline{R}(G)^{-1} \subseteq \overline{L}(G)$.
\end{enumerate}
\end{thm}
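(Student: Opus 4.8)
The entire theorem rests on the commutator identity of part (1); once that is in hand, parts (2)--(4) are short deductions. The plan is to prove (1) by induction on $n$, the engine being the elementary conjugation rule $[u,v]^w=[u^w,v^w]$. In the special case $v=w=g$, where $g^g=g$, this reads $[u^g,g]=[u,g]^g$, and a trivial further induction upgrades it to $[b,_n c]^w=[b^w,_n c^w]$ for every $n$, which is the only auxiliary fact I will need for the corollaries.

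For the base case $n=1$ I would expand both sides and check they coincide; the bookkeeping is shortened by the identity $[x,g]=g^{-x}g$, obtained by cancelling $x^{-1}x$ in $x^{-1}g^{-1}xg$, after which $[x,_2 g]$ and $[g^{-x},g]^g$ are both routine to reconcile. For the inductive step, assuming $[x,_{n+1}g]=[g^{-x},_n g]^g$, I would compute
\[ [x,_{n+2}g]=\big[[x,_{n+1}g],g\big]=\big[[g^{-x},_n g]^g,g\big]=\big[[g^{-x},_n g],g\big]^g=[g^{-x},_{n+1}g]^g, \]
where the third equality is exactly $[u^g,g]=[u,g]^g$ applied with $u=[g^{-x},_n g]$. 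This closes the induction and establishes (1).

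For (2)--(4) I would run the identity (1) with the repeated element taken to be $a^{-1}$. For any $x\in G$, since $(a^{-1})^{-x}=a^x$, part (1) yields $[x,_{n+1}a^{-1}]=[a^x,_n a^{-1}]^{a^{-1}}$, so $[x,_{n+1}a^{-1}]=1$ if and only if $[a^x,_n a^{-1}]=1$. Conjugating the latter by $x^{-1}$ and invoking $[b,_n c]^w=[b^w,_n c^w]$ transforms it into $[a,_n (a^{-1})^{x^{-1}}]=1$. Hence, if $a\in R_n(G)$, then $[a,_n h]=1$ for \emph{every} $h$, in particular for $h=(a^{-1})^{x^{-1}}$, so $[x,_{n+1}a^{-1}]=1$ for all $x$, i.e. $a^{-1}\in L_{n+1}(G)$; this is (3). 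Part (2) is the same computation with $n=n(h)$ allowed to depend on $h=(a^{-1})^{x^{-1}}$, giving $a^{-1}\in L(G)$; and (4) is immediate from (3), since $a\in R_k(G)$ for some $k$ forces $a^{-1}\in L_{k+1}(G)\subseteq\overline{L}(G)$.

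The only genuinely delicate point is part (1); everything after it is quantifier bookkeeping. The hard part will be pinning down the conjugation convention—namely $g^{-x}=(g^{-1})^x=(g^x)^{-1}$—so that the single surviving conjugation by $g$ on the right-hand side, together with the fact that conjugation by $g$ commutes with the operator $u\mapsto[u,g]$, matches the claimed formula exactly, and so that the base case verification and the substitution $(a^{-1})^{-x}=a^x$ in the corollaries come out with the correct sides. Once those side/sign conventions are fixed, the induction and the three consequences follow mechanically.
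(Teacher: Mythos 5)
Your proposal is correct and takes essentially the same route as the paper: its proof of part (1) is the same telescoping use of the key fact $[u^g,g]=[u,g]^g$ (your induction just makes explicit what the paper leaves implicit in its last displayed step), and its deduction of (2)--(4) is the same substitution, with the paper invoking conjugation-invariance of $R_n(G)$ where you conjugate the identity by $x^{-1}$ inline. No gaps.
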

\begin{proof}
All parts follows easily from 1. We may write
\begin{align*}
    [x,_{n+1} g]&=\big[[x,g],_ng\big]\\
    &=[x^{-1}g^{-1}xg,_ng]\\
    &=\big[\big(g^{-1}\big)^xg,_ng\big]\\
    &=\big[[\big(g^{-1}\big)^xg,g],_{n-1}g\big]\\
    &=\big[[\big(g^{-1}\big)^x,g]^g,_{n-1}g\big]\\
    &=\big[\big(g^{-1}\big)^x,_ng\big]^g\\
    \end{align*}
Now for instance, if $g\in G$ such that $g^{-1}\in R_n(G)$, then $\big(g^{-1}\big)^x\in R_n(G)$ for any $x\in G$ and so  $\big[\big(g^{-1}\big)^x,_n g\big]=1$ which implies that $[x,_{n+1}g]=1$,  by part 1.
\end{proof}
Apart from Heineken's result, we do not know  of other inclusions  holding  between Engel subsets in an arbitrary group.
We do not even know the answer of the following question.
\begin{qu}\label{qu:Ab}
\begin{enumerate}
\item For which integers $n\geq 1$,  $R_n(G)\subseteq L(G)$ for any group $G$?
\item Is it true that $\overline{R}(G)\subseteq L(G)$ for any group $G$?
\end{enumerate}
\end{qu}
The answer of Questions \ref{qu:Robinson=Baer} and \ref{qu:Ab} are known for many classes of groups (that we shall see them), but I do not know of even a ``knock" on the general case. Let us do that for part 1 of Question \ref{qu:Ab}. Suppose    $n>0$ is an integer  such that $R_n(G)\subseteq L(G)$ for any group $G$. We show that there is an integer $k>0$ depending only on $n$ such that $R_n(G)\subseteq L_k(G)$ for any group $G$. Let $\mathcal{G}_n$ be the group given by the following presentation $$\langle x,y \;|\; [x,_n X]=1 \;\;\text{for all}\;\; X\in\langle x,y\rangle \rangle.$$
Thus, it follows that  if  $G$ is a group generated by two elements $a$ and $b$ such that $a\in R_n(G)$,  then there is a group epimorphism $\varphi$ from $\mathcal{G}_n$ onto $G$ such that $x^\varphi=a$ and $y^\varphi=b$.  Since by assumption  $R_n(\mathcal{G}_n)\subseteq L(\mathcal{G}_n)$, we have $[y,_k x]=1$ for some $k$ depending only on $n$. This implies that $$1=[y,_k x]^\varphi=[y^\varphi,_k x^\varphi]=[b,_k a].$$
Therefore, to confirm validity of part 2 of Question \ref{qu:Ab}, one should find for any positive integer $n$, an integer $k$ such that $R_n(G)\subseteq L_k(G)$ for all groups $G$. So let us put forward the following question.
\begin{qu}
For which positive integers $n$, there exists a positive integer $k$ such that $R_n(G)\subseteq L_k(G)$ for all groups $G$?
\end{qu}
To refute  part 2 of Question \ref{qu:Ab} which is a question between bounded and unbounded Engel sets, one has to answer positively the following question on bounded Engel sets. Note that we do not claim answering this question is easier than else or vise versa!
\begin{qu}
Is there a positive integer $n$ such that for any given positive integer $k$ there is a group $G_k$ with $R_n(G_k)\nsubseteq L_k(G_k)$?
\end{qu}
What we know about other studies on  relations between left and right Engel elements are mostly on the `negative side'.
The following example of Macdonald  bounds Heineken's result ``$R_n^{-1}\subseteq L_{n+1}$''.
\begin{thm}[Macdonald] {\rm \cite{Macdonald}}
For any prime number $p$ and each multiple $n>2$ of $p$, there is a finite metabelian $p$-group $G$ containing an element $a\in R_n(G)$ such that $a\not\in L_n(G)$ and $a^{-1}\not\in L_n(G)$
\end{thm}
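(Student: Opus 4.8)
The plan is to convert the whole statement into the language of the integral (or modular) group ring and reduce it to one non-membership assertion about an ideal, which is exactly where the hypothesis $p\mid n$ will be used. Since the group $G$ to be built is metabelian, $A:=G'$ is abelian and becomes a module over $R:=\mathbb{F}_p[Q]$, where $Q:=G/G'$, the action being conjugation written additively. For $w\in A$ and $g\in G$ with image $\bar g\in Q$ one has $[w,g]=w(\bar g-1)$, so $[w,_m g]=w(\bar g-1)^m$. Because $[a,g]\in A$ for every $a,g$ and $[g,a]=-[a,g]$ in $A$, I would record the dictionary $a\in R_n(G)\iff [a,g](\bar g-1)^{n-1}=0$ for all $g$, and $a\in L_n(G)\iff [a,g](\bar a-1)^{n-1}=0$ for all $g$. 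The difference between the right and left conditions is thus the difference between weighting the commutator $[a,g]$ by $(\bar g-1)^{n-1}$ (tied to the same $g$) and by the fixed $(\bar a-1)^{n-1}$.

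Next I would dispatch the element $a^{-1}$ so as to collapse the three required conditions into two. Using $[g,a^{-1}]=-[g,a]\,\bar a^{-1}$ and $(\bar a^{-1}-1)^{n-1}=(-1)^{n-1}\bar a^{-(n-1)}(\bar a-1)^{n-1}$, together with the fact that $\bar a$ is a unit of $R$, one gets $[g,_n a^{-1}]=\pm[g,a]\,\bar a^{-n}(\bar a-1)^{n-1}$; since $\bar a^{-n}$ is invertible this vanishes for all $g$ exactly when $[g,_n a]=1$ for all $g$. Hence $a^{-1}\in L_n(G)\iff a\in L_n(G)$, so it suffices to construct a finite metabelian $p$-group $G$ and an element $a$ with $a\in R_n(G)\setminus L_n(G)$; the statement $a^{-1}\notin L_n(G)$ is then automatic.

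For the construction I would take $Q=\langle x\rangle\times\langle y\rangle$ with $x,y$ of order $p^k$ for some $p^k>n$, and realise $G$ as the quotient of the free metabelian group on $a,b$ (with $\bar a=x$, $\bar b=y$) in which these orders are imposed and in which $A=G'$ is the cyclic $R$-module $R/J$ generated by $c:=[a,b]$. Here $J\trianglelefteq R=\mathbb{F}_p[Q]$ is the ideal generated by the elements obtained from expanding the right-Engel condition $[a,g](\bar g-1)^{n-1}=0$ via the metabelian identities $[a,w]=-w(x-1)$ and $[a,b^{\,j}]=c\,(1+y+\cdots+y^{j-1})$: explicitly $J=\bigl\langle (x-1)(u-1)^{n-1},\ (1+y+\cdots+y^{j-1})(u-1)^{n-1}\ :\ u=x^i y^{j}\in Q\bigr\rangle$. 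By construction $a\in R_n(G)$, and $G$ is a finite metabelian $p$-group (it is nilpotent because the augmentation ideal of $\mathbb{F}_p[Q]$ is nilpotent). Under this dictionary $a\in L_n(G)\iff c(x-1)^{n-1}=0\iff (x-1)^{n-1}\in J$, so everything reduces to the single non-membership statement $(x-1)^{n-1}\notin J$.

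The main obstacle is precisely this non-membership, and it is the only place the hypotheses enter. I would prove it by passing to the associated graded ring $\mathrm{gr}\,R\cong \mathbb{F}_p[X,Y]/(X^{p^k},Y^{p^k})$, where $X,Y$ are the classes of $x-1,y-1$; since $n-1<p^k$, the degree-$(n-1)$ part of the ideal of leading forms of $J$ is spanned by $\{(iX+jY)^{n-1}:p\nmid j\}=\{(sX+Y)^{n-1}:s\in\mathbb{F}_p\}$, while the leading form of $(x-1)^{n-1}$ is $X^{n-1}$. The task becomes the linear-algebra fact that $X^{n-1}$ is \emph{not} a combination of the forms $(sX+Y)^{n-1}$. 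Writing out the coefficients with Lucas' theorem on $\binom{n-1}{t}\bmod p$, one sees that the constraints indexed by exponents $t\equiv 1\pmod{p-1}$ collide, and the collision is forced exactly by $p\mid n$ (the underlying mechanism being the Frobenius identity $(z-1)^p=z^p-1$, which ties the divisibility to the collapse of these forms). I expect the delicate points to be (i) checking that the leading-form reduction is faithful — that graded non-membership really yields $(x-1)^{n-1}\notin J$ — and (ii) handling the smallest multiples of $p$, where the degree-$(n-1)$ form space is small and the ungraded computation, or a slight enlargement of $Q$, must be invoked directly.
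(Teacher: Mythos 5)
A preliminary point: the survey states this theorem without proof---it is quoted from Macdonald's paper---so there is no in-paper argument to compare yours against, and what follows assesses your proposal on its own terms. Most of it is correct and well organized: the metabelian module dictionary, the reduction $a^{-1}\in L_n(G)\Leftrightarrow a\in L_n(G)$ in any metabelian group (so the clause about $a^{-1}$ comes for free), the identification $a\in L_n(G)\Leftrightarrow (x-1)^{n-1}\in J$, and the graded step, whose faithfulness (your worry (i)) is automatic: $R\cong\mathbb{F}_p[X,Y]/(X^{p^k},Y^{p^k})$ exactly, every generator of $J$ (including the norm relators forced by $a^{p^k}=b^{p^k}=1$, which have order $p^k-1\ge n$ and are harmless) has order at least $n-1$, and $X^{n-1}$ is homogeneous, so $(x-1)^{n-1}\in J$ would place $X^{n-1}$ in the span $V=\langle (sX+Y)^{n-1}: s\in\mathbb{F}_p\rangle$ of the degree-$(n-1)$ components of the generators. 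The linear algebra then works for every $n\ge 2p$: take $t=n-p$, so that $1\le t<n-1$ and $t\equiv n-1\pmod{p-1}$ (the congruence that matters is with $n-1$, not with $1$); Lucas gives $\binom{n-1}{n-p}\not\equiv 0\pmod p$, so $X^{n-1}=\sum_s c_s(sX+Y)^{n-1}$ would force $\sum_s c_s s^{t}=0$ and $\sum_s c_s s^{n-1}=1$ simultaneously, which is impossible since $s^t=s^{n-1}$ for all $s\in\mathbb{F}_p$. In particular the case $p=2$ is completely settled, because $n>2$ and $2\mid n$ give $n\ge 4=2p$.

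The genuine gap is the case $n=p$ for odd $p$, which the theorem asserts, and there your key claim is false rather than merely delicate. First, $\sum_{s\in\mathbb{F}_p}(sX+Y)^{p-1}=-X^{p-1}$ (the coefficient of $X^tY^{p-1-t}$ is $\binom{p-1}{t}\sum_s s^t$, which vanishes for $t<p-1$ and equals $-1$ at $t=p-1$), so $X^{p-1}\in V$ and the graded argument says nothing. Second---and this is fatal---the ungraded statement fails as well: $J$ contains $(y-1)^{p-1}$ (the generator with $u=y$, $j=1$), $(x-1)^p$ (the generator $(x-1)(u-1)^{p-1}$ with $u=x$), and the sum of generators $\sum_{s=0}^{p-1}(x^sy-1)^{p-1}$; using $1+z+\cdots+z^{p-1}\equiv (z-1)^{p-1}\pmod p$ one computes
$$\sum_{s=0}^{p-1}(x^sy-1)^{p-1}\equiv (x-1)^{p-1}\bigl((y-1)^{p-1}-1\bigr)\equiv -(x-1)^{p-1} \pmod{\bigl((x-1)^p,\,(y-1)^{p-1}\bigr)},$$
whence $(x-1)^{p-1}\in J$ and $a\in L_p(G)$ in your group. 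The same manipulation proves more: in \emph{any} metabelian group whose derived subgroup has exponent $p$, every right $p$-Engel element is automatically a left $p$-Engel element. Consequently neither of your fallbacks can succeed: the ungraded computation proves the wrong inclusion, and enlarging $Q$ or adding generators changes nothing, since the three families of relators above occur in any such group and kill $[b,_p a]$. A correct treatment of $n=p$ must leave your coefficient ring, for instance by letting $G'$ have exponent $p^2$, where $1+z+\cdots+z^{p-1}$ and $(z-1)^{p-1}$ differ by $p$ times a correction term and the collapse above holds only modulo $p$. As written, your proof establishes the theorem only for $n\ge 2p$.
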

Macdonald's result was sharpened by L.-C. Kappe \cite{Kappe81}.
Newman and Nickel \cite{NewmanNickel94} showed that the situation may be  more bad: no non-trivial power of a right $n$-Engel element
can be a left $n$-Engel element.
\section{Four Engel Subsets and  Corresponding Subgroups}
In the most of  groups $G$ for which we know that  parts 1 to 4 of Question \ref{q1} are all true, the corresponding Engel subsets are equal to   the following subgroups, respectively:
the Hirsch-Plotkin  radical, the hypercenter, the Baer radical and the $\omega$-center of $G$.  In this section, we first shortly recall  definitions of these subgroups and then in the next section we collects known relations with the corresponding Engel subsets. There are also two other less famous  subgroups of an arbitrary group $G$  denoted by $\varrho(G)$ and $\overline{\varrho}(G)$ which are related to the right Engel elements.

Let $G$ be any group. We denote by   $Fitt(G)$  the Fitting subgroup of $G$ which is  the subgroup
generated by all normal nilpotent subgroups of $G$. By \cite[p. 100]{Fitting38}  the normal closure of each element of $Fitt(G)$ in $G$ is nilpotent.\\
 Let $HP(G)$ (called the Hirsch-Plotkin radical of $G$) be the subgroup generated by all normal locally nilpotent subgroups of $G$.
  Then by \cite{Hirsch55} or \cite{Plotkin54} $HP(G)$ is locally nilpotent.\\
We denote by $B(G)$ the set of elements $x\in G$ such that $\langle x\rangle$  is  a subnormal  subgroup in  $G$.
Then by \cite[\S 3, Satz 3]{Baer55} $B(G)$ (called the Baer radical of $G$) is a normal locally nilpotent subgroup of $G$ such that
every cyclic subgroup of $B(G)$ is subnormal in $G$.\\
We define inductively $\zeta_\alpha(G)$ (called $\alpha$-center of $G$) for each ordinal number $\alpha$. For $\alpha=0,1$, we have $\zeta_0(G)=1$
and $\zeta_1(G)=Z(G)$ the center of $G$. Now suppose $\zeta_\beta(G)$ has been defined for any ordinal $\beta<\alpha$.
If $\alpha$ is not a limit ordinal (i.e., $\alpha=\alpha'+1$ for some ordinal $\alpha'<\alpha$), we define $\zeta_{\alpha}(G)$ to be
such that $$Z\big(\frac{G}{\zeta_{\alpha'}(G)}\big)=\frac{\zeta_{\alpha}(G)}{\zeta_{\alpha'}(G)},$$    and if $\alpha$ is a limit ordinal,
we define $\zeta_\alpha(G)=\displaystyle\bigcup_{\beta<\alpha}\zeta_\beta(G)$.\\
We denote by $\omega$ the ordinal of natural numbers $\mathbb{N}$ with the usual order $<$. The ordinal $\omega$ is the first infinite ordinal and it
is a limit one. It follows that  $$\zeta_\omega(G)=\bigcup_{\beta<\omega}\zeta_{\beta}(G).$$
Since every ordinal $\beta<\omega$ is a finite one,  every such   $\beta$ can be considered as a non-negative integer. Thus we have
$$Z\big(\frac{G}{\zeta_{i}(G)}\big)=\frac{\zeta_{i+1}(G)}{\zeta_{i}(G)} \;\; {\rm for ~ each ~ integer~} i\geq 0.$$
Since the cardinal of a group $G$ cannot be exceeded, there is an ordinal $\beta$ such that $\zeta_\lambda(G)=\zeta_{\beta}(G)$ for all ordinal $\lambda\geq \beta$.
For such an ordinal $\beta$, we call $\zeta_{\beta}(G)$ the hypercenter of $G$ and it will be denoted by $\overline{\zeta}(G)$.\\
We denote by $Gr(G)$ the set of elements $x\in G$ such that $\langle x\rangle$  is  an ascendant  subgroup in  $G$.
Then by \cite[Theorem 2]{Gruenberg59} $Gr(G)$ (called the Gruenberg radical of $G$) is a normal locally nilpotent subgroup of $G$
such that every cyclic subgroup of $Gr(G)$ is ascendant in $G$.
A group $G$ is called a Fitting, hypercentral, Baer or Gruenberg group if $Fitt(G)=G$, $\overline{\zeta}(G)=G$, $B(G)=G$ and $Gr(G)=G$, respectively.
Note that $$Fitt(G)\leq B(G)\leq Gr(G)\leq HP(G)$$ for any group $G$.

For a group $G$, following Gruenberg \cite[p. 159]{Gruenberg59} we define $\varrho(G)$ to be the set of all elements
$a$ of $G$ such that $\langle x\rangle$ is an ascendant  subgroup of $\langle x\rangle\langle a\rangle^G$ for every $x\in G$.
Similarly, $\overline{\varrho}(G)$ is defined to be the set of all elements $a\in G$  for which there is a positive integer $n=n(a)$
such that $\langle x\rangle$ is a subnormal  subgroup  in $\langle x\rangle\langle a\rangle^G$ of defect at most $n$  for every $x\in G$.\\
By \cite[Theorem 3]{Gruenberg59} $\varrho(G)$ and $\overline{\varrho}(G)$ are characteristic subgroups of $G$ satisfying
$$\zeta_\omega(G)\leq \overline{\varrho}(G) \;\;\text{and}\;\; \overline{\zeta}(G)\leq \varrho(G).$$
In addition, $\varrho(G)\leq Gr(G)$ and $\overline{\varrho}(G)\leq B(G)$.

\subsection{Left Engel Elements}
In this section we will deal with left Engel elements.
\begin{prop}
For any group $G$, $HP(G)\subseteq L(G)$.
\end{prop}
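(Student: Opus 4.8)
The plan is to exploit the two facts recorded above about the Hirsch--Plotkin radical: that $HP(G)$ is a \emph{normal} subgroup of $G$, and that it is locally nilpotent. Fix $a\in HP(G)$; I must produce, for each $g\in G$, an integer $n$ with $[g,_n a]=1$. The point that needs care is that $g$ is an \emph{arbitrary} element of $G$ and need not lie in $HP(G)$, so I cannot apply local nilpotency of $HP(G)$ directly to the pair $(g,a)$.

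First I would bring the Engel sequence inside a finitely generated nilpotent subgroup. Using the convention $[g,a]=(a^{g})^{-1}a$ and the normality $HP(G)\trianglelefteq G$, both $a^{g}=g^{-1}ag$ and $a$ lie in $HP(G)$, so $[g,a]\in HP(G)$. I then pass to the two-generator subgroup
$$K=\langle a,\;a^{g}\rangle\le HP(G).$$
Being finitely generated inside a locally nilpotent group, $K$ is nilpotent; let $c$ be its nilpotency class, so that $\gamma_{c+1}(K)=1$, where $\gamma_k(K)$ denotes the $k$-th term of the lower central series, with $\gamma_1(K)=K$ and $\gamma_{k+1}(K)=[\gamma_k(K),K]$.

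The heart of the argument is a collecting induction showing $[g,_k a]\in\gamma_k(K)$ for all $k\ge 1$. For the base case, $[g,a]=(a^{g})^{-1}a\in K=\gamma_1(K)$. For the inductive step, assuming $[g,_k a]\in\gamma_k(K)$ and noting $a\in K$, I obtain
$$[g,_{k+1} a]=\big[[g,_k a],a\big]\in[\gamma_k(K),K]=\gamma_{k+1}(K).$$
Taking $k=c+1$ forces $[g,_{c+1} a]=1$, so $a\in L(G)$ with a bound $c+1$ that depends on $g$ through $K$. Since $a\in HP(G)$ was arbitrary, this yields $HP(G)\subseteq L(G)$.

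I expect the only genuine obstacle to be the one flagged at the outset: reconciling the arbitrariness of $g$ with the local nilpotency hypothesis, which is available only \emph{inside} $HP(G)$. This is exactly what the normality of $HP(G)$ buys, since it forces $[g,a]$ — and hence the entire tail of the Engel sequence — back into $HP(G)$; thereafter, confining everything to the nilpotent subgroup $K$ makes the iterated commutators descend the lower central series automatically. Note that the bound $c+1$ genuinely depends on $g$, consistent with $a$ being a (possibly unbounded) left Engel element rather than a bounded one.
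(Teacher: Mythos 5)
Your proof is correct. It shares the paper's core reduction --- normality of $HP(G)$ puts $a^g$, and hence the two-generator subgroup $K=\langle a,a^g\rangle$, inside $HP(G)$, and local nilpotency makes $K$ nilpotent of some class $c$ --- but it finishes differently. The paper appeals to Heineken's identity $[x,_{n+1}g]=[g^{-x},_{n}g]^g$ (part 1 of the Heineken theorem proved just before), so that the Engel commutator of the arbitrary element becomes a conjugate of a weight-$(n+1)$ left-normed commutator inside the class-$n$ subgroup, hence trivial. You instead prove by induction that $[g,_k a]\in\gamma_k(K)$, forcing $[g,_{c+1}a]\in\gamma_{c+1}(K)=1$. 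The two finishes are equally strong and give the same bound; yours is self-contained, needing no prior identity, while the paper's is a one-line application of a lemma it has already established and reuses elsewhere (e.g.\ for $R(G)^{-1}\subseteq L(G)$). In both arguments the ``obstacle'' you flag --- that $g$ is arbitrary and need not lie in $HP(G)$ --- is resolved in exactly the same way, by normality of the Hirsch--Plotkin radical.
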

\begin{proof}
 Let $g\in HP(G)$ and $x\in G$. Then $\langle g^{-x},g\rangle\leq HP(G)$, as $HP(G)$ is normal in $G$. Thus $\langle g^{-x},g \rangle$ is nilpotent of class at most $n\geq 1$, say. By  Theorem \ref{t:He}, we have $[x,_{n+1} g]=[g^{-x},_{n}g]^g$. As $[g^{-x},_n g]=1$, we have $[x,_{n+1} g]=1$. This implies that
$g\in L(G)$.
\end{proof}
 Let $p$ be a prime number or zero and let  $d\geq 2$ be any integer.  Golod \cite{Golod} has constructed a non-nilpotent (infinite) $d$-generated, residually finite, $p$-group (torsion-free group whenever $p=0$)  $G_d(p)$ in which every $d-1$ generated subgroup is a nilpotent group. Hence for any $d\geq 3$ every two generated subgroup of $G_d(p)$ is nilpotent so that the Golod group $G_d(p)$ is an Engel group, that is, $G_d(p)=R(G_d(p))=L(G_d(p))$.
Therefore for an arbitrary group $G$, it is not necessary to have  $HP(G)=L(G)$ as
$L(G_3(p))\not=HP(G_3(p))$.
\begin{prop}
For any group $G$, $B(G)\subseteq \overline{L}(G)$.
\end{prop}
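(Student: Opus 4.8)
The plan is to read off the bound directly from the subnormality of $\langle a\rangle$. Fix $a\in B(G)$. By the cited theorem of Baer the cyclic group $\langle a\rangle$ is subnormal in $G$, so there is a finite chain $\langle a\rangle=H_0\triangleleft H_1\triangleleft\cdots\triangleleft H_d=G$ of some defect $d=d(a)$. I claim $a\in L_{d+1}(G)$; since $\overline{L}(G)=\bigcup_{k}L_k(G)$, this gives $a\in\overline{L}(G)$ and hence the asserted inclusion. Observe that the bound $d+1$ depends on $a$ through its defect, which is exactly why the natural target is the \emph{union} $\overline{L}(G)$ rather than a single $L_k(G)$.

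The first step is the standard commutator descent along a subnormal series. Writing $H=\langle a\rangle$ and letting $[G,_iH]$ denote the iterated commutator subgroup (with $[G,_0H]=G$ and $[G,_{i+1}H]=[[G,_iH],H]$), I would prove by induction on $i$ that $[G,_iH]\leq H_{d-i}$ for $0\leq i\leq d$. The base case $i=0$ is the trivial equality $[G,_0H]=G=H_d$, and the inductive step uses only $H=H_0\leq H_{d-i-1}$ together with $H_{d-i-1}\triangleleft H_{d-i}$ and the monotonicity of commutator subgroups in both arguments: thus $[G,_{i+1}H]=[[G,_iH],H]\leq[H_{d-i},H_{d-i-1}]\leq H_{d-i-1}$. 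At $i=d$ this yields $[G,_dH]\leq H_0=\langle a\rangle$. A routine parallel induction shows $[g,_i a]\in[G,_iH]$ for every $g\in G$, so in particular $[g,_d a]\in\langle a\rangle$ for all $g\in G$.

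The second step is the point that upgrades ``subnormal'' to ``Engel'': because $\langle a\rangle$ is cyclic, hence abelian, the element $[g,_d a]$ commutes with $a$, and therefore $[g,_{d+1}a]=[[g,_d a],a]=1$ for every $g\in G$. This is precisely the statement $a\in L_{d+1}(G)$, which finishes the argument. I do not expect a genuine obstacle here; the only step needing care is the commutator calculus of the preceding paragraph---the descent $[G,_dH]\leq\langle a\rangle$ and the inclusion $[g,_i a]\in[G,_iH]$---which must be carried out with the paper's convention $[x,y]=x^{-1}x^y$, but both are entirely routine and do not require Heineken's theorem.
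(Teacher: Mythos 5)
Your proof is correct and is essentially the paper's own argument: both extract $[G,_k\langle a\rangle]\leq\langle a\rangle$ from the subnormal defect and then use the abelianness of the cyclic group $\langle a\rangle$ to get $[G,_{k+1}\langle a\rangle]=1$, hence $a\in L_{k+1}(G)\subseteq\overline{L}(G)$. The only difference is that the paper simply quotes the standard fact $[G,_k\langle a\rangle]\leq\langle a\rangle$ for a subnormal subgroup of defect $k$, while you prove it by the explicit induction down the subnormal chain.
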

\begin{proof}
  Let $g\in B(G)$. Thus $\langle g\rangle$ is subnormal in $G$ of defect $k$, say. Therefore $\langle g\rangle[G,_k\langle g\rangle]=\langle g\rangle$. It follows that $[G,_k \langle g\rangle]\leq \langle g\rangle$ and so $[G,_{k+1} \langle g\rangle]=1$. Hence $[x,_{k+1} g]=1$ for all $x\in G$ and $g\in \overline{L}(G)$. \end{proof}
  So, elements of the Hirsch-Plotkin radical (the Baer radical, resp.) of a group are potential examples of (bounded, resp.) left Engel elements of a group.
 An element of order $2$ (if exists) in a group, under some extra conditions, can also belong to the set of (bounded) left Engel elements.
\begin{prop}\label{inv}
Let $x$ be an element of any group  $G$ such that $x^2=1$.
\begin{enumerate}
 \item $[g,_n x]=[g,x]^{(-2)^{n-1}}$ for all $g\in G$ and all integers $n\geq 1$.
 \item If every commutator of weight $2$ containing $x$ is a $2$-element, then $x\in L(G)$. In particular, if $G'$ is a $2$-group, then $x\in L(G)$.
 \item  If every commutator of weight $2$ containing $x$ is of order dividing $2^n$, then $x\in L_{n+1}(G)$. In particular, if $G'$ is of exponent dividing $2^n$, then $x\in L_{n+1}(G)$.
 \end{enumerate}
\end{prop}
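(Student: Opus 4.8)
The plan is to prove part 1 by induction on $n$, powered by a single conjugation identity, and then read off parts 2 and 3 as immediate numerical consequences of the resulting explicit formula. The crux of the whole proposition is a short observation about where the hypothesis $x^2=1$ bites.

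First I would establish the key relation that $x$ \emph{inverts} every commutator of the form $[g,x]$; that is, writing $c=[g,x]$, one has $c^x=c^{-1}$. This is exactly the place the hypothesis $x^2=1$ (equivalently $x=x^{-1}$) is used: expanding $c=g^{-1}x^{-1}gx=g^{-1}xgx$ and $c^x=x^{-1}cx=xcx$, and then simplifying with $x^2=1$, yields $c^x=xg^{-1}xg=c^{-1}$. This is the one genuinely non-routine step; everything afterwards is bookkeeping.

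Next, from $c^x=c^{-1}$ I would record, for any integer $m$, the identity $[c^m,x]=(c^m)^{-1}(c^m)^x=c^{-m}(c^x)^m=c^{-m}c^{-m}=c^{-2m}$. In words: commuting any power of $c$ with $x$ multiplies the exponent by $-2$. With this in hand the induction for part 1 is immediate. The base case $n=1$ is the tautology $[g,_1x]=[g,x]=[g,x]^{(-2)^0}$. For the inductive step, assuming $[g,_nx]=c^{(-2)^{n-1}}$ with $c=[g,x]$, I apply the displayed identity with $m=(-2)^{n-1}$ to get $[g,_{n+1}x]=[c^{(-2)^{n-1}},x]=c^{-2\cdot(-2)^{n-1}}=c^{(-2)^n}$, which is the claimed formula for $n+1$.

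Finally, parts 2 and 3 follow purely arithmetically. Since $(-2)^{n-1}=(-1)^{n-1}2^{n-1}$ has $2$-adic valuation $n-1$, part 1 shows that $[g,_nx]=1$ whenever the order of $[g,x]$ divides $2^{n-1}$. For part 3, if every weight-$2$ commutator containing $x$ has order dividing $2^n$, then $[g,_{n+1}x]=[g,x]^{(-2)^n}=1$ for all $g$, so $x\in L_{n+1}(G)$; the ``in particular'' clause is the special case $[g,x]\in G'$ with $G'$ of exponent dividing $2^n$. For part 2, if each $[g,x]$ is merely a $2$-element, say of order $2^{m(g)}$, then any $n\ge m(g)+1$ forces $[g,_nx]=1$, so for every $g$ some $n$ kills the Engel commutator and hence $x\in L(G)$; the special case is $G'$ a $2$-group. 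I do not expect a real obstacle: once the inversion identity $c^x=c^{-1}$ is isolated, the proposition reduces to a one-line induction followed by two divisibility remarks.
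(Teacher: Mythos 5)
Your proof is correct and follows essentially the same route as the paper's: both argue part 1 by induction on $n$ via the expansion $[g,_{n+1}x]=[g,_nx]^{-1}[g,_nx]^x$, with the hypothesis $x^2=1$ entering only through the inversion identity $[g,x]^x=[g,x]^{-1}$ (which you isolate explicitly and the paper invokes inline), and both treat parts 2 and 3 as immediate divisibility consequences of the resulting formula.
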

\begin{proof}
It is enough to show part 1. We argue by induction on $n$. It is clear, if $n=1$. We have
\begin{align*}
[g,_{n+1} x]&=[g,_nx]^{-1}[g,_nx]^x \\
&=\big([g,x]^{(-2)^{n-1}}\big)^{-1}\big([g,x]^{(-2)^{n-1}}\big)^x\\
&=[g,x]^{(-1)\cdot(-2)^{n-1}}\big([g,x]^x\big)^{(-2)^{n-1}} \\
&=[g,x]^{(-1)\cdot(-2)^{n-1}} [g,x]^{(-1)\cdot(-2)^{n-1}} \;\; {\rm since}\; x^2=1, \; [g,x]^{x}=[g,x]^{-1}\\
&=[g,x]^{(-2)^{n}}.
\end{align*}
This completes the proof.
\end{proof}
  The above phenomenon, something like part 2 of Proposition \ref{inv},   may not be true   for  elements of other prime orders.
  For, if $p\geq 4381$ is a prime,  then the free $2$-generated Burnside group $\mathcal{B}=B(2,p)$  of exponent $p$,  by a deep result of Adjan and Novikov \cite{AdjanNovikov68} is infinite and every abelian subgroup of $B$ is finite. Now, if a non-trivial element $a\in \mathcal{B}$  were in $L(\mathcal{B})$, then $A=\langle a\rangle^\mathcal{B}$ would be nilpotent by Theorem \ref{t:max-L}. If $A$ is infinite, then as it is nilpotent, it contains an infinite abelian subgroup which is not possible and if $A$ is finite, then $a$ has finitely many conjugates in $\mathcal{B}$ and in particular the centralizer $C_\mathcal{B}(a)$ is infinite, which is again impossible as the centralizer of every non-trivial element in $\mathcal{B}$ is finite by \cite{AdjanNovikov68}.

The following result  was announced by Bludov in \cite{Bludov2005}.
\begin{thm}[Bludov] {\rm \cite{Bludov2005}}
There exist groups in which a product of left Engel elements is not
necessarily a left Engel element.
\end{thm}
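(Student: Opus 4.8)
The goal is purely existential: I must produce a single group $G$ together with two elements $a,b\in L(G)$ whose product $ab$ lies outside $L(G)$; equivalently, I must exhibit some $g_0\in G$ for which the left-normed sequence $[g_0,_n ab]$ is different from $1$ for every $n$. The whole difficulty is that the three requirements ``$a\in L(G)$'', ``$b\in L(G)$'' and ``$ab\notin L(G)$'' pull in opposite directions, so the plan is to decouple them: certify the left-Engel property of $a$ and $b$ by a cheap \emph{structural} criterion that is insensitive to the fine dynamics, and concentrate all the wild behaviour in the single element $ab$.

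For the structural certificate I would use one of the sufficient conditions recorded above. The most economical is subnormality: if $\langle a\rangle$ and $\langle b\rangle$ are each subnormal in $G$, then $a,b\in B(G)\subseteq\overline{L}(G)\subseteq L(G)$, and this holds regardless of how $ab$ behaves. One is tempted instead to take $a,b$ to be involutions and invoke Proposition \ref{inv}.2, but this route is obstructed: forcing every weight-$2$ commutator containing $a$ or $b$ to be a $2$-element essentially forces $\langle a,b\rangle$ to be a $2$-group, whence $ab$ has $2$-power order and, acting on any elementary abelian $2$-section, the operator induced by $ab$ satisfies $(\,\cdot\,-1)^{2^{k}}=0$ over $\mathbb{F}_2$, so $ab$ turns out to be left Engel after all. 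This near-miss is instructive and explains why the construction must be more delicate, so I would build $G$ as a split extension $B\rtimes\langle a,b\rangle$ in which $B$ carries the dynamics and $\langle a\rangle,\langle b\rangle$ are arranged to be subnormal.

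The heart of the argument is to make $ab$ non-Engel. Since left-Engelness of $c:=ab$ is exactly the statement that the conjugation map $\gamma_c\colon g\mapsto[g,c]$ is \emph{pointwise nilpotent}, I would realise $c$ through a faithful action on $B$ (or on a rooted tree, or a permutation domain) under which $\gamma_c$ acts as a transformation that is \emph{not} pointwise nilpotent, while $\gamma_a$ and $\gamma_b$ remain individually well-behaved. The identity $[g,ab]=[g,b]\,[g,a]^{b}$ from the commutator calculus used in Theorem \ref{t:He} shows that $\gamma_{ab}$ is a twisted combination of $\gamma_a$ and $\gamma_b$, and the phenomenon to exploit is precisely that a combination of two pointwise-nilpotent maps need not be pointwise nilpotent. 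Concretely I would pick $g_0\in B$ and show, by an explicit recursion on the supports of the elements $[g_0,_n c]$ (a spreading or shift argument), that $[g_0,_n c]\neq 1$ for all $n$.

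The main obstacle is this last verification together with the simultaneous bookkeeping. Guaranteeing the subnormality, hence left-Engelness, of $\langle a\rangle$ and $\langle b\rangle$ imposes strong finiteness on the $a$- and $b$-dynamics, and one must check that these constraints still leave enough room for $\gamma_{ab}$ to possess an infinite commutator orbit; controlling the defects of subnormality while keeping $ab$ genuinely wild is where all the work lies, and it is the reason the example is non-trivial. Once a $g_0$ with $[g_0,_n ab]\neq 1$ for all $n$ is produced, the conclusion that $a,b\in L(G)$ while $ab\notin L(G)$, and hence that a product of left Engel elements need not be a left Engel element, is immediate.
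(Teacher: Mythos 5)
Your strategy fails at its very first step, before any construction begins. You propose to certify $a,b\in L(G)$ by making $\langle a\rangle$ and $\langle b\rangle$ subnormal in $G$, i.e.\ by placing $a$ and $b$ in the Baer radical $B(G)$. But $B(G)$ is a \emph{subgroup}: by Baer's theorem (cited in the paper), it is a normal locally nilpotent subgroup of $G$. Hence $a,b\in B(G)$ forces $ab\in B(G)\subseteq\overline{L}(G)\subseteq L(G)$, so under your own certificate the product $ab$ is automatically a (bounded) left Engel element, and no choice of the base group $B$ or of the action can change this. The same objection kills any variant in which both $a$ and $b$ are certified by membership in one of the radicals $Fitt(G)\leq B(G)\leq Gr(G)\leq HP(G)$: all of these are subgroups contained in $L(G)$, so they can never separate $a,b$ from $ab$. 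The whole point of Bludov's theorem is that $L(G)$ itself is \emph{not} such a subgroup, so the witnessing elements must be left Engel for a reason that does not place them in a common locally nilpotent normal subgroup.

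The second, independent, flaw is that the route you dismiss as a ``near-miss'' is exactly the one that works, and your dismissal rests on a false claim. You argue that if $a,b$ are involutions with $\langle a,b\rangle$ a $2$-group, then $ab$ acts unipotently on every elementary abelian $2$-section and is therefore left Engel ``after all''. Unipotence on each section does not globalize: in an infinite $2$-group there is no finite filtration by such sections along which the sequence $[g,_n ab]$ is forced to terminate, and infinite $2$-groups need not be Engel groups. Bludov's example, as described in the paper, is built precisely on Grigorchuk's infinite $2$-groups: there every involution $x$ is a left Engel element by Proposition \ref{inv} (each commutator $[g,x]$ is a $2$-element, so $[g,_n x]=[g,x]^{(-2)^{n-1}}$ vanishes for large $n$), the group is generated by involutions, and yet the group is not an Engel group, so some product of these left Engel generators is not left Engel. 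In other words, your claimed obstruction is equivalent to the false assertion that periodic $2$-groups are Engel, and it excludes the actual solution. Finally, even setting both issues aside, what you have written is a plan rather than a proof: no group $G$, no elements $a,b$, and no witness $g_0$ with $[g_0,_n ab]\neq 1$ for all $n$ are ever produced, and the ``heart of the argument'' is exactly the part left unexecuted.
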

This refutes part 1 of Question \ref{q1} for an arbitrary group, i.e., the set of left Engel elements is not in general a subgroup. He constructed a non Engel group generated by left Engel elements. In
particular, he shows  a non left Engel element which is a product of two left Engel elements.
His example is based on infinite $2$-groups  constructed by Grigorchuk \cite{Grigorchuk}. Note that part 3 of Question \ref{q1} is still open, i.e., we do not know whether the set of bounded left Engel elements is a subgroup or not. To end this section we prove the following result.
\begin{prop}
 At least one of the following  happens.
 \begin{enumerate}
 \item The free $2$-generated Burnside group of exponent $2^{48}$ is an $k$-Engel group for some integer $k$.
\item There exists  a group $G$ of exponent $2^{48}$ generated by four involutions which is not an Engel group.
 \end{enumerate}
 \end{prop}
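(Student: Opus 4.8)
The plan is to read the two alternatives as a single dichotomy coming from one universal group, and to reduce the whole statement to one embedding question. Write $B=\langle a,b\rangle$ for the free $2$-generated group of exponent $2^{48}$, and let $G$ be the universal group of exponent $2^{48}$ generated by four involutions, namely
$$G=\langle s_1,s_2,s_3,s_4 \mid s_i^2=1\ (1\le i\le 4),\ w^{2^{48}}=1\ \ \forall w\in\langle s_1,\dots,s_4\rangle\rangle.$$
By construction $G$ has exponent dividing $2^{48}$ and is generated by four involutions, and since $G'$ has exponent dividing $2^{48}$, each $s_i$ lies in $L_{49}(G)$ by Proposition \ref{inv}(3). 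Put $\alpha=s_1s_2$ and $\beta=s_3s_4$; these have order dividing $2^{48}$, so there is a canonical epimorphism $\varphi\colon B\to\langle\alpha,\beta\rangle\le G$ with $a\mapsto\alpha$ and $b\mapsto\beta$.

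The key elementary reduction I would use is that, since $B$ is relatively free in the variety of groups of exponent dividing $2^{48}$, the law $[x,_k y]=1$ holds throughout $B$ if and only if it holds for the free generators; that is, $B$ is $k$-Engel if and only if $[a,_k b]=1$ in $B$. Thus alternative (1) is exactly the assertion that the single sequence $c_k:=[a,_k b]\in B$ becomes trivial for some $k$. Now suppose (1) fails, so $c_k\ne 1$ in $B$ for every $k$. If I can guarantee that $\varphi$ is injective, equivalently $\langle s_1s_2,s_3s_4\rangle\cong B$, then $\varphi(c_k)=[s_1s_2,_k s_3s_4]\ne 1$ in $G$ for all $k$, so the single pair $(s_1s_2,\,s_3s_4)$ witnesses that $G$ is not an Engel group; as $G$ has exponent $2^{48}$ and is generated by four involutions, this is precisely alternative (2). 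If instead (1) holds, nothing need be proved. This route deliberately avoids the usual gap between ``Engel'' and ``bounded Engel'': because the same pair $(a,b)$ is used for every $k$, non-boundedness of the Engel degree of that pair is literally the failure of the Engel condition for it.

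The hard part is the injectivity of $\varphi$, i.e. showing that writing the two free Burnside generators as products of two involutions each imposes no new relations on $\langle a,b\rangle$ inside $G$. It suffices to embed $B$ into some group $K$ of exponent $2^{48}$ in which $a$ and $b$ are each a product of two involutions of $K$; then the assignment $s_i\mapsto$ (the corresponding involution of $K$) defines a homomorphism $G\to K$ whose composition with $\varphi$ is the inclusion $B\hookrightarrow K$, forcing $\varphi$ injective. The obvious candidate, the semidirect product $B\rtimes\langle\tau_a,\tau_b\rangle$ with $\tau_a,\tau_b$ the automorphisms $a\mapsto a^{-1},\,b\mapsto b$ and $a\mapsto a,\,b\mapsto b^{-1}$, realizes $a=\tau_a(\tau_a a)$ and $b=\tau_b(\tau_b b)$ as products of involutions but has exponent $2^{49}$, not $2^{48}$ (elements such as $ab\tau_a$ acquire order $2^{49}$), and simply imposing the law $w^{2^{48}}=1$ on it can collapse part of $B$. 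So the genuine obstacle is an \emph{exponent-preserving} construction: producing an exponent-$2^{48}$ overgroup — or, more weakly, for each $k$ an exponent-$2^{48}$ group with involutions in which the image of $c_k$ survives — in which $a,b$ appear as such dihedral products. I expect this step to carry the whole weight of the argument. The infiniteness of $B$ (Ivanov) guarantees that the $c_k$ are available as candidates for non-triviality, and Zelmanov's solution of the restricted Burnside problem shows that every finite quotient of $B$ is $c$-Engel for a fixed $c$; hence the entire difficulty is concentrated in whether the relevant $c_k$ can be kept alive in an exponent-$2^{48}$, involution-generated group, rather than being killed when the exponent is lowered from $2^{49}$ to $2^{48}$.
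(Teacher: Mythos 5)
Your strategy coincides with the paper's: a dichotomy driven by relative freeness, in which failure of alternative (1) means the pair of free generators $(a,b)$ of $B=B(2,2^{48})$ is a non-Engel pair, and one must then transport this pair into an exponent-$2^{48}$ group generated by four involutions by writing each of $a,b$ as a product of two involutions. Your reduction of (1) to the single sequence $c_k=[a,_k b]$ is correct, as is your observation that injectivity of $\varphi$ follows from the existence of \emph{any} exponent-$2^{48}$ group $K$ containing $B$ in which $a$ and $b$ are each a product of two involutions of $K$.

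However, you leave exactly that existence question open (``I expect this step to carry the whole weight of the argument''), so the proof is incomplete, and the missing ingredient is not something one can improvise from the semidirect-product idea you tried; it is Lemma 6 of Ivanov and Ol'shanskii \cite{IvanovOlshanskii97}. Let $n=2^{48}$ and let $B(X,n)$ be the free Burnside group of exponent $n$ on a countable set $X=\{x_1,x_2,\dots\}$. That lemma states that the elements $x_{2k-1}^{n/2}x_{2k}^{n/2}$, $k=1,2,\dots$, generate a subgroup of $B(X,n)$ isomorphic to $B(X,n)$ via $x_{2k-1}^{n/2}x_{2k}^{n/2}\mapsto x_k$. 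Taking $K=B(X,n)$, the involutions $t_i=x_i^{n/2}$ for $1\le i\le 4$, and the copy of $B$ given by $\langle t_1t_2,\,t_3t_4\rangle$ (relatively free on those two products), you obtain precisely the exponent-preserving dihedral decomposition you asked for, and your argument then closes: $\varphi$ is injective, and if (1) fails, the pair $(s_1s_2,s_3s_4)$ witnesses that $G$ is not an Engel group. The paper avoids your universal group $G$ altogether by working inside $B(X,n)$ and taking $\mathcal{G}=\langle x_1^{n/2},x_2^{n/2},x_3^{n/2},x_4^{n/2}\rangle$ as the four-involution witness; but since your $G$ surjects onto $\mathcal{G}$ and the Engel property passes to quotients, either group serves in alternative (2). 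Finally, note that Ivanov's infiniteness theorem \cite{Ivanov94} is not logically needed for the dichotomy (a finite $B$ would be a nilpotent $2$-group, making (1) trivially true); it only makes the conclusion non-vacuous, and in the paper it is invoked to record that in case (1) one gets an infinite finitely generated bounded-Engel group of exponent $n$.
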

\begin{proof}
Let $n= 2^{48}$ and $B(X,n)$ be the free Burnside  group on the set $X=\{x_i \;|\; i\in \mathbb{N}\}$ of the Burnside variety of exponent $n$ defined by the law
$x^n=1$. Lemma 6 of \cite{IvanovOlshanskii97} states that  the subgroup $\langle x_{2k-1}^{n/2}x_{2k}^{n/2}\;|\; k=1,2,\dots\rangle$ of $B(X,n)$ is isomorphic  to $B(X,n)$ under the map $x_{2k-1}^{n/2}x_{2k}^{n/2}\rightarrow x_k$, $k=1,2,\dots$. Therefore  the subgroup
$\mathcal{G}:=\langle x_1^{n/2},x_2^{n/2},x_3^{n/2},x_4^{n/2}\rangle$ is generated by four elements of order $2$, contains the subgroup $\mathcal{H}=\langle x_1^{n/2}x_2^{n/2},x_3^{n/2}x_4^{n/2} \rangle$ isomorphic to the free $2$-generator Burnside group $B(2,n)$ of exponent $n$.  It follows from Proposition \ref{inv} that the group $\mathcal{G}$ can be generated by four left $49$-Engel elements of $\mathcal{G}$. Thus $$\mathcal{G}=\langle L_{49}\big(\mathcal{G}\big)\rangle=\langle L\big(\mathcal{G}\big)\rangle=\langle \overline{L}\big(\mathcal{G}\big)\rangle.$$

Suppose, if possible, $\mathcal{G}$ is an Engel group. Then $\mathcal{H}$ is also an Engel group. Let $Z$ and $Y$ be two free generators of $\mathcal{H}$. Thus $[Z,_k Y]=1$ for some integer $k\geq 1$. Since $\mathcal{H}$ is the free 2-generator Burnside group of exponent $n$, we have that every group of exponent $n$ is a $k$-Engel group. Therefore, $\mathcal{G}$ is an infinite finitely generated $k$-Engel group of exponent $n$, as $\mathcal{H}$ is infinite by a celebrated result of Ivanov \cite{Ivanov94}. This completes the proof.
 \end{proof}
We believe that the group $\mathcal{H}$ cannot be an Engel group, but we are unable to prove it.
\subsubsection{Left Engel Elements in Generalized Soluble  and Linear Groups}
In this subsection we collect main results on Engel structure left Engel elements in generalized soluble  and linear groups.

The papers   \cite{Gruenberg59} and \cite{Gruenberg61} by Gruenberg are essential  to anyone who wants to know the Engel structure of soluble groups.
In particular, all four Engel subsets are subgroups in any soluble group.
\begin{thm}[Gruenberg]{\rm \cite[Proposition 3, Theorem 4]{Gruenberg59}} Let $G$ be a soluble group. Then  $L(G)=HP(G)$
and $\overline{L}(G)=B(G)$.
\end{thm}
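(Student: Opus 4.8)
The plan is to combine the two inclusions already proved above, namely $HP(G)\subseteq L(G)$ and $B(G)\subseteq\overline{L}(G)$, with their converses, which are where solubility is actually used. So it suffices to prove that for a soluble $G$ one has $L(G)\subseteq HP(G)$ and $\overline{L}(G)\subseteq B(G)$. I would prove both simultaneously by induction on the derived length $d$ of $G$, the case $d\leq 1$ (abelian $G$) being trivial: there every cyclic subgroup is normal and $L(G)=\overline{L}(G)=G=B(G)=HP(G)$.

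For the inductive step, let $A=G^{(d-1)}$, an abelian characteristic subgroup, so that $G/A$ is soluble of derived length $d-1$. Fix $a\in L(G)$ (resp. $a\in\overline{L}(G)$); its image $aA$ lies in $L(G/A)$ (resp. $\overline{L}(G/A)$), since the left Engel conditions pass to quotients. By induction $\langle aA\rangle=\langle a\rangle A/A$ is ascendant in $G/A$ (resp. subnormal of finite defect in $G/A$), and since ascendance and subnormality lift through the quotient map, $\langle a\rangle A$ is ascendant (resp. subnormal of finite defect) in $G$. Because both properties are transitive, the problem reduces to the metabelian group $M=\langle a\rangle A$: it remains to show that $\langle a\rangle$ is ascendant in $M$ when $a\in L(M)$, and subnormal of finite defect in $M$ when $a\in\overline{L}(M)$.

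The key observation is to read the Engel condition as a statement about the operator $a-1$ on the abelian group $A$ written additively: for $x\in A$ one has $[x,_n a]=x(a-1)^n$, so $a\in L$ says that $a-1$ is a locally nilpotent endomorphism of $A$ (each $x$ is killed by some power), while $a\in L_k$ says that $(a-1)^k=0$ on $A$. In the bounded case set $A_i=A(a-1)^i$, giving a finite $\langle a\rangle$-invariant chain $A=A_0\geq A_1\geq\cdots\geq A_k=1$; since each $A_i\leq A$ is normal in $M$ and $[A_i,a]\leq A_{i+1}$, the subgroups $\langle a\rangle A_i$ form a subnormal series from $M=\langle a\rangle A_0$ down to $\langle a\rangle=\langle a\rangle A_k$ of defect at most $k$, whence $\langle a\rangle$ is subnormal in $M$ and $a\in B(G)$. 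In the unbounded case set instead $K_i=\{x\in A:[x,_i a]=1\}=\ker(a-1)^i$; these form an ascending $\langle a\rangle$-invariant chain with $K_0=1$ and $\bigcup_i K_i=A$ (the union is all of $A$ precisely because $a$ is a left Engel element), and since $[K_{i+1},a]\leq K_i$ the subgroups $\langle a\rangle K_i$ form an ascending series from $\langle a\rangle$ up to $M$, so $\langle a\rangle$ is ascendant in $M$, giving $a\in Gr(G)\subseteq HP(G)$.

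I expect the main obstacle to be the bookkeeping in the unbounded case: one must check that the series inside $M$ genuinely has limit $M$, which forces the use of the kernel filtration $K_i$ rather than the image filtration $A_i$ and uses the Engel hypothesis exactly at the step $\bigcup_i K_i=A$, and one must concatenate ascendance correctly through the two-step tower $\langle a\rangle\,\mathrm{asc}\,M\,\mathrm{asc}\,G$ across the limit ordinal. The bounded case is cleaner, since every chain terminates after finitely many steps and the two defects simply add.
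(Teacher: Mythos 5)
The paper offers no proof of this theorem --- it is quoted directly from Gruenberg's 1959 paper --- so your proposal can only be measured against the classical argument, and in substance it \emph{is} the classical argument (Gruenberg's, reproduced for instance in Robinson's \emph{A Course in the Theory of Groups}, \S 12.3): induct on the derived length, pass to $G/A$ where $A$ is the last nontrivial term of the derived series, pull the resulting ascendance/subnormality back through $G\to G/A$, and settle the core case $M=\langle a\rangle A$ by viewing $A$ as a $\mathbb{Z}\langle a\rangle$-module, with the image filtration $A_i=A(a-1)^i$ handling the bounded case and the kernel filtration $K_i=\ker\big((a-1)^i\big)$ handling the unbounded one. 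The individual steps are sound: Engel conditions pass to quotients, ascendance and subnormality lift along the quotient map and are transitive, the subgroups $A_i$ and $K_i$ are normal in $M$, and $[A_i,a]\le A_{i+1}$ (resp.\ $[K_{i+1},a]\le K_i$) really does give $\langle a\rangle A_{i+1}\trianglelefteq \langle a\rangle A_i$ (resp.\ $\langle a\rangle K_i\trianglelefteq \langle a\rangle K_{i+1}$), because for $x\in A_i$ one has $a^x=a[a,x]=a[x,a]^{-1}\in a\,A_{i+1}$; and your insistence on the kernel filtration in the unbounded case, so that the union of the ascending series is genuinely $M$, is exactly the right point of care.

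The one thing you must repair is the announced induction hypothesis. You declare that it suffices to prove $L(G)\subseteq HP(G)$ and $\overline{L}(G)\subseteq B(G)$ and that you will induct on these statements, but in the inductive step you invoke ``by induction $\langle aA\rangle$ is ascendant in $G/A$''. Membership of $aA$ in $HP(G/A)$ does not by itself yield ascendance of the cyclic subgroup: that property is what defines the Gruenberg radical $Gr$, which in an arbitrary (even locally nilpotent) group can be a proper subgroup of the Hirsch--Plotkin radical --- and in soluble groups the coincidence $L=Gr=HP$ is part of what is being proved, so it cannot be assumed. The cure is already contained in your own closing line ``giving $a\in Gr(G)\subseteq HP(G)$'': take as the inductive claim the stronger statements $L(G)\subseteq Gr(G)$ and $\overline{L}(G)\subseteq B(G)$, i.e.\ that every (bounded) left Engel element generates an ascendant (subnormal of finite defect) cyclic subgroup. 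That is precisely what your inductive step establishes, so the induction closes, and the theorem follows from $Gr(G)\subseteq HP(G)\subseteq L(G)$ and $B(G)\subseteq\overline{L}(G)$, the last two inclusions being the ones proved earlier in the paper for arbitrary groups. With that restatement your proof is complete and correct.
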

A group is called radical if it has an ascending series with locally nilpotent factors. Define the upper Hirsch-Plotkin series of a group to be the ascending series $1=R_0\leq R_1\leq \cdots $ in which $R_{\alpha+1}/R_{\alpha}=HP\big(G/R_{\alpha}\big)$ and $R_\lambda=\bigcup_{\alpha<\lambda}R_{\alpha}$ for limit ordinals $\lambda$. It can be proved that radical groups are precisely those groups which coincide with a term of their upper Hirsch-Plotkin series.
\begin{thm}[Plotkin]{\rm \cite[Theorem 9]{Plotkin55}}\\
Let $G$ be a radical group. Then $L(G)=HP(G)$.
\end{thm}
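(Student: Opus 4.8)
Since the inclusion $HP(G)\subseteq L(G)$ holds in \emph{every} group by the proposition proved above, the plan is to establish only the reverse inclusion $L(G)\subseteq HP(G)$ for radical $G$. I would reformulate the goal so as to make quotients and normal subgroups available: prove by transfinite induction on radical length, ranging over the whole class of radical groups, that $L(H)=HP(H)$ for every radical group $H$. Concretely I would use the upper Hirsch--Plotkin series $1=R_0\leq R_1\leq\cdots$, note that $R_1=HP(H)$ and that $H=R_\beta$ for some $\beta$ by radicality, and run the induction on $\beta$. A fact I would invoke repeatedly is the elementary observation that if $N\trianglelefteq H$ then $HP(N)$, being characteristic in $N$ and hence a normal locally nilpotent subgroup of $H$, satisfies $HP(N)\leq HP(H)$.

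The base case (length $\leq 1$, so $H$ locally nilpotent) is trivial, and the limit case is easy: if $\beta$ is a limit ordinal and $a\in L(H)$, then $a\in R_\gamma$ for some $\gamma<\beta$, and since $a$ is still left Engel in the normal radical subgroup $R_\gamma$ of smaller length, the inductive hypothesis gives $a\in HP(R_\gamma)\leq HP(H)$. The entire content of the theorem therefore sits in the successor step $\beta=\delta+1$ with $\delta\geq 1$. Here I set $N=R_\delta$, so that $H/N$ is locally nilpotent while $N$ is radical of length $\leq\delta$ with $L(N)=HP(N)$ by induction. For $a\in L(H)$ I write $M=\langle a\rangle^H$; since $MN/N$ lies in the locally nilpotent group $H/N$, the quotient $M/(M\cap N)$ is locally nilpotent, and by the Hirsch--Plotkin theorem it suffices to prove $M$ itself locally nilpotent, i.e. that $Y=\langle a^{g_1},\dots,a^{g_k}\rangle$ is nilpotent for any finitely many conjugates of $a$. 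Modulo $N$ the group $Y$ is nilpotent, so $\gamma_{c+1}(Y)\leq M\cap N$ for some $c$, and everything reduces to controlling this bottom piece inside $N$.

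The hard part is exactly this last reduction, and the obstruction is real: one cannot simply feed the elements of $\gamma_{c+1}(Y)$ into the inductive hypothesis on $N$, because they are commutators of left Engel elements, and the class of left Engel elements is \emph{not} closed under commutation (Bludov's example is a warning against any such shortcut). To circumvent this I would avoid arguing with $\gamma_{c+1}(Y)$ directly and instead descend through a central series of the locally nilpotent bottom layer so as to reduce to the case in which $N$ is abelian. In that case the conjugation action makes $N$ a $\mathbb{Z}[\langle a\rangle]$-module on which the left Engel condition forces $a-1$ to act as a nil endomorphism on each finitely generated submodule; a commutator-collecting computation then yields the nilpotence of $Y$, and the Hirsch--Plotkin theorem reassembles the layers to give local nilpotence of $M$.

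I expect the genuine difficulty to lie precisely in transferring the Engel condition across the central factors of the non-abelian locally nilpotent layer, so that the abelian module argument can be applied section by section; an alternative packaging of the same core, which I would keep in reserve, is to prove instead that a left Engel element of a radical group generates an ascendant cyclic subgroup, placing $a$ in the Gruenberg radical $Gr(G)\leq HP(G)$. Either way, the successor step is where all the work concentrates, and the non-closure of $L(G)$ under commutators is the precise point that the radical structure must be used to overcome.
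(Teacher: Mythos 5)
Your scaffolding is sound, and the paper itself gives you nothing to be measured against: it states this theorem as a quotation from Plotkin's 1955 paper with no proof, so your attempt must stand on its own. On its own it is not yet a proof. The free inclusion $HP(G)\subseteq L(G)$, the transfinite induction on the length of an ascending locally nilpotent series, and the base and limit cases are all correct (the limit case works because $HP(R_\gamma)$ is characteristic in $R_\gamma$ and hence a normal locally nilpotent subgroup of $H$). But the successor step, which you yourself call ``the entire content of the theorem,'' is never carried out, and the one concrete mechanism you name for it would fail: you propose to ``descend through a central series of the locally nilpotent bottom layer so as to reduce to the case in which $N$ is abelian,'' yet in your decomposition $N=R_\delta$ is only radical, not locally nilpotent, and even a locally nilpotent group need not possess any nontrivial central series to descend through --- McLain's group is locally nilpotent, characteristically simple, and has trivial center, so its upper central series never leaves the identity. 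Replacing the radical layer by abelian sections in a way that lets the Engel condition pass to those sections is precisely the hard content of Plotkin's theorem (and of Gruenberg's in the soluble case); your sketch substitutes for it an acknowledgement that this is where ``the genuine difficulty'' lies. The alternative you hold ``in reserve'' --- show $\langle a\rangle$ is ascendant, so that $a\in Gr(G)\leq HP(G)$ --- is a restatement of the goal, not an argument.

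The abelian case you do sketch also has a gap, though a fillable one. Knowing that every element of $B=M\cap N$ is annihilated by some power of each operator $a^{g_i}-1$ does not by itself make $Y=\langle a^{g_1},\dots,a^{g_k}\rangle$ nilpotent: the exponents are unbounded, the operators $a^{g_i}-1$ need not commute (they commute only modulo the action of the nilpotent group $Q=Y/C_Y(\gamma_{c+1}(Y))$, which has class up to $c$), and a bare ``commutator-collecting computation'' produces no bound. The engine that actually runs this step in the literature is finiteness: $\gamma_{c+1}(Y)$ is a finitely generated module over $\mathbb{Z}Q$ with $Q$ finitely generated nilpotent, so $\mathbb{Z}Q$ is Noetherian by P. Hall's extension of the Hilbert basis theorem; Noetherianness upgrades element-wise nil action of each $a^{g_i}-1$ to uniform nilpotence, and only then can the individual actions be assembled, working down a central series of $Q$, into a finite $Y$-central series of $\gamma_{c+1}(Y)$. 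None of that machinery appears in your proposal, and also your module is taken over $\mathbb{Z}[\langle a\rangle]$ alone, which cannot see the joint action of the conjugates. So: right frame and an honest diagnosis, but both halves of the core --- the passage through non-abelian layers and the Noetherian module argument at the bottom --- are missing.
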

\begin{qu}[Robinson]{\rm \cite[p. 63 of Part II]{Robinson72-2}}
Let $G$ be a radical group. Is it true that $\overline{L}(G)=B(G)$?
\end{qu}
\begin{thm}[Gruenberg] {\rm \cite{Gruenberg66}} Let $R$ be a commutative Noetherian ring with identity and $G$ be a group of $R$-automorphisms of a finitely
generated $R$-module. Then $L(G)=HP(G)$ and $\overline{L}(G)=B(G)$.
\end{thm}
To state some results on ceratin soluble groups we need the following definitions.
Let $\mathfrak{A}_0$ be the class of abelian groups of finite torsion-free rank and finite
$p$-rank for every prime $p$; $\mathfrak{A}_1$ be the class of abelian groups $A$ of finite pr\"ufer rank  such that $A$ contains
only a finite number of elements of prime order; $\mathfrak{A}_2$ be the class of abelian groups which have a series of finite length
each of whose factors satisfies either the maximal or the minimal
condition for subgroups; and let $\mathfrak{S}_i$ be  the class of all poly $\mathfrak{A}_i$-groups.
The class of all $\mathfrak{S}_0$-groups in which
 the product of all  periodic normal
subgroups is finite will be denoted by $\mathfrak{S}_t$.
\begin{thm}[Gruenberg]{\rm \cite[Theorem 4]{Gruenberg66} and \cite[Theorem 1.3, Proposition 1.1, Lemma 2.2, Proposition 6.1]{Gruenberg61}}
 If $G$ is an $\mathfrak{S}_0$-group, then
 \begin{enumerate}
\item $HP(G)=Gr(G)=L(G)$ is hypercentral;
\item $\overline{\zeta}(G)=\varrho(G)=R(G)=\zeta_{\alpha}(G)$, where  $\alpha\leq n\omega$ for some
positive integer $n$;
\item  $Fitt(G)$ need not be nilpotent.
\end{enumerate}
If $G$ is  an $\mathfrak{S}_1$-group, then
\begin{enumerate}
\item $Fitt(G)=B(G)=L(G)$ is nilpotent;
\item $\zeta_\omega(G)=\overline{\varrho}(G)=R(G)$;
 \item $HP(G)$ need not be nilpotent and $\overline{\zeta}(G)$ may not equal to $\zeta_n(G)$ for some positive integer $n$,
even if $G$ satisfies the minimal condition — and therefore is an $\mathfrak{S}_2$-group.
\end{enumerate}
If G is  an $\mathfrak{S}_t$-group, then
 $HP(G)=Fitt(G)$ and $\overline{\zeta}(G)=\zeta_k(G)$ for some positive integer $k$.
\end{thm}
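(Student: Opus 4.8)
The plan is to treat this as a synthesis of three ingredients already available: the soluble-group equalities $L(G)=HP(G)$ and $\overline{L}(G)=B(G)$ recalled above, the universal radical chain $Fitt(G)\leq B(G)\leq Gr(G)\leq HP(G)$, and the universal inclusions $\zeta_\omega(G)\leq\overline{\varrho}(G)\leq B(G)$, $\overline{\zeta}(G)\leq\varrho(G)\leq Gr(G)$ together with Heineken's relation (Theorem~\ref{t:He}). Since every $\mathfrak{S}_i$-group is soluble, the left-hand assertions reduce immediately to showing that the radical chain collapses to a single subgroup and that this subgroup is hypercentral (for $\mathfrak{S}_0$) or nilpotent (for $\mathfrak{S}_1$); the right-hand assertions reduce to squeezing $R(G)$ between $\overline{\zeta}(G)$ and the already-placed subgroups $\varrho(G)$, $\overline{\varrho}(G)$ while controlling the hypercentral length.

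First I would set up an induction on the length $n$ of a poly-$\mathfrak{A}_i$ series $1=G_0\trianglelefteq G_1\trianglelefteq\cdots\trianglelefteq G_n=G$ with each $G_{j+1}/G_j\in\mathfrak{A}_i$. The base case $n=1$ is an abelian $\mathfrak{A}_i$-group, where every element is central, so all four radicals and all upper-central invariants coincide with $G$ and the length is at most $\omega$. For the inductive step I would pass to $G/A$ with $A=G_1\in\mathfrak{A}_i$ and analyse the action of $G$ on $A$. The decisive point is that the defining finiteness properties of $\mathfrak{A}_i$ are inherited by subgroups and quotients: for $\mathfrak{A}_0$ finite torsion-free rank together with finite $p$-rank for every $p$, and for $\mathfrak{A}_1$ finite Pr\"ufer rank together with only finitely many elements of each prime order. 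These rank bounds force a locally nilpotent normal subgroup to be hypercentral in the $\mathfrak{S}_0$-case and genuinely nilpotent in the $\mathfrak{S}_1$-case, thereby collapsing $HP(G)=Gr(G)$ (resp. $Fitt(G)=B(G)=HP(G)$). For the $\mathfrak{S}_t$-refinement the extra hypothesis---finiteness of the product of all periodic normal subgroups---removes exactly the torsion that otherwise separates $HP(G)$ from $Fitt(G)$, so $HP(G)=Fitt(G)$ follows, and the same torsion control bounds the hypercentral length by a finite $\zeta_k(G)$.

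For the right Engel side I would use Theorem~\ref{t:He} to pass from right $n$-Engel elements to left $(n+1)$-Engel elements and, dually to the left-Engel argument, show that every right Engel element is absorbed into the upper central series one poly-factor at a time. Each abelian $\mathfrak{A}_0$-factor, being of finite torsion-free and $p$-rank, can raise the hypercentral length by at most $\omega$, which yields the explicit bound $\alpha\leq n\omega$ with $n$ the poly-$\mathfrak{A}_0$ length and simultaneously gives $\overline{\zeta}(G)=\varrho(G)=R(G)=\zeta_\alpha(G)$; in the $\mathfrak{S}_1$-case the stronger rank condition compresses this to $\zeta_\omega(G)=\overline{\varrho}(G)=R(G)$. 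The negative clauses (part~3 in each list) are not theorems to be proved but obstructions to be exhibited, so I would construct explicit split extensions: an $\mathfrak{S}_0$-example with non-nilpotent $Fitt(G)$, and an $\mathfrak{S}_1$-example satisfying the minimal condition in which $HP(G)$ is non-nilpotent and $\overline{\zeta}(G)\neq\zeta_n(G)$ for every finite $n$, typically built on Pr\"ufer $p$-groups acted on by a suitable automorphism.

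The hard part will be upgrading ``locally nilpotent'' to ``hypercentral'' and ``nilpotent'' and extracting the precise ordinal bound $\alpha\leq n\omega$: this is where the rank arithmetic of the classes $\mathfrak{A}_0$ and $\mathfrak{A}_1$ does the real work, and where a naive induction on poly-length fails because the central series of the whole group need not refine the poly series. Controlling how torsion accumulates across the factors---and in the $\mathfrak{S}_t$-case showing it stays finite---is the technical core, and it is exactly here that I would expect to lean most heavily on Gruenberg's rank estimates rather than on the soft inclusions available from the general theory.
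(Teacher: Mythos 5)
First, a point of reference: the paper itself contains no proof of this statement --- it is a survey item quoted directly from \cite{Gruenberg66} and \cite{Gruenberg61} --- so there is no argument of the paper to compare yours against; your proposal has to be judged against what such a proof actually requires. For the left-Engel half your skeleton is reasonable: since $\mathfrak{S}_i$-groups are soluble, Gruenberg's soluble-group theorem gives $L(G)=HP(G)$ and $\overline{L}(G)=B(G)$ for free, and the chain $Fitt(G)\leq B(G)\leq Gr(G)\leq HP(G)$ collapses as soon as one knows that $HP(G)$ is hypercentral (in the $\mathfrak{S}_0$ case) or nilpotent (in the $\mathfrak{S}_1$ case). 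But that key lemma --- that the rank conditions force a locally nilpotent normal subgroup to be hypercentral, respectively nilpotent --- is precisely the substance of the cited results of Gruenberg, and your proposal simply asserts it (``these rank bounds force\dots''), explicitly deferring to ``Gruenberg's rank estimates''. That is circular: the theorem to be proved is invoked as the engine of its own proof. A genuine proof must develop the structure theory of abelian groups of finite torsion-free and $p$-rank and of the automorphism groups acting on them; nothing in your sketch supplies this.

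The more serious gap is on the right-Engel side. Heineken's relation (Theorem~\ref{t:He}) only converts right Engel elements into left Engel ones, so combined with the soluble theory it yields $R(G)\subseteq L(G)=HP(G)$ --- which is nowhere near the hypercenter. The assertions $R(G)=\overline{\zeta}(G)=\varrho(G)=\zeta_\alpha(G)$ with $\alpha\leq n\omega$, and $R(G)=\overline{\varrho}(G)=\zeta_\omega(G)$ in the $\mathfrak{S}_1$ case, are of a different order of difficulty, and they are \emph{false} for soluble groups in general: the paper itself recalls that the wreath product of a cyclic group of prime order $p$ with an elementary abelian $p$-group of infinite rank is soluble, has every element right $(p+1)$-Engel, and yet has trivial hypercenter. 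Consequently your plan to argue ``dually to the left-Engel argument, absorbing right Engel elements into the upper central series one poly-factor at a time'' cannot be a soft dualization; the finite-rank hypotheses must intervene essentially at every stage (this is where Gruenberg's module-theoretic analysis enters), and your sketch provides no mechanism for that. The same objection applies to the ordinal bound: the claim that each $\mathfrak{A}_0$-factor ``can raise the hypercentral length by at most $\omega$'' is exactly what needs proof, not a step one may assume. Until these two points are supplied, what you have is a plausible outline of \emph{where} the difficulty lives, not a proof.
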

\begin{thm}[Wehrfritz]{\rm \cite[Theorem E2]{Wehrfritz70}}
Let $G$ be a group of automorphisms of the finitely generated
$\mathfrak{S}_0$-group $A$. Then
\begin{enumerate}
\item[(i)] $HP(G)=Gr(G)=L(G)$ is hypercentral;
\item[(ii)] $\overline{\zeta}(G)=\varrho(G)=R(G)=\zeta_{\alpha}(G)$, where $\alpha\leq n\omega$ for some
positive integer $n$;
\item[(iii)] $HP(G)=B(G)=\overline{L}(G)$ is nilpotent;
\item[(iv)] $\zeta_\omega(G)=\overline{\varrho}(G)=\overline{R}(G)$.\\
 If in addition $A$ is an $\mathfrak{S}_t$-group, then
\item[(v)] $HP(G)=Fitt(G)$ and $\overline{\zeta}(G)=\zeta_k(G)$ for some positive integer $k$.
\end{enumerate}
\end{thm}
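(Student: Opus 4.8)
The plan is to represent $G$ on the finite-rank abelian sections of $A$, to control the kernels of these representations by a stability argument, and thereby to reduce every assertion to the abelian (linear) case, where one can invoke the known Engel theory of linear groups together with Gruenberg's theorem on $\mathfrak{S}_0$-groups quoted above.

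First I would fix a finite $G$-invariant series $1 = A_0 < A_1 < \cdots < A_r = A$ whose factors $A_{i+1}/A_i$ all lie in $\mathfrak{A}_0$; such a series exists because $A$ is a finitely generated poly-$\mathfrak{A}_0$-group and its terms may be taken characteristic, hence invariant under $G \le \mathrm{Aut}(A)$. The action on the factors gives a homomorphism $\theta\colon G \to \prod_{i}\mathrm{Aut}\bigl(A_{i+1}/A_i\bigr)$ whose kernel $K$ is the stability group of the series. By P.~Hall's stability theorem $K$ is nilpotent, and since $K \trianglelefteq G$ we get $K \le Fitt(G)$, so $K$ is absorbed into every radical appearing in (i)--(v). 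Thus it suffices to analyse the image of $G$ acting on a single factor $B := A_{i+1}/A_i \in \mathfrak{A}_0$.

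Next I would settle the abelian case. Because $B$ has finite torsion-free rank and finite $p$-rank for each prime, $\mathrm{Aut}(B)$ embeds, up to a further nilpotent stability layer (the part acting unipotently on the torsion-free quotient $B\otimes_{\mathbb{Z}}\mathbb{Q}$ and on the $p$-components), into a finite product of linear groups over $\mathbb{Q}$ and over $p$-adic fields. Here the Engel structure is understood: a left Engel element acts so that $g-1$ is nilpotent on each simple constituent of the representation, whence $\langle L(G)\rangle$ is triangularizable and therefore hypercentral, with height on each factor bounded by $\omega$. Tracking the bounded Engel degree through the same eigenvalue analysis gives the nilpotency of $\overline{L}(G) = B(G) = Fitt(G)$ on a factor and the equalities $\zeta_\omega(G) = \overline{\varrho}(G) = \overline{R}(G)$ and $\overline{\zeta}(G) = \varrho(G) = R(G)$; the identifications $HP(G) = Gr(G)$ and the coincidence with $\zeta_\alpha(G)$ then follow by matching the central data with Gruenberg's theorem, since the image is (virtually) an $\mathfrak{S}_0$-group.

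Finally I would assemble the factors by climbing the series, at each step combining the nilpotent stability kernel (Hall) with the abelian conclusions, so that the six radicals of $G$ are squeezed between their values computed factor-by-factor; as there are only finitely many factors, an ordinal height $\le n\omega$ survives in (ii) and a finite central height $k$ survives in (v), with the extra $\mathfrak{S}_t$ hypothesis forcing the periodic part to be finite and thereby collapsing the $n\omega$ bound. The main obstacle will be the abelian/linear step: proving that the left Engel elements of an automorphism group of an $\mathfrak{A}_0$-group form a hypercentral subgroup of height controlled by $\omega$, while \emph{simultaneously} pinning down $R$, $\overline{R}$, $\varrho$ and $\overline{\varrho}$ with the sharp ordinal bounds, demands delicate unipotency arguments over several completions at once and careful bookkeeping to keep all the identifications mutually consistent as one passes up the series.
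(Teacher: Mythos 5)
The paper never proves this statement: it is quoted verbatim from Wehrfritz \cite[Theorem E2]{Wehrfritz70} as part of the survey, so your proposal has to stand entirely on its own, and as written it does not. The first serious gap is the reduction step. From P.~Hall's stability theorem you correctly get that the kernel $K$ of $\theta\colon G\to\prod_i \mathrm{Aut}(A_{i+1}/A_i)$ is nilpotent, but the assertion that $K$ is then ``absorbed into every radical appearing in (i)--(v), thus it suffices to analyse the image'' is precisely the point that needs proof, and it is false as a general principle. Engel sets and the radicals $HP$, $B$, $Gr$, $\overline{\zeta}$, $\zeta_\omega$, $\varrho$, $\overline{\varrho}$ do not pass through extensions automatically: knowing $L(\theta(G))=HP(\theta(G))$ and $K$ nilpotent only places $\langle L(G)\rangle$ inside a (nilpotent)-by-(locally nilpotent) group, which need not be locally nilpotent; likewise the hypercenter of $G$ is not the preimage of the hypercenter of $G/K$, so none of the equalities in (i)--(v) descend or lift for free. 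Making this step rigorous (via Hall--Plotkin type nilpotency criteria and careful element-wise arguments) is where most of the work in Wehrfritz's paper actually lies.

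The second gap is the ``abelian/linear case'' itself. You propose to finish by applying Gruenberg's theorem on $\mathfrak{S}_0$-groups to the image of $G$ on a factor $B\in\mathfrak{A}_0$, on the grounds that this image is ``(virtually) an $\mathfrak{S}_0$-group.'' That is false: already for $A=B=\mathbb{Z}^n$ the image can be all of $GL_n(\mathbb{Z})$, which is not virtually soluble, so the soluble-group theorem is simply unavailable. Gruenberg's linear theorem (groups of $R$-automorphisms of a \emph{finitely generated} $R$-module over a commutative Noetherian ring) does not apply directly either, because although $A$ is finitely generated as a group, the abelian factors $B$ need not be finitely generated $\mathbb{Z}$-modules --- e.g.\ the factor $\mathbb{Z}[1/2]$ of the finitely generated $\mathfrak{S}_0$-group $BS(1,2)=\mathbb{Z}[1/2]\rtimes\mathbb{Z}$. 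Bridging exactly this mismatch (for instance by exploiting that finitely generated soluble groups of finite abelian section rank are minimax, so only finitely many primes occur, and then embedding the action into linear groups over $\mathbb{Q}$ and over the $p$-adic integers for those finitely many $p$) is the genuine content of \cite{Wehrfritz70}; your sketch assumes it rather than proves it. The same hand-waving affects the ordinal bookkeeping: the bounds $\alpha\le n\omega$ in (ii) and $\overline{\zeta}(G)=\zeta_k(G)$ in (v) require the quantitative form of the linear results, not just their qualitative statements. (By contrast, your claim that the series of $A$ can be chosen characteristic is a minor, fixable point: refine the derived series, noting that abelian sections of $\mathfrak{S}_0$-groups lie in $\mathfrak{A}_0$.)
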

\begin{thm}[Gruenberg] {\rm \cite{Gruenberg66}} Let $R$ be a commutative Noetherian ring with identity and $G$ be a group of $R$-automorphisms of a finitely
generated $R$-module. Then $L(G)=HP(G)$ and $\overline{L}(G)=B(G)$.
\end{thm}
Let us finish this section with some results on the structure of generalized linear groups given by Wehrfritz. Let $R$
denote a commutative ring with identity and $M$ an $R$-module. Let $G$ be a group of
finitary  automorphisms of $M$ over $R$; that is,
\begin{align*} G &\leq FAut_R M=\{g \in  Aut_RM \;:\; M(g-1) \;\text{is}\; R-\text{Noetherian}\}\\
&\leq Aut_RM.
\end{align*}
\begin{thm}[Wehrfritz]{\rm \cite[4.4]{Wehrfritz2000}}
Let $G$ be a group of finitary automorphisms of  a module  over
a commutative ring  with identity. Then $L(G)=HP(G)=Gr(G)$ and $\overline{L}(G)=B(G)$.
\end{thm}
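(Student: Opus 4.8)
The plan is to separate the two equalities into the parts that hold in every group and the parts that genuinely use the finitary hypothesis. By the propositions already proved in this section we always have $HP(G)\subseteq L(G)$ and $B(G)\subseteq\overline{L}(G)$, and the general chain $Fitt(G)\le B(G)\le Gr(G)\le HP(G)$ gives $Gr(G)\le HP(G)$. Hence everything reduces to the two reverse inclusions
$$L(G)\subseteq Gr(G)\qquad\text{and}\qquad\overline{L}(G)\subseteq B(G),$$
since the first then yields $Gr(G)\le HP(G)\subseteq L(G)\subseteq Gr(G)$ and forces all three to coincide. Unwinding the definitions of $Gr(G)$ and $B(G)$, I must show that a left Engel element generates an \emph{ascendant} cyclic subgroup of $G$, and that a \emph{bounded} left Engel element generates a \emph{subnormal} cyclic subgroup.

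The engine for both is the module filtration supplied by finitariness. For $a\in G$ put $M_i=\ker\big((a-1)^i\big)$, so that $M_0=0$, $M_1=C_M(a)$, and $a$ acts trivially on each factor $M_{i+1}/M_i$; thus $a$ sits inside the stability group of this ascending series. Since $M/M_1\cong M(a-1)$ is $R$-Noetherian and $a-1$ embeds $M_{i+1}/M_i$ into $M_i/M_{i-1}$, the ascending chain $M_1\le M_2\le\cdots$ has image in $M/M_1$ that is Noetherian, so it stabilises after finitely many steps. On the submodule $U=\bigcup_i M_i$ the element $a$ therefore lies in the stability group of a \emph{finite} series, and the classical stability theorem of Kaloujnine--P.\,Hall makes the corresponding subgroup nilpotent of bounded class. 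The left Engel relation $[g,_n a]=1$ is exactly what is needed to push the relevant conjugates of $a$ into unipotent action on these Noetherian factors, so that this finite-length analysis applies uniformly.

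With the local picture in place I would globalise as follows. For $a\in L(G)$, assemble the Noetherian supports $\{\,M(a^g-1):g\in G\,\}$ into a $G$-invariant ascending series of submodules $0=V_0\le V_1\le\cdots\le V_\sigma=M$ on whose factors $\langle a\rangle^{G}$ acts unipotently; the hypercentrality of the stability group of an ascending series (P.\,Hall) then produces an ascending normal series terminating at $\langle a\rangle$, whence $\langle a\rangle$ is ascendant and $a\in Gr(G)$. In the bounded case a left $k$-Engel element forces every filtration above to have length bounded solely in terms of $k$, so the stability series is \emph{finite} of bounded length; Kaloujnine's theorem then gives subnormality of bounded defect and $a\in B(G)$.

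The hard part will be the globalisation step, not the local module computation. A finitary automorphism fixes everything outside a Noetherian submodule, yet the supports $M(a^g-1)$ are \emph{not} $G$-invariant, so the real difficulty is to organise them into a genuinely $G$-invariant ascending series of submodules while retaining Noetherian control on every factor, and then to run the transfinite induction so that the layer-by-layer stability-group nilpotency accumulates into ascendance of $\langle a\rangle$ in the whole of $G$. Keeping the ordinal length of this series finite exactly when $a$ is a bounded Engel element---so that ascendance upgrades to bounded subnormality---is the crux; here the earlier theorems of Gruenberg and of Wehrfritz for automorphism groups of finitely generated modules over Noetherian rings serve as the template for the finite-length layers.
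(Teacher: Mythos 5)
The first thing to note is that the paper offers no proof of this statement: it is quoted from Wehrfritz \cite[4.4]{Wehrfritz2000} as a known result, so your attempt can only be measured against what a correct proof must contain. Your reduction step is sound: combining the general inclusions $HP(G)\subseteq L(G)$ and $B(G)\subseteq \overline{L}(G)$ (both proved earlier in this section of the paper) with the general chain $Fitt(G)\leq B(G)\leq Gr(G)\leq HP(G)$ does reduce the theorem to the two inclusions $L(G)\subseteq Gr(G)$ and $\overline{L}(G)\subseteq B(G)$.

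The genuine gap is in the engine you propose for those two inclusions. The filtration $M_i=\ker\big((a-1)^i\big)$ only sees the part of $M$ on which $a$ is unipotent: the union $U=\bigcup_i M_i$ does stabilise after finitely many steps (your Noetherian argument for that is correct), but there is no reason for $U$ to equal $M$, and $a-1$ acts injectively on $M/U$. Your key claim that the left Engel relation ``pushes the relevant conjugates of $a$ into unipotent action'' is false: being left Engel is a condition relative to $G$, not to the module. For instance, if $G$ is any abelian group of semisimple (say diagonalisable) finitary automorphisms, then every element of $G$ is left Engel, since $G$ is abelian --- and indeed $G=HP(G)=Gr(G)$, so the theorem holds trivially --- yet no nontrivial element acts unipotently, and your filtration stops at $M_1=C_M(a)$. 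So no stability-group argument applied to $\ker\big((a-1)^i\big)$ can establish $L(G)\subseteq Gr(G)$; any correct proof must handle Engel elements with nontrivial semisimple part, which is precisely why Wehrfritz's argument proceeds through the structure theory of finitary automorphism groups (reduction to Noetherian submodules and to the earlier Gruenberg--Wehrfritz theorems for automorphism groups of finitely generated modules over Noetherian rings) rather than through unipotence. Finally, the globalisation step --- which you yourself flag as the crux --- is only described, never carried out: asserting that the supports $M(a^g-1)$ can be organised into a $G$-invariant ascending series with Noetherian control on the factors, and that boundedness of the Engel length bounds the length of that series, is essentially a restatement of the theorem, not a proof of it. As it stands the proposal is a plan whose central lemma is false and whose remaining steps are declared rather than proved.
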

Wehrfritz has also studied the Engel structure of finitary skew linear groups \cite{Wehrfritz92}.
 \subsubsection{Left Engel Elements in Groups Satisfying Certain Min or Max Conditions}
The famous structure result for left Engel elements  is due to Baer \cite[p.257]{Baer57}, where he proved that  a left Engel element defined by right-normed commutators of a group satisfying maximal condition on subgroups belongs to the Hirsch-Plotkin radical. Therefore in groups satisfying maximal conditions, the set of left Engel elements defined by right-normed commutators is a subgroup and so it coincides with the one defined by left-normed commutators. Hence, Baer's result is also valid for left Engel element defined by left-normed commutators.
\begin{thm}[Plotkin]{\rm \cite{Plotkin58}}\label{Plotkin58}\label{t:max-L}
Let $G$ be a group which satisfies the maximal condition on its abelian subgroups. Then $L(G)=\overline{L}(G)=HP(G)$ which is nilpotent.
\end{thm}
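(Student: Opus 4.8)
The plan is to split the two displayed equalities into two independent tasks: (A) proving that $HP(G)$ is nilpotent, and (B) proving the inclusion $L(G)\subseteq HP(G)$. Everything else is then formal. Indeed, the Proposition above already gives $HP(G)\subseteq L(G)$; and once (A) is known, say $HP(G)$ is nilpotent of class $c$, then for $g\in HP(G)$ and any $x\in G$ the subgroup $\langle g^{-x},g\rangle$ lies in $HP(G)$ and so is nilpotent of class at most $c$, whence Heineken's identity (Theorem \ref{t:He}(1)) yields $[x,_{c+1}g]=1$ uniformly. This gives $HP(G)\subseteq L_{c+1}(G)\subseteq\overline{L}(G)\subseteq L(G)$, and combining with (B) forces $HP(G)=\overline{L}(G)=L(G)$, with nilpotency supplied by (A).

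For (A) I would argue that $HP(G)$ is locally nilpotent by the Hirsch--Plotkin theorem, and that as a subgroup of $G$ it inherits the maximal condition on abelian subgroups. I would then invoke the classical fact (Mal'cev) that a locally nilpotent group satisfying the maximal condition on its abelian subgroups already satisfies the full maximal condition; such a group is finitely generated, and a finitely generated locally nilpotent group is nilpotent. Hence $HP(G)$ is nilpotent.

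For (B), fix $a\in L(G)$ and set $N=\langle a\rangle^{G}$; it suffices to prove $N$ is nilpotent, for then $N$ is a normal locally nilpotent subgroup and so $a\in N\subseteq HP(G)$. Note that $N$ inherits the maximal condition on abelian subgroups and is generated by the conjugates $a^{g}$, each of which is again left Engel. I would choose a maximal normal abelian subgroup $A$ of $N$, which by max-ab is finitely generated, and aim to show $C_{N}(A)=A$; granting this, $N/A$ embeds in $\mathrm{Aut}(A)$. Each generator $a^{g}$ satisfies $[u,_{n}a^{g}]=1$ for every $u\in A$ and some $n$, and since $A$ is finitely generated this says $a^{g}$ acts unipotently on $A$, i.e.\ as a unipotent automorphism of $A\otimes\mathbb{Q}$ and on each torsion component. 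Since $N$ is generated by these unipotent elements, Kolchin's theorem shows the image of $N$ in $\mathrm{Aut}(A)$ is unipotent, hence nilpotent, and moreover $[A,_{m}N]=1$ for some $m$; thus $N/A\cong N/C_{N}(A)$ is nilpotent of some class $c$ and $\gamma_{c+1+m}(N)\le[A,_{m}N]=1$, so $N$ is nilpotent. It is worth stressing that $N$ being merely \emph{generated by} left Engel elements would not by itself force nilpotency (cf.\ Bludov's example above); the argument survives only because the linear, unipotent structure on $A$ lets Kolchin's theorem do the work that an abstract Engel property of $N$ cannot.

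The step I expect to be the genuine obstacle is the self-centralizing property $C_{N}(A)=A$. The easy half is that $Z(C_{N}(A))$ is characteristic in the normal subgroup $C_{N}(A)$, hence normal and abelian in $N$ and containing $A$, so equals $A$ by maximality; upgrading this to $C_{N}(A)=A$ is exactly where the Engel hypothesis and the maximal condition on abelian subgroups must be combined, and it carries the real content of Baer's and Plotkin's arguments. A secondary technical nuisance is the passage from the rational (Kolchin) statement to one over $\mathbb{Z}$: one must separately control the finite torsion subgroup of $A$ and verify that the unipotent action there contributes only finitely many further central steps. Neither difficulty affects the overall skeleton, but both demand care.
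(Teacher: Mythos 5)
Your opening reduction is fine, and so is step (A): $HP(G)$ is locally nilpotent, inherits the maximal condition on abelian subgroups, hence satisfies the maximal condition on nilpotent subgroups by \cite[Theorem 3.31]{Robinson72-2}, and a locally nilpotent group with that property is nilpotent. The theorem, however, lives entirely in your step (B), and there the argument breaks at two points. The decisive one is the Kolchin step: Kolchin's theorem applies to a linear group \emph{all of whose elements} are unipotent, not to a group merely \emph{generated by} unipotent elements. The image of $N=\langle a\rangle^G$ in $\mathrm{Aut}(A)$ is generated by the unipotent images of the conjugates $a^g$, but a product of unipotent automorphisms need not be unipotent: $SL_n(\mathbb{Z})\leq \mathrm{Aut}(\mathbb{Z}^n)$ is generated by elementary (unipotent) matrices and is nowhere near nilpotent. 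Nor can you claim that every element of $N$ acts unipotently, since elements of $N$ other than conjugates of $a$ need not be left Engel --- the failure of $L(G)$ to be closed under products is exactly Bludov's example \cite{Bludov2005}, which you yourself invoke. The second problem is the step $C_N(A)=A$, which you leave as the ``genuine obstacle'': a maximal normal abelian subgroup of an arbitrary group need not be self-centralizing (in a nonabelian simple group it is trivial), and self-centralization of such an $A$ is essentially a consequence of the nilpotency of $N$ that you are trying to prove, so the argument is circular precisely where the content sits. With both of these unrepaired, (B) is not established.

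The paper's route is quite different and avoids linearization altogether. Its engine is Plotkin's normalizer lemma (Theorem \ref{thm:Plotkin58Lemma1}): if a nilpotent subgroup $H$ generated by elements of $H\cap L(G)$ is not normal in $G$, then some conjugate of an element of $H\cap L(G)$ lies in $N_G(H)\setminus H$. Iterating this gives, for $g\in L(G)$, an ascending chain $\langle g\rangle=H_1\leq H_2\leq\cdots$ of nilpotent subgroups, each $H_i=\langle H_{i-1},g^{h_i}\rangle$ normal in $H_{i+1}$, which can only stabilize at a normal subgroup of $G$ (Theorem \ref{t:Plotkin58}). Under the maximal condition on abelian subgroups the chain must terminate --- again by \cite[Theorem 3.31]{Robinson72-2} --- so $g$ lies in a normal nilpotent subgroup of $G$, i.e.\ $g\in Fitt(G)\subseteq HP(G)$; the reverse inclusion and the bounded Engel length then follow as in your skeleton via Theorem \ref{t:He}. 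The point of contrast is that Plotkin exploits the Engel condition group-theoretically, growing a nilpotent subgroup one conjugate of $g$ at a time, rather than trying to make the whole normal closure $\langle a\rangle^G$ act unipotently on an abelian section --- which, as the $SL_n(\mathbb{Z})$ example shows, cannot be forced from unipotence of generators alone.
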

Theorem \ref{t:max-L} follows from the following key result due to Plotkin \cite{Plotkin58}.
\begin{thm}[Plotkin] {\rm \cite[Lemma 2]{Plotkin58}}\label{t:Plotkin58}
Let $G$ be an arbitrary group and $g\in L(G)$. Then there exists a sequence of subgroups $$H_1\leq H_2\leq\cdots\leq H_n\leq\cdots$$ in $G$ satisfying the following conditions:
\begin{enumerate}
\item $H_i$ is nilpotent for all integers $i\geq 1$,
\item $H_1=\langle g\rangle$ and for each $i\geq 2$, $H_i=\langle H_{i-1},g^{h_i}\rangle$ for some $h_i\in G$,
\item $H_i$ is normal in $H_{i+1}$ for all $i\geq 1$.
\item there is an integer $n\geq 1$ such that $H_{n+1}=H_n$  if and only if $H_n$ is a normal subgroup of $G$.
\end{enumerate}
\end{thm}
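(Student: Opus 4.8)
The plan is to build the chain by induction, enlarging it by a single conjugate of $g$ at each step and arranging the construction so that it stabilises exactly when a term becomes normal in $G$. For the base case set $H_1=\langle g\rangle$, which is cyclic and hence nilpotent and finitely generated. Suppose $H_1\le\cdots\le H_i$ have been produced with each $H_j$ finitely generated nilpotent, $H_{j-1}\trianglelefteq H_j$, and $H_j=\langle H_{j-1},g^{h_j}\rangle$. If $H_i\trianglelefteq G$, then $g\in H_i$ forces $g^h\in H_i$ for every $h\in G$, so $\langle H_i,g^h\rangle=H_i$ for all $h$; here I set $H_{i+1}=H_i$ and the chain terminates. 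If $H_i\not\trianglelefteq G$, the goal is to find $h_{i+1}\in G$ with $g^{h_{i+1}}\in N_G(H_i)\setminus H_i$ and to put $H_{i+1}=\langle H_i,g^{h_{i+1}}\rangle$.

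Granting such a choice, write $a=g^{h_{i+1}}$. Since $a$ normalizes $H_i$ we get $H_i\trianglelefteq H_{i+1}$, and $a\notin H_i$ makes the inclusion proper, so conditions (2) and (3) hold. For condition (1), note that $H_i$ is finitely generated nilpotent, hence polycyclic, and $H_{i+1}/H_i$ is cyclic, so $H_{i+1}$ is polycyclic and in particular satisfies the maximal condition on subgroups. As $L(G)$ is invariant under inner automorphisms, $a\in L(G)$, hence $a\in L(H_{i+1})$; by Baer's theorem that in a group with the maximal condition on subgroups every left Engel element lies in the Hirsch--Plotkin radical, $a\in HP(H_{i+1})$. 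On the other hand $H_i$ is a normal nilpotent, hence locally nilpotent, subgroup of $H_{i+1}$, so $H_i\le HP(H_{i+1})$ as well. Therefore $H_{i+1}=\langle H_i,a\rangle\le HP(H_{i+1})$, that is $H_{i+1}=HP(H_{i+1})$, which is nilpotent because a locally nilpotent group with the maximal condition is nilpotent. For condition (4), the implication $H_n\trianglelefteq G\Rightarrow H_{n+1}=H_n$ was recorded in the base step; its converse, that $H_{n+1}=H_n$ forces $H_n\trianglelefteq G$, amounts precisely to being able to extend properly whenever $H_n$ is not normal, which is the content of the final step below.

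It remains, when $H_i\not\trianglelefteq G$, to produce a conjugate of $g$ in $N_G(H_i)\setminus H_i$, and this is the step I expect to be the main obstacle. Because $H_i$ is generated by finitely many conjugates of $g$, it contains every conjugate of $g$ only if $H_i=\langle g^G\rangle$ is normal in $G$; hence there is a conjugate $a=g^t\notin H_i$. If $a$ already normalizes $H_i$ we are done, so the difficulty is the case $a\notin N_G(H_i)$. Here the left Engel hypothesis must enter: for every $h\in H_i$ one has $[h,_{n}a]=1$ for some $n$, so $a$ acts nilpotently on $H_i$ by conjugation, and the plan is to combine this with the nilpotency of the finitely generated group $H_i$ to replace $a$ by a conjugate of $g$ that does normalize $H_i$ while remaining outside it --- for instance by analysing the normal closure $\langle a^{H_i}\rangle$ in $\langle H_i,a\rangle$ together with the terminating commutator sequences $[h,_{j}a]$. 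Checking that this really yields a conjugate of $g$ lying in $N_G(H_i)\setminus H_i$ is the delicate point; once it is in hand, conditions (1)--(4) follow as above and the induction closes.
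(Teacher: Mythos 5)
Your construction has the right skeleton, and most of the bookkeeping is sound: the base step, the observation that a normal $H_i$ absorbs every conjugate of $g$ (one direction of condition 4), and the nilpotency argument for $H_{i+1}=\langle H_i,a\rangle$ --- polycyclicity gives the maximal condition, $a\in L(G)\cap H_{i+1}\subseteq L(H_{i+1})$ lies in $HP(H_{i+1})$ by Baer's theorem \cite{Baer57}, while $H_i\leq HP(H_{i+1})$ because it is normal and nilpotent, so $H_{i+1}=HP(H_{i+1})$ is nilpotent. That route through Baer's theorem is legitimate (Baer's result for groups with the maximal condition predates Plotkin's lemma and is independent of it), and it fills in a detail the paper leaves implicit. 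However, the step you defer to the end --- when $H_i$ is not normal in $G$, produce a conjugate of $g$ lying in $N_G(H_i)\setminus H_i$ --- is not a technical loose end: it is the entire mathematical content of the theorem, and you explicitly leave it unproved. In the paper this step is isolated as a separate result, Theorem \ref{thm:Plotkin58Lemma1} (Plotkin's Lemma 1): if $H$ is nilpotent, $H=\langle H\cap L(G)\rangle$ and $H$ is not normal in $G$, then $N_G(H)\setminus H$ contains a conjugate of an element of $H\cap L(G)$. The paper derives Theorem \ref{t:Plotkin58} from that lemma and gives only a sketch of the lemma in the finite cyclic case. So your proposal is in effect a reduction of the theorem to precisely the lemma the paper invokes, with no proof of that lemma; until it is supplied, the ``only if'' half of condition 4 is unsupported and the chain could stall at a non-normal term. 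That is a genuine gap.

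Moreover, your plan for closing the gap points in a direction that cannot work as stated. You propose to use the Engel condition of the outside conjugate $a$ ``acting nilpotently on $H_i$ by conjugation'' and to analyse $\langle a^{H_i}\rangle$; but in exactly the case you must handle, $a\notin N_G(H_i)$, so conjugation by $a$ is not an action on $H_i$ at all --- the identities $[h,_n a]=1$ live in $G$, not in an automorphism of $H_i$. The actual mechanism, visible in the paper's sketch, runs the Engel condition the other way, using the Engel property of $g$ against a witness of non-normality: choose $x$ with $g^x\notin H$, so $[x,g]\notin H$ while $[x,_n g]=1$ for some $n$; take $k$ maximal with $[x,_k g]\notin H$; then $[x,_{k+1}g]=g^{-[x,_k g]}g\in H$ forces $g^{[x,_k g]}\in H$, whence $[x,_k g]$ normalizes $H$ (finiteness of $H$ is used here). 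Note that the normalizing element produced this way is a left-normed commutator $[x,_k g]$, not visibly a conjugate of $g$; reconciling this with condition 2 of the statement, which demands that the adjoined generator be a conjugate of $g$, is the genuinely delicate part of Plotkin's argument, and neither your proposal nor the paper's cyclic-case sketch settles it in general.
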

Theorem \ref{t:Plotkin58} follows from the following important result of \cite[Lemma 1]{Plotkin58}. Note that if a group satisfies maximal condition on its abelian subgroups, then by \cite[Theorem 3.31]{Robinson72-2} it also satisfies maximal condition on its nilpotent subgroups.
\begin{thm}[Plotkin]{\rm \cite[Lemma 1]{Plotkin58}}\label{thm:Plotkin58Lemma1}
Let $H$ be a nilpotent subgroup of any group $G$ such that  $H=\langle H\cap L(G)\rangle$. If $H$ is not normal in $G$, then there is an element $x\in N_G(H)\setminus H$ which is conjugate to some element of $H\cap L(G)$.
\end{thm}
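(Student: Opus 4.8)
The plan is to produce the required $x$ as a conjugate of one of the generators of $H$, using the Engel condition to ``walk'' such a conjugate from outside the normalizer into it. Write $N=N_G(H)$. Since $H=\langle H\cap L(G)\rangle$ is not normal, some generator $a\in H\cap L(G)$ fails to be normalized by some $g$, i.e.\ $a^g\notin H$ (replacing $g$ by $g^{-1}$ if necessary); then automatically $g\notin N$, and every conjugate $a^t$ again lies in $L(G)$ because the Engel subsets are invariant under conjugation. As $a\in L(G)$ we have $[g,_m a]=1$ for some $m$, so the finite sequence $e_i:=[g,_i a]$ runs from $e_0=g$ to $e_m=1$. The computational engine is the elementary identity
\[
e_{i-1}^{\,a}=e_{i-1}e_i,\qquad\text{equivalently}\qquad a^{\,e_{i-1}}=a\,e_i^{-1}\quad(i\ge 1),
\]
which shows that the conjugate $a^{e_{i-1}}$ of the Engel generator $a$ differs from $a\in H$ only by $e_i^{-1}$; in particular $a^{e_{i-1}}\in N\setminus H$ precisely when $e_i\in N\setminus H$.

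Next I would record two monotonicity facts about the chain. Since $a\in H$ and $H$ is a subgroup, once a term lies in $H$ so do all later terms; and since $a\in H\le N$, once a term lies in $N$ so do all later terms. Let $q$ be the first index with $e_q\in N$ and $p$ the first index with $e_p\in H$. Then $1\le q\le p\le m$, because $g\notin N$ while $1=e_m\in H\le N$. If $q<p$, then $e_q\in N\setminus H$, and by the identity above $x:=a^{e_{q-1}}=a\,e_q^{-1}$ is a conjugate of $a\in H\cap L(G)$ lying in $N\setminus H$: this is exactly the element we want and the argument is complete.

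The whole difficulty is therefore concentrated in the degenerate case $q=p$, where the chain jumps directly from outside $N$ into $H$, so that $w:=e_{p-1}$ satisfies $w\notin N$ yet $a^w=a\,e_p^{-1}\in H$. This is the step I expect to be the main obstacle, since such a $w$ really can exist (it conjugates the \emph{single} generator $a$ into $H$ without normalizing $H$), so one cannot hope to finish from an arbitrary starting pair $(a,g)$. My plan to defeat it is to combine an extremal choice of $(a,g)$ with induction on the nilpotency class $c$ of $H$: here $w\notin N$ forces some generator $c\in H\cap L(G)$ (after possibly replacing $w$ by $w^{-1}$) with $c^{w}\notin H$, which feeds a fresh chain, and the point is to arrange the choices so that the jump case strictly simplifies the configuration and hence cannot recur indefinitely. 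For the inductive step I would pass to $\overline{M}=M/\gamma_c(H)$ with $M=N_G(\gamma_c(H))\ge H$, noting that $\gamma_c(H)$ is central and characteristic in $H$, hence normal in $M$; then $\overline{H}$ has class $c-1$, is generated by the images of the Engel generators, and a preimage of the element produced at level $c-1$ lands in $N_M(H)\le N_G(H)$ outside $H$.

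The delicate point in this last step — and the one I would spend the most care on — is bookkeeping the Engel condition through the quotient: an element that is Engel modulo $\gamma_c(H)$ need not be Engel in $G$, so the induction must be run not on abstract Engel elements of $\overline{M}$ but on conjugates of the images of the \emph{fixed} generators $a\in H\cap L(G)$, whose Engel property in $G$ survives both conjugation and projection and so still controls the chains after lifting. The base case $c=1$ ($H$ abelian) I would treat directly by the dichotomy above, checking that the jump case cannot be the only possibility there; making this termination and base-case analysis rigorous is the real content of the proof, whereas the reduction and the favourable case $q<p$ are routine.
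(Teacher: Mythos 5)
What you have is correct exactly as far as the paper itself goes, and no further. The paper does not reproduce Plotkin's full proof: it only ``tastes'' it in the case where $H$ is \emph{finite} cyclic, generated by one $g\in L(G)$, and its computation there is precisely your engine $a^{e_{i-1}}=a\,e_i^{-1}$ applied to the chain $e_i=[x,_i\,g]$. Your favourable case $q<p$ is a correct (indeed slightly sharper) rendering of that computation, since it directly produces a conjugate $a^{e_{q-1}}$ of the generator in $N_G(H)\setminus H$, which is what the theorem actually asserts. Note how the paper disposes of your jump case in its special setting: there $a^w\in H$ gives $H^w\leq H$, and \emph{finiteness} of $H$ upgrades this to $H^w=H$, i.e.\ $w\in N_G(H)$, contradicting $w\notin N$. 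So in the finite cyclic case $q<p$ always holds and your argument closes.

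The genuine gap is that everything you offer in place of that finiteness step is a programme, not a proof, and it has identifiable holes. First, the obstruction you isolate is real and cannot be dismissed locally: in $G=\langle a,t\mid a^t=a^2\rangle$ one has $a\in L_2(G)$ (its normal closure is abelian), $H=\langle a\rangle$ is non-normal, and $w=t$ satisfies $w\notin N_G(H)$ yet $a^w=a^2\in H$, with $H^w$ a \emph{proper} subgroup of $H$; running your chain on the pair $(a,t)$ jumps immediately, since $[t,a]=a^{-1}\in H$ while $t\notin N_G(H)$. (Here the theorem is rescued by the other pair $(a,t^{-1})$, which is favourable --- but that is a lucky choice, not an argument.) Second, your induction on the class $c$ via $\overline{M}=N_G(\gamma_c(H))/\gamma_c(H)$ has content only when $\overline{H}$ fails to be normal in $\overline{M}$; if $\overline{H}$ is normal in $\overline{M}$ --- which you never exclude --- the inductive hypothesis says nothing, you merely learn $N_G(H)=N_G(\gamma_c(H))$, and you are facing the original problem again, so the induction is circular on that branch. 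Third, neither the termination of the recursion of jump cases (``arrange the choices so that the jump case strictly simplifies'') nor the base case is actually argued, and the example above is metabelian with $H$ abelian of class $1$, so the base case already contains the whole difficulty. The missing idea is precisely the one the survey delegates to Plotkin's cited Lemma 1: a mechanism by which nilpotency of $H$ (used structurally, not merely as an induction counter) defeats configurations with $H^w<H$ and forces a well-chosen chain to enter $N_G(H)$ strictly before entering $H$.
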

To taste a little of the proof of Theorem \ref{thm:Plotkin58Lemma1}, let us  treat the case in which $H$ is  finite cyclic  generated by $g\in L(G)$. Since $H$ is not normal, there is an element $x\in G$ such that $g^x\not\in H$. Thus $[x,g]\not\in H$ and since $g\in L(G)$, there exists an integer $n\geq 2$ such that $[x,_n g]=1$. It follows that there is a positive integer $k$ such that $[x,_k g]\not\in H$ but $[x,_{k+1} g]\in H$. Since $[x,_{k+1} g]=g^{-[x,_k g]}g\in H$ and $g\in H$, we have that $g^{[x,_k g]}\in H$. This implies that $H^{[x,_k g]}\leq H$ and since $H$ is finite, $[x,_k g]\in N_G(H)$.
\begin{co}\label{co:lfl}
Let $G$ be a locally finite group. Then $L(G)=HP(G)$.
\end{co}
\begin{proof}
Let $x\in L(G)$ and $g_1,\dots,g_n \in G$. Then $H=\langle x^{g_1},\dots,x^{g_n}\rangle$ is a finite group generated by left Engel elements and so by Theorem \ref{Plotkin58}, $H$ is  nilpotent. This implies that $\langle x \rangle^G$ is locally nilpotent and so $x\in HP(G)$, as required.
\end{proof}
A group is said to satisfy Max locally whenever every finitely generated subgroup  satisfies the maximal condition on its subgroups.
\begin{thm}[Plotkin] {\rm \cite{Plotkin58}}
Let $G$ be a group having an ascending series whose factors satisfy Max locally. Then $L(G)=HP(G)$.
\end{thm}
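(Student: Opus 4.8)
The inclusion $HP(G)\subseteq L(G)$ is already available from the first Proposition of this subsection, so the entire content is the reverse inclusion $L(G)\subseteq HP(G)$. Since $HP(G)$ is exactly the set of elements whose normal closure is locally nilpotent, for a fixed $x\in L(G)$ it suffices to prove that $\langle x\rangle^{G}$ is locally nilpotent, i.e. that every finitely generated $H\leq\langle x\rangle^{G}$ is nilpotent. Any such $H$ lies in $\langle x^{g_1},\dots,x^{g_m}\rangle$ for finitely many conjugates, and each $x^{g_i}$ is again in $L(G)$ because $L(G)$ is invariant under automorphisms; so I may assume $H=\langle x^{g_1},\dots,x^{g_m}\rangle$ is generated by left Engel elements of $G$. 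Two closure facts will be used throughout: the class of groups admitting an ascending series with Max-locally factors is subgroup-closed (intersecting the series with a subgroup yields such a series, as Max locally passes to subgroups), and homomorphic images of left Engel elements are left Engel, since $[\bar g,_n\bar a]=\overline{[g,_n a]}$.

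The plan is a transfinite induction up the given series $1=G_0\trianglelefteq G_1\trianglelefteq\cdots\trianglelefteq G_\alpha=G$, establishing $L(G)\cap G_\beta\subseteq HP(G)$ for every $\beta$. Limit ordinals are immediate, since an element of $L(G)\cap\bigcup_{\gamma<\lambda}G_\gamma$ already lies in some $G_\gamma$. For the successor step I set $N=G_\beta$, assume the statement for $N$, and reduce a typical $H=\langle x^{g_1},\dots,x^{g_m}\rangle$ modulo $N$. The image $HN/N$ is a finitely generated subgroup of the Max-locally factor $G_{\beta+1}/N$, hence satisfies the maximal condition on subgroups and a fortiori on abelian subgroups; being generated by the images of the $x^{g_i}$, which are left Engel in $G/N$, Theorem \ref{t:max-L} gives $HN/N=\langle L(HN/N)\rangle=HP(HN/N)$, which is nilpotent. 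Thus every finitely generated subgroup generated by conjugates of $x$ is nilpotent modulo $N$, i.e. $\langle x\rangle^{G}$ is locally nilpotent modulo $M:=\langle x\rangle^{G}\cap N$; this is the routine half of the argument, and it mirrors the passage already seen in Corollary \ref{co:lfl}.

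The genuinely hard step is to upgrade ``$\langle x\rangle^{G}/M$ locally nilpotent, $M\leq N$'' into local nilpotency of $\langle x\rangle^{G}$ itself, and this is the step I expect to be the main obstacle. One cannot simply invoke closure of local nilpotency under extensions, which fails; worse, the induction hypothesis is about $L(G)\cap N$, whereas the elements of $M=\langle x\rangle^{G}\cap N$ are products of conjugates of $x$ and need not themselves be left Engel, so the hypothesis does not apply to $M$ directly. The route I would take is to strengthen the inductive assertion to one about ascendant subgroups, aiming to show that $\langle x\rangle$ is ascendant in $G$, whence $x\in Gr(G)\subseteq HP(G)$. Starting from $\langle x\rangle$, Theorem \ref{t:Plotkin58} produces a chain $\langle x\rangle=H_1\trianglelefteq H_2\trianglelefteq\cdots$ of nilpotent subgroups, each normal in the next, which already realizes $\langle x\rangle$ as ascendant in $\bigcup_i H_i$, and which by its property~(4) reaches a normal subgroup of $G$ precisely when it stabilizes; iterating Plotkin's Lemma \ref{thm:Plotkin58Lemma1} then continues the ascending construction transfinitely. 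The series hypothesis enters exactly here: on each finitely generated slice lying in a single factor $G_{\beta+1}/N$ the maximal condition forces the corresponding local chain to terminate, so the construction climbs the series $\{G_\beta\}$ one factor at a time and ultimately exhibits $\langle x\rangle$ as ascendant in $G$. Proving that this transfinite ascent actually terminates at $G$, controlled against the series, is where essentially all the work lies.
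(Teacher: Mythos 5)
The paper itself offers no proof of this theorem --- it is quoted from Plotkin \cite{Plotkin58} as a survey item --- so your proposal must stand on its own merits, and on those merits it is not yet a proof. Your routine reductions are sound: the subgroup-closure of the hypothesis, the reduction of local nilpotency of $\langle x\rangle^{G}$ to nilpotency of subgroups $\langle x^{g_1},\dots,x^{g_m}\rangle$, and the use of Theorem \ref{t:max-L} inside a single Max-locally factor. But there is a structural flaw already in the ``routine half'': you form the quotient $G/N$ for $N=G_\beta$, call the images of the $x^{g_i}$ ``left Engel in $G/N$'', and place $HN/N$ inside $G_{\beta+1}/N$, where $H$ is generated by conjugates of $x$ by \emph{arbitrary} elements of $G$. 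All of this presumes $G_\beta\trianglelefteq G$ and that $G$-conjugates of elements of $G_{\beta+1}$ stay in $G_{\beta+1}$. An ascending series only requires each term to be normal in its successor, i.e.\ the terms are merely ascendant in $G$; so $G/N$ need not be a group and $x^{g_i}$ need not lie in $G_{\beta+1}$. As written, your induction only treats ascending \emph{normal} series, a strictly weaker hypothesis. (One can partially repair this by working inside $G_{\beta+1}$ and aiming at $L(G)\cap G_{\beta}\subseteq HP(G_\beta)$, using that the Hirsch--Plotkin radical contains ascendant locally nilpotent subgroups, but that is a repair you did not make.)

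The decisive gap, however, is the one you yourself flag: the passage from ``$\langle x\rangle^{G}/M$ is locally nilpotent with $M=\langle x\rangle^{G}\cap N$'' to ``$\langle x\rangle^{G}$ is locally nilpotent'' is never carried out, and you correctly note that the inductive hypothesis cannot be applied to $M$, since its elements are products of conjugates of $x$ and need not be Engel. What you offer in its place is a sketch --- grow an ascendant chain from $\langle x\rangle$ via Theorem \ref{t:Plotkin58} and Lemma \ref{thm:Plotkin58Lemma1} and let it climb the series --- but that sketch has its own unresolved obstruction: Lemma \ref{thm:Plotkin58Lemma1} applies only to \emph{nilpotent} subgroups $H$ with $H=\langle H\cap L(G)\rangle$, whereas at the first limit stage of your transfinite ascent the union $\bigcup_i H_i$ is merely locally nilpotent, so the lemma cannot be re-applied; and property (4) of Theorem \ref{t:Plotkin58} only detects stabilization of the countable chain (note that Max locally does not force chains of finitely generated subgroups to terminate), giving no mechanism for passing limit ordinals. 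Your closing admission that proving the ascent terminates ``is where essentially all the work lies'' is accurate: that work is precisely the content of Plotkin's theorem, so what you have is a correct identification of the difficulty together with the easy reductions, not a proof of the statement.
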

\begin{thm}[Held] {\rm \cite{Held66}}\label{thm:Held66}
Let $G$ be a group satisfying minimal condition on its abelian subgroups. Then $\overline{L}(G)=Fitt(G)$.
\end{thm}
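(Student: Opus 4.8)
The plan is to prove the two inclusions $Fitt(G)\subseteq\overline{L}(G)$ and $\overline{L}(G)\subseteq Fitt(G)$ separately. The first holds in every group and is essentially already available: if $x\in Fitt(G)$ then, by the quoted result of Fitting, $\langle x\rangle^G$ is nilpotent, so $\langle x\rangle$ is subnormal in the normal subgroup $\langle x\rangle^G$ and hence subnormal in $G$; thus $x\in B(G)$, and the earlier Proposition $B(G)\subseteq\overline{L}(G)$ finishes it (alternatively one invokes $Fitt(G)\le B(G)$ directly). So the whole content is the reverse inclusion, and the reduction I would use first is this: by the same result of Fitting, $x\in Fitt(G)$ if and only if $\langle x\rangle^G$ is nilpotent, since a nilpotent normal closure is a normal nilpotent subgroup and hence lies in $Fitt(G)$. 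Therefore it suffices to show that for every $a\in L_k(G)$ the normal closure $N=\langle a\rangle^G$ is nilpotent; once this is done for each individual $a$, the set equality (and with it the fact that $\overline{L}(G)$ is a subgroup coinciding with $Fitt(G)$) follows formally.

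Before attacking $N$ I would record the two structural consequences of the minimal condition on abelian subgroups that drive everything. First, an infinite cyclic group fails this condition, so every element of $G$ has finite order and $G$ is periodic. Second, every abelian subgroup of $G$ is a Chernikov group, and consequently every nilpotent subgroup of $G$ is Chernikov (a nilpotent group inherits the minimal condition on all subgroups from its abelian subgroups), so in particular it satisfies the minimal condition. I would then localise: since $N$ is generated by the conjugates $a^g$, each of which again lies in $L_k(G)$ because these sets are invariant under inner automorphisms, and since nilpotency of class $\le c$ is a local property, it is enough to show that every finitely generated subgroup $\langle a^{g_1},\dots,a^{g_r}\rangle$ is nilpotent and, crucially, of class bounded by a function of $k$ alone. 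The uniform bound $k$ — rather than a $g$-dependent Engel length — is exactly what will let a bound obtained on the finite pieces propagate to the infinite group $N$.

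For the finitely generated pieces I would run Plotkin's construction. Since $a\in L_k(G)\subseteq L(G)$, Theorem \ref{t:Plotkin58} produces an ascending chain $H_1\le H_2\le\cdots$ of nilpotent subgroups with $H_1=\langle a\rangle$, each $H_{i+1}=\langle H_i,a^{h_{i+1}}\rangle$ generated by conjugates of $a$ (so $H_i=\langle H_i\cap L(G)\rangle$), $H_i\trianglelefteq H_{i+1}$, and with the feature that the chain halts exactly when it reaches a subgroup normal in $G$. By the previous paragraph each $H_i$ is Chernikov, so this is an ascending chain of Chernikov subgroups; I would use the minimal condition on abelian subgroups together with the uniform Engel bound $k$ (fed through standard commutator-collection arguments built on the identity of Theorem \ref{t:He}, which controls the nilpotency classes $\mathrm{cl}(H_i)$) to force the chain to stabilise. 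A stabilised $H_n$ is nilpotent, normal in $G$, and contains $a$, hence contains $N=\langle a\rangle^G$, so $N$ is nilpotent and $a\in Fitt(G)$. This is the same mechanism by which Theorem \ref{t:max-L} is obtained under the maximal condition, the difference being that there the chain terminates by the maximal condition on nilpotent subgroups, whereas here it must be the boundedness $k$ (capping the class) together with the minimal condition that does the work.

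The hard part will be precisely the stabilisation of Plotkin's chain under the minimal rather than the maximal condition: the chain genuinely ascends while the minimal condition runs the opposite way, so the boundedness of the Engel length has to be converted into either a uniform bound on the classes $\mathrm{cl}(H_i)$ or a uniform bound on the Chernikov ranks of the $H_i$, after which the minimal condition can be brought to bear. Concretely, the danger to be excluded is a normal closure that behaves like an infinite simple (Tarski-monster-type) group or a locally dihedral section; the locally dihedral $2$-group shows the phenomenon sharply, for its inverting involution lies in $L(G)$ but not in $\overline{L}(G)$ and has non-nilpotent normal closure, so any valid argument must use the uniform bound $k$ essentially and cannot rest on the unbounded Engel condition. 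Showing that a bounded left Engel element cannot generate such a pathological normal closure in a periodic group with the minimal condition on abelian subgroups — equivalently, that $N$ is locally finite with bounded Chernikov data — is where the genuine structure theory of these groups (Chernikov theory, and Corollary \ref{co:lfl} in the locally finite reduction) has to be invoked, and I expect it to be the crux of the proof.
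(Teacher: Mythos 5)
The paper does not actually prove this theorem --- it is quoted from Held's article \cite{Held66} without proof --- so your attempt can only be judged on its own merits, and on those merits it contains a genuine gap. The parts you do carry out are correct: $Fitt(G)\leq B(G)\subseteq \overline{L}(G)$ settles one inclusion; Fitting's quoted result correctly reduces the other inclusion to showing that $\langle a\rangle^G$ is nilpotent for each $a\in L_k(G)$; the minimal condition on abelian subgroups does force $G$ to be periodic and every nilpotent (indeed every abelian) subgroup to be Chernikov; and the locally dihedral $2$-group is exactly the right example showing that any proof must use the bound $k$ essentially, since its inverting involution lies in $L(G)\setminus\overline{L}(G)$ and has non-nilpotent normal closure. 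But everything after this reduction is a description of what a proof would have to accomplish, not a proof: the stabilisation of Plotkin's chain from Theorem \ref{t:Plotkin58} --- which you yourself call ``the crux'' --- is precisely the entire content of Held's theorem, and you leave it to ``standard commutator-collection arguments'' plus ``genuine structure theory \dots to be invoked.'' A proposal that ends by naming its own missing central step is a plan, not an argument.

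Moreover, the mechanism you gesture at cannot be completed in the form you state it. You want every subgroup $\langle a^{g_1},\dots,a^{g_r}\rangle$ to be nilpotent of class bounded by a function of $k$ alone, with the bound extracted from commutator identities such as that of Theorem \ref{t:He}. Any bound obtained purely from such identities would hold in every group, and the paper's own discussion preceding Question \ref{qu:lnhp} rules this out: by the Ivanov-based argument there, for suitable $n$ there exist groups $K$ with $L_n(K)\nsubseteq HP(K)$, whereas a uniform class bound $f(n)$ on subgroups generated by finitely many conjugates of $a\in L_n(K)$ would make $\langle a\rangle^K$ nilpotent and hence give $L_n(K)\subseteq Fitt(K)\leq HP(K)$ universally. (Even the far weaker question of bounding the class of a nilpotent group generated by $d$ left $3$-Engel elements is recorded as open in the paper.) So the minimal condition must enter the local, finitely generated analysis itself, and you give no indication how; as you note, Min does nothing to halt an ascending chain. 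The route a correct proof would likely take --- first showing $N=\langle a\rangle^G$ is locally finite (not automatic: Tarski monsters satisfy the minimal condition on abelian subgroups, so $G$ need not be locally finite), then that $N$ is Chernikov, then locally nilpotent via Corollary \ref{co:lfl}, then nilpotent because a locally nilpotent Chernikov group is nilpotent --- is exactly the sequence of steps your proposal points at but never executes.
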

As far as we know the following result is still unpublished.
\begin{thm}[Martin]{\rm \cite[p. 56 of Part II]{Robinson72-2}}
Let $G$ be a group  satisfying minimal condition on its abelian subgroups. Then $L(G)=HP(G)$.
\end{thm}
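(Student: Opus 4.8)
The inclusion $HP(G)\subseteq L(G)$ is exactly the Proposition proved above, so the entire task is the reverse inclusion $L(G)\subseteq HP(G)$. Since $HP(G)$ is the unique maximal normal locally nilpotent subgroup, an element $g$ lies in $HP(G)$ if and only if its normal closure $\langle g\rangle^G$ is locally nilpotent, and this in turn holds if and only if every subgroup $K=\langle g^{x_1},\dots,g^{x_m}\rangle$ generated by finitely many conjugates of $g$ is nilpotent. So the plan is to fix $g\in L(G)$ and prove that every such $K$ is nilpotent. Two reductions organise the work. First, the hypothesis forces $G$ to be periodic, since an element of infinite order would generate an infinite cyclic, hence abelian, subgroup violating the minimal condition; consequently every nilpotent subgroup of $G$ is a periodic nilpotent group, therefore locally finite, and satisfying Min on its abelian subgroups it is a Chernikov group. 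Second, a finitely generated Chernikov group is finite, so proving ``$K$ is nilpotent'' amounts to proving ``$K$ is finite''; and once $K$ is finite it is a finite group generated by left Engel elements (each $g^{x_i}\in L(G)$ since $L(G)$ is invariant under inner automorphisms), whence by Corollary \ref{co:lfl} applied inside $K$ we get $K=HP(K)$, which is nilpotent. Thus the whole problem collapses to showing that these finitely generated subgroups $K$ are finite.

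To extract finiteness I would invoke Plotkin's machinery. Beginning with the finite cyclic subgroup $\langle g\rangle$ and repeatedly applying Theorem \ref{thm:Plotkin58Lemma1}, I would build the ascending chain $H_1\leq H_2\leq\cdots$ of Theorem \ref{t:Plotkin58}: each $H_i$ is nilpotent (hence Chernikov), $H_i\trianglelefteq H_{i+1}$, and $H_{i+1}=\langle H_i,g^{h_{i+1}}\rangle$ is obtained by adjoining a conjugate of $g$ located in $N_G(H_i)\setminus H_i$. If this chain stabilises, then by clause $4$ of Theorem \ref{t:Plotkin58} its stable term is a normal nilpotent subgroup of $G$ containing $g$, so $g\in Fitt(G)\subseteq HP(G)$ and we are done immediately. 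This is precisely the mechanism that settles the maximal-condition case (Theorem \ref{t:max-L}), where Max forces stabilisation.

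The whole difficulty lies in the case where the chain never stabilises. Here the union $U=\bigcup_i H_i$ is locally nilpotent, because any finite subset of $U$ lies in a single nilpotent $H_i$; being a locally nilpotent group with Min on abelian subgroups, $U$ is again Chernikov. This is where the minimal condition must be made to perform the service that the maximal condition performs above, and the natural devices are two chains. The divisible radicals $D_i$ of the $H_i$ form an \emph{ascending} chain whose union is abelian (any two of its elements lie in a common $D_j$) and hence Chernikov of finite rank; since a strict inclusion of divisible subgroups strictly raises the rank, the $D_i$ must freeze to a fixed divisible subgroup $D$ after finitely many steps. With $D$ fixed, the centralisers $C_D(H_i)$ form a \emph{descending} chain of subgroups of the abelian group $D$, which by Min also stabilises. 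The crux is then to control the residual finite quotients $H_i/D$ by means of these two stabilisations, showing that once both the divisible radical and its centraliser have frozen no genuinely new conjugate of $g$ can be adjoined, so that the chain terminates after all, contradicting the non-stabilising hypothesis and forcing $K$ to be finite.

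I expect this last point to be the main obstacle, and it is here that the left Engel hypothesis is indispensable rather than the bare finiteness condition: a finitely generated group with Min on abelian subgroups need not be finite (Tarski-monster-type groups satisfy Min-ab), so the finiteness of $K$ cannot come from the minimal condition alone and must be wrung out of the Engel condition through Plotkin's Lemma. In spirit the argument is the unbounded refinement of Held's Theorem \ref{thm:Held66}: there the Engel degree is bounded and the corresponding chain closes up quickly to give $\overline{L}(G)=Fitt(G)$, whereas here one must push Plotkin's ascending chain of nilpotent Chernikov subgroups all the way through the minimal condition, using the stabilisation of the ascending divisible radicals together with the descending centraliser chains, to conclude that each finitely generated $K$ is finite and therefore nilpotent. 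Running this for every finite family of conjugates shows $\langle g\rangle^G$ is locally nilpotent, whence $g\in HP(G)$ and the reverse inclusion is complete.
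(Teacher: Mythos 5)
The paper itself contains no proof of this theorem: it is stated as an unpublished result of Martin, cited from Robinson's book, so your proposal has to be judged entirely on its own merits. Your opening reductions are fine: Min on abelian subgroups forces $G$ to be periodic; $g\in HP(G)$ iff every subgroup generated by finitely many conjugates of $g$ is nilpotent; and, via Corollary \ref{co:lfl}, nilpotency of such a finitely generated subgroup is equivalent to its finiteness. The fatal problem is the mechanism you then adopt. Your plan is: run Plotkin's chain from $g$; if it stabilises then $g\in Fitt(G)$; if not, use the minimal condition to show it ``terminates after all'', a contradiction. No such contradiction can exist, because under Min-ab the chain genuinely need not stabilise. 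Take $G=\mathbb{Z}(2^{\infty})\rtimes\langle t\rangle$, the locally dihedral $2$-group with $t$ acting by inversion. It is Chernikov, hence satisfies Min and a fortiori Min on abelian subgroups; it is a locally finite $2$-group, hence locally nilpotent, so $L(G)=HP(G)=G$, in accordance with the theorem. But the conjugacy class of $t$ is the whole coset $t\mathbb{Z}(2^{\infty})$ (every element of a divisible group is a square), so the normal closure of $t$ is all of $G$, which is not nilpotent; hence $Fitt(G)=\mathbb{Z}(2^{\infty})$ and $t\in L(G)\setminus Fitt(G)$. By clause 4 of Theorem \ref{t:Plotkin58}, the Plotkin chain started at $t$ can therefore never stabilise, yet nothing is contradicted. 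In other words, a successful execution of your plan would prove $L(G)\subseteq Fitt(G)$, which is false under Min-ab; the conclusion $L(G)=HP(G)$ is strictly weaker, and this is exactly where the Min case differs from the Max case (Theorem \ref{t:max-L}) and from Held's bounded-Engel result (Theorem \ref{thm:Held66}), where the Fitting subgroup really is the answer.

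There is a second, independent defect: the machinery you propose for extracting the contradiction is vacuous. Since $G$ is periodic, each term $H_i$ of Plotkin's chain is a finitely generated periodic nilpotent group, hence \emph{finite}. So every divisible radical $D_i$ is trivial, your ``ascending chain of divisible radicals'' is the constant chain $1\leq 1\leq\cdots$, $D=1$, and the centraliser chain $C_D(H_i)$ is likewise trivial; the two stabilisations you want to play off against each other carry no information. Divisible subgroups appear only in the limit $U=\bigcup_i H_i$ (which, as you say, is Chernikov), never in the finite terms, so they cannot control how new conjugates are adjoined. What the theorem actually needs --- and what your own Tarski-monster remark shows cannot come from the minimal condition alone --- is a direct argument, using the Engel condition, that every subgroup generated by finitely many conjugates of $g$ is finite, i.e.\ that $\langle g\rangle^G$ is locally finite, \emph{without} routing this through normality in $G$. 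Once that is established, your first paragraph correctly finishes the proof; but that step is precisely what is missing, and the Plotkin-chain dichotomy you built around it points in the wrong direction.
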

A group $G$ is called an $\mathfrak{M}_c$-group or said to satisfy $\mathfrak{M}_c$ (the minimal condition on centralizers) whenever for the centralizer $C_G(X)$ of any set of elements $X$ of $G$, there is a finite subset $X_0$ of $X$ such that $C_G(X)=C_G(X_0)$.
\begin{thm}[Wagner]{\rm \cite[Corollary 2.5]{Wagner99}}\\
Let $G$ be an $\mathfrak{M}_c$-group. Then $\overline{L}(G)=Fitt(G)$.
\end{thm}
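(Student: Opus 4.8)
The plan is to prove the two inclusions separately, with essentially all of the work going into $\overline{L}(G)\subseteq Fitt(G)$. The inclusion $Fitt(G)\subseteq\overline{L}(G)$ is almost free: the excerpt records $Fitt(G)\le B(G)$ together with the proposition that $B(G)\subseteq\overline{L}(G)$, and neither of these uses any hypothesis on $G$, so the $\mathfrak{M}_c$ condition is irrelevant for this half.

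For the reverse inclusion I would first reduce to a nilpotency statement. Since $Fitt(G)$ is generated by the normal nilpotent subgroups and, by Fitting's theorem \cite{Fitting38}, the normal closure of each of its elements is nilpotent, an element $a$ lies in $Fitt(G)$ if and only if $\langle a\rangle^G$ is nilpotent. So, fixing a bounded left Engel element $a$ with $[g,_k a]=1$ for all $g\in G$, the task becomes to show that $N:=\langle a\rangle^G$ is nilpotent. Two features should be exploited. First, $N$ is generated by the conjugates $a^g$, each of which is again left $k$-Engel because $\overline{L}(G)$ is invariant under the inner automorphisms of $G$; thus $N$ is generated by a normal set of \emph{uniformly} bounded left Engel elements. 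Second, $N$ inherits $\mathfrak{M}_c$, because a centralizer computed in $N$ is just $N\cap C_G(\,\cdot\,)$ and the descending chain condition on centralizers passes to subgroups.

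The engine of the argument is the minimal condition on centralizers, which I would use exactly as Held's Theorem \ref{thm:Held66} uses the minimal condition on abelian subgroups, but with centralizers in place of abelian subgroups. Concretely, $\mathfrak{M}_c$ provides a descending chain condition on the centralizers $C_G(x_1,\dots,x_m)$, hence a well-founded rank on this family, and I would induct on that rank. The Engel relation is what drives the induction: the identity $[g,_k a]=1$ makes iterated commutation by $a$ terminate after $k$ steps, while the chain condition prevents the associated centralizers from descending indefinitely; balancing these two, via the commutator manipulations recorded in Theorem \ref{t:He}, should produce a central series in $N$ that stabilises after boundedly many steps. A natural route is to first establish that $N$ is locally nilpotent, by applying the Plotkin-type analysis behind Theorem \ref{Plotkin58} to finite subsets of the generating conjugates, and then to upgrade local nilpotency to nilpotency using the centralizer chain condition.

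The hard part will be that $\mathfrak{M}_c$ is not inherited by quotients, so the convenient induction on $G/Z(G)$ that underlies most proofs of this kind is unavailable. I would therefore carry the entire argument using absolute centralizers inside $N$ (equivalently inside $G$), never passing to a proper quotient, and I would need a genuine \emph{uniformity}: the nilpotency class produced for each finitely generated piece of $N$ must be bounded in terms of $k$ and the centralizer rank alone, so that the bound survives taking the union over all finitely generated subgroups of $N$. Converting the local, rank-decreasing argument into a single global central series for $N$ without quotienting, and securing this uniform class bound, is the step I expect to demand the most care.
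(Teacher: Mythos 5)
First, a point of order: the survey does not prove this statement at all --- it is quoted verbatim from Wagner \cite[Corollary 2.5]{Wagner99} --- so there is no in-paper proof to compare yours against, and your attempt has to stand on its own. Parts of it do: the inclusion $Fitt(G)\subseteq\overline{L}(G)$ via $Fitt(G)\leq B(G)\subseteq\overline{L}(G)$ is correct and needs no hypothesis on $G$; the reduction of the hard inclusion to showing that $N=\langle a\rangle^G$ is nilpotent for $a\in L_k(G)$ (using Fitting's result \cite{Fitting38} that normal closures of elements of $Fitt(G)$ are nilpotent) is correct; and your observations that $N$ is generated by a normal set of left $k$-Engel elements, that $\mathfrak{M}_c$ passes to subgroups but not to quotients, are all sound.

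The genuine gap is that the engine of the argument is never built, and the one concrete mechanism you do propose fails. You suggest obtaining local nilpotency of $N$ by ``applying the Plotkin-type analysis behind Theorem \ref{Plotkin58} to finite subsets of the generating conjugates,'' but Theorem \ref{Plotkin58} (equivalently Theorem \ref{t:max-L}) requires the maximal condition on abelian subgroups, and $\mathfrak{M}_c$ gives nothing of the sort for the finitely generated subgroups $\langle a^{g_1},\dots,a^{g_m}\rangle$: the minimal condition on centralizers does not imply any maximal condition (an infinite elementary abelian group already satisfies $\mathfrak{M}_c$), and it is not inherited in a useful form by these subgroups' abelian subgroup lattice. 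Without some chain condition actually holding there, the claim that finitely many left $k$-Engel elements generate a nilpotent group is essentially the open local nilpotency problem for bounded Engel groups, so the $\mathfrak{M}_c$ hypothesis must do real work at precisely this point --- yet your sketch never says how it enters. This is in fact where Wagner's actual argument is structured differently: he works globally with a $G$-invariant set of elements pairwise satisfying Engel identities and lets the centralizer chain condition act on that normal set (compare how \cite[Corollary 2.2]{Wagner99} is invoked in the theorem immediately following this one in the survey), rather than reducing to finitely generated pieces. The remaining steps you flag yourself --- upgrading local nilpotency of $N$ to nilpotency, and securing a class bound uniform enough to survive the union over finitely generated subgroups --- are exactly the content of the theorem and are left entirely open. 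As written, this is a plan whose hardest steps are either missing or rest on an inapplicable theorem, not a proof.
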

\begin{thm}
Let $G$ be an $\mathfrak{M}_c$-group. Then every left Engel element of prime power order of $G$ lies in the Hirsch-Plotkin radical of $G$.
\end{thm}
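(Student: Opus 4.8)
The plan is to show that $\langle a\rangle^G$ is locally nilpotent; since $\langle a\rangle^G$ is normal in $G$, this places it inside $HP(G)$ and hence gives $a\in HP(G)$. As local nilpotence is a local property, it is enough to prove that every finitely generated subgroup of $\langle a\rangle^G$ is nilpotent, and any such subgroup lies in one of the form $H=\langle a^{g_1},\dots,a^{g_r}\rangle$ with $g_1,\dots,g_r\in G$. I would first record two facts about $H$: being a subgroup of $G$, it again satisfies $\mathfrak{M}_c$ (the minimal condition on centralizers is inherited by subgroups), and each generator $a^{g_i}$ is a left Engel element of $H$ of $p$-power order, because $L(G)$ is invariant under conjugation and left Engel-ness passes to any subgroup containing the element. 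Thus $H$ is a finitely generated $\mathfrak{M}_c$-group generated by left Engel elements of $p$-power order, and the whole problem reduces to proving that such an $H$ is nilpotent.

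The reduction is then completed by the mechanism of Corollary \ref{co:lfl}: if $H$ turns out to be finite, then it satisfies the maximal condition on abelian subgroups, so Plotkin's Theorem \ref{Plotkin58} gives $L(H)=HP(H)$; since $H$ is generated by elements of $L(H)$, we obtain $H=HP(H)$, i.e. $H$ is nilpotent. Consequently it suffices to prove that $H$ is finite. Because $H$ is finitely generated, finiteness is equivalent to local finiteness, and here I would invoke Bryant's theorem that a periodic group satisfying the minimal condition on centralizers is locally finite. Hence the argument is reduced to a single point: showing that the finitely generated $\mathfrak{M}_c$-group $H$, generated by left Engel $p$-elements, is periodic.

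Periodicity is where the prime-power order and the centralizer condition must be used together, and it is the step I expect to be the main obstacle. My approach would be to apply Plotkin's chain theorem \ref{t:Plotkin58} to a generator $b=a^{g_1}\in L(H)$: it produces an ascending chain $K_1\le K_2\le\cdots$ of nilpotent subgroups of $H$ in which each $K_j$ is generated by finitely many conjugates of $b$. Each $K_j$ is therefore a finitely generated nilpotent group generated by $p$-elements, hence a finite $p$-group; in this way every generator has a normal closure built up from finite $p$-subgroups. The difficulty is to glue this local information into periodicity of all of $H$, and this is exactly where the two-sided chain condition on centralizers enters: since the poset of centralizers of an $\mathfrak{M}_c$-group is self-dual under $X\mapsto C_H(X)$, the minimal condition forces the maximal condition as well, so both ascending and descending chains of centralizers terminate. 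I would use this to bound the Engel degrees of the generators uniformly, showing each $b=a^{g_i}$ to be a bounded left Engel element of $H$.

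Once each generator lies in $\overline{L}(H)$, Wagner's theorem stated above yields $\overline{L}(H)=Fitt(H)$, so every $a^{g_i}$ lies in $Fitt(H)$ and its normal closure $\langle a^{g_i}\rangle^H$ is nilpotent. Then $H=\langle a^{g_1},\dots,a^{g_r}\rangle$ is contained in the product $\langle a^{g_1}\rangle^H\cdots\langle a^{g_r}\rangle^H$ of finitely many normal nilpotent subgroups, which is nilpotent by Fitting's theorem; hence $H$ is nilpotent, completing the reduction. The crux on which everything rests is the uniform boundedness of the Engel degree in the finitely generated $\mathfrak{M}_c$-group $H$ --- equivalently the periodicity of $H$ --- and extracting this uniformity from the minimal condition on centralizers, with prime-power order guaranteeing that every subgroup produced along the way is a $p$-group, is the delicate part of the argument.
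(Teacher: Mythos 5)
Your reduction to finitely generated subgroups $H=\langle a^{g_1},\dots,a^{g_r}\rangle$ of $\langle a\rangle^G$ is sound (and your side claims are correct: $\mathfrak{M}_c$ does pass to subgroups, a left Engel element of $G$ is left Engel in any subgroup containing it, and the minimal condition on centralizers is equivalent to the maximal condition by self-duality of the centralizer poset), and your final assembly --- each generator in $\overline{L}(H)=Fitt(H)$ by Wagner's result, hence each $\langle a^{g_i}\rangle^H$ nilpotent, hence $H$ nilpotent by Fitting's theorem --- would indeed close the argument. But there is a genuine gap at exactly the point you yourself flag as the crux: you never prove that the generators are \emph{bounded} left Engel elements of $H$ (equivalently, in your alternative route, that $H$ is periodic). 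Observing that minimal and maximal conditions on centralizers coincide provides no mechanism for extracting a uniform Engel bound; that uniformity is the entire difficulty, and your proposal amounts to an unfinished attempt to reprove the result of Wagner that the paper simply quotes. The paper's proof is a single step: the conjugacy class of $a$ is a $G$-invariant set of $p$-elements in which every pair satisfies an Engel identity, so Corollary 2.2 of \cite{Wagner99} applies and yields that $\langle a\rangle^G$ is a locally finite $p$-group, hence locally nilpotent, hence contained in $HP(G)$. That corollary is precisely the ``gluing'' principle your argument is missing.

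There is a second, independent defect: the statement you invoke as Bryant's theorem --- that a periodic group satisfying the minimal condition on centralizers is locally finite --- is false as stated. A Tarski monster $T$ (an infinite simple group all of whose proper nontrivial subgroups have a fixed prime order $p$) satisfies $\mathfrak{M}_c$ trivially, because the centralizer of any nontrivial element is its own cyclic subgroup of order $p$, so every chain of centralizers has length at most three; yet $T$ is periodic, finitely generated and infinite, hence not locally finite. So even if you had established periodicity of $H$, the passage to finiteness would not follow from the chain condition alone: the Engel hypothesis must enter at the gluing stage, which is again exactly what Wagner's Corollary 2.2 accomplishes in the paper's proof.
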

\begin{proof}
Let $x\in L(G)$ be a $p$-element for some prime $p$. Then the set  of all conjugates of $x$ in $G$ is a $G$-invariant subset of $p$-elements in which every pair of elements satisfies some Engel identity (in the sense of \cite[Definition 1.2]{Wagner99}). Now \cite[Corollary 2.2]{Wagner99} implies that $\langle x\rangle^G$ is a locally finite $p$-group so that $x\in HP(G)$. This completes the proof.
\end{proof}
\begin{qu}
Let $G$ be an $\mathfrak{M}_c$-group. Is it true that $L(G)=HP(G)$?
\end{qu}
A group $G$ is said to have finite (Pr\"ufer) rank  if there is an integer $r>0$ such that every finitely generated subgroup of $G$ can be generated by $r$ elements.
A group $G$ is said to have finite abelian subgroup rank if every abelian subgroup of $G$ has finite rank.
\begin{qu}
 Is $L(G)$ a subgroup for groups $G$ with finite rank? Is $L(G)$  a subgroup for groups $G$ with finite abelian subgroup rank?
\end{qu}
\subsubsection{Left $k$-Engel Elements}
In this subsection, we deal with left $k$-Engel elements, specially for small values of $k$.\\
Left 1-Engel elements are precisely  elements of the center. Left $2$-Engel elements can  easily be characterized
\begin{prop}\label{prop:l2}
\begin{enumerate}
\item For any group $G$,  $L_2(G)=\big\{ x\in G\;|\; \langle x\rangle^G \;\text{is abelian}\big\}$. In particular, $L_2(G)\subseteq Fitt(G)$
\item There is a group $K$ in which $L_2(K)$ is not a subgroup.
\end{enumerate}
\end{prop}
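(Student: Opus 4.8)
The plan is to prove part 1 by establishing both inclusions of the set equality through direct commutator manipulation, and to settle part 2 by exhibiting one concrete finite counterexample built from the characterization in part 1.

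For part 1, write $N=\langle x\rangle^G$. For the inclusion $\{x : N \text{ is abelian}\}\subseteq L_2(G)$, I would first observe that for every $g\in G$ one has $[g,x]=g^{-1}g^x=(x^{-1})^g\,x\in N$, since $N$ is normal and contains $x$; as $x\in N$ too, the abelianness of $N$ forces $\big[[g,x],x\big]=1$, i.e. $x\in L_2(G)$. For the reverse inclusion, suppose $x\in L_2(G)$, so that $[g,x]$ commutes with $x$ for every $g$. Taking $g=k$ gives that $[k,x]$ commutes with $x$, hence so does its inverse $[x,k]=[k,x]^{-1}$. The elementary identity $x^k=x[x,k]$ (immediate from $[x,k]=x^{-1}x^k$) then shows $x$ commutes with $x^k$ for every $k$, that is $[x,x^k]=1$. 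Applying the automorphism ``conjugation by $g$'' to this relation yields $[x^g,x^{kg}]=1$ for all $g,k$, so all conjugates of $x$ commute pairwise and $N$ is abelian. The \emph{in particular} clause is then immediate, since an abelian normal closure is a normal nilpotent subgroup, whence $N\le Fitt(G)$ and $x\in Fitt(G)$.

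I regard the heart of the forward direction as the short chain ``$[k,x]$ commutes with $x$ $\Rightarrow$ $[x,k]$ commutes with $x$ $\Rightarrow$ $[x,x^k]=1$''; I would isolate the two facts $x^k=x[x,k]$ and $[x,k]=[k,x]^{-1}$ as precisely what makes everything collapse, so that no iterated-commutator bookkeeping is needed. For part 2, by the characterization just proved it suffices to produce a group $K$ with elements $x,y$ whose normal closures are abelian but for which $\langle xy\rangle^K$ is non-abelian. I would take $K=U_4(\mathbb{F}_2)$, the group of $4\times4$ upper unitriangular matrices over $\mathbb{F}_2$, with matrix units $E_{ij}$, and set $x=I+E_{12}$, $y=I+E_{23}$. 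A one-line conjugation computation shows that every $G$-conjugate of $x$ has the form $I+E_{12}+\ast E_{13}+\ast E_{14}$ and every conjugate of $y$ the form $I+E_{23}+\ast E_{13}+\ast E_{24}+\ast E_{14}$; in each family the occurring matrix units multiply to zero in pairs, so both normal closures are abelian and $x,y\in L_2(K)$. On the other hand $yx=I+E_{12}+E_{23}$, and conjugating $yx$ by $I+E_{34}$ yields $I+E_{12}+E_{23}+E_{24}$, whose product with $yx$ in the two orders differs by $E_{14}$; thus $yx$ does not commute with this conjugate, $\langle yx\rangle^K$ is non-abelian, $yx\notin L_2(K)$, and $L_2(K)$ is not closed under multiplication.

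The main obstacle is locating the right group in part 2. Any group of nilpotency class at most $2$ is already $2$-Engel, so there $L_2$ equals the whole group and can never be a counterexample; one must pass to class $\ge 3$, and for a $2$-group this is exactly the regime forcing non-$2$-Engel, since the exponent-$3$ condition on $\gamma_3$ coming from Levi's laws for $2$-Engel groups would otherwise collapse $\gamma_3$ to $1$. The delicate design choice is then to take $x,y$ on \emph{different rungs} of the unitriangular group so that each alone has an abelian normal closure while their product already reaches into $\gamma_3$; the matrix-unit calculus makes both the abelianness of the individual closures and the failure for the product completely transparent.
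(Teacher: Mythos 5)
Your proof is correct. For part 1 it is essentially the paper's own argument: the paper compresses your two inclusions into a single chain of equivalences, $[ab^{-1},_2x]=1 \Leftrightarrow [x^{-ab^{-1}},x]=1 \Leftrightarrow [x^a,x^b]=1$, which is exactly your statement that $x\in L_2(G)$ if and only if the conjugates of $x$ commute pairwise; the two identities you isolate, $[g,x]=(x^{-1})^g x$ and $x^k=x[x,k]$, are precisely what drive that chain.

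For part 2 your verification is correct and genuinely more detailed than the paper's, although the counterexample group turns out to be the same one in disguise. The paper takes $K$ to be the wreath product $C_2\wr(C_2\times C_2)$ and simply asserts that $K$ is generated by left $2$-Engel elements while $K\neq L_2(K)$; filling this in requires checking that the base and top generators have abelian normal closure, and that $K$ is not a $2$-Engel group (for instance because, by Levi's laws, $\gamma_3$ of a $2$-Engel group has exponent $3$, so a $2$-Engel $2$-group has class at most $2$, whereas $K$ has class $3$ --- the very point you raise in your closing discussion). Your group $U_4(\mathbb{F}_2)$ is in fact isomorphic to this wreath product: with respect to the $2{+}2$ block decomposition one has $U_4(\mathbb{F}_2)=A\rtimes\langle I+E_{12},\,I+E_{34}\rangle$, where $A\cong C_2^4$ consists of the matrices supported off the diagonal in positions $(1,3),(1,4),(2,3),(2,4)$, and the four conjugates of $I+E_{23}$ under $\langle I+E_{12},\,I+E_{34}\rangle$ generate $A$, which identifies $A$ with the regular $\mathbb{F}_2[C_2\times C_2]$-module. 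What your route buys is a self-contained, fully checkable proof: you name two explicit left $2$-Engel elements and exhibit, by transparent matrix-unit calculus, a conjugate of $yx$ with which $yx$ fails to commute, so no appeal to generation arguments or to laws of $2$-Engel groups is needed. What the paper's formulation buys is brevity; as written it is only a sketch, and your write-up supplies exactly the details it omits.
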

\begin{proof}
1.  The proof  follows from the fact that for any elements $a,b,x\in G$, we have:
\begin{align*}
[ab^{-1},_2x]=1 &\Leftrightarrow [x^{-ab^{-1}}x,x]=1\\ &\Leftrightarrow [x^{-ab^{-1}},x]=1 \\ &\Leftrightarrow[x^{ab^{-1}},x]=1 \\ &\Leftrightarrow [x^a,x^b]=1.
 \end{align*}
 2.  Take $K$ to be the standard wreath product of a group of order $2$ and with an elementary abelian group of order $4$. The group $K$ is generated by left $2$-Engel elements but $K\not=L_2(K)$. This completes the proof of part 2.
\end{proof}
\begin{prop}
Let $A$ be any group of exponent $2^k$ for some integer $k\geq 1$ and $\langle x\rangle$ and $\langle y\rangle$ be cyclic groups of order $2$. Let $G$ be the  standard wreath product $A\wr \big (\langle x\rangle \times \langle y\rangle\big)$. Then
\begin{enumerate}
\item $x,y,xy\in L_{k+1}(G)\setminus L_{k}(G)$.
\item $ax \not\in L_{k+1}(G)$ for all $1\not=a\in A$.
\end{enumerate}
In particular, for any integer $n\geq 2$, there exists a group $G$ containing two elements $a,b\in L_n(G)$ such that $ab\not\in L_{n}(G)$.
\end{prop}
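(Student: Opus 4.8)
The plan is to treat the two parts separately, exploiting that the top group $H=\langle x\rangle\times\langle y\rangle$ is elementary abelian of order $4$. Write $B=\prod_{h\in H}A_h$ for the base group and $a_h$ for the element of the coordinate $A_h$ corresponding to $a\in A$. Since $G/B\cong H$ is abelian, we have $G'\le B$, so every commutator $[g,z]$ lies in $B$; and $B$ has exponent $2^k$.

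\emph{Part 1.} Because $x$ is an involution, I would invoke Proposition \ref{inv}. For any $g\in G$ the commutator $[g,x]$ lies in $B$ and so has order dividing $2^k$, and the same holds for $[x,g]=[g,x]^{-1}$; thus every weight-$2$ commutator containing $x$ has order dividing $2^k$, and Proposition \ref{inv}(3) (with $n=k$) yields $x\in L_{k+1}(G)$. To see $x\notin L_k(G)$, pick $a\in A$ of order $2^k$ and set $b=a_1$. Then $[b,x]=b^{-1}b^x=a_1^{-1}a_x$ has order $2^k$ (its two nontrivial coordinates each have order $2^k$), so by Proposition \ref{inv}(1), $[b,_k x]=[b,x]^{(-2)^{k-1}}\ne1$. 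The statements for $y$ and $xy$ then follow with no further work: the automorphism of $H$ permuting the three involutions $x,y,xy$ extends to an automorphism of $G=A\wr H$ permuting the base coordinates, and $L_k(G)$, $L_{k+1}(G)$ are automorphism-invariant.

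\emph{Part 2.} Here $ax$ is \emph{not} an involution, since $(ax)^2=a_1a_x\ne1$ for $a\ne1$, so Proposition \ref{inv} no longer applies and I would compute the Engel commutators directly. Two observations make this manageable: (i) $[g,ax]\in B$ for every $g$ (as $H$ is abelian), so after the first step all further commutators with $ax$ act inside $B$; and (ii) on $B$ conjugation by $ax$ is $\gamma\mapsto(\gamma^a)^x$, i.e. conjugation by the involution $x$ up to the inner action of $a$. Working additively and using the relation $(x-1)^2=-2(x-1)$ (the ring-theoretic form of the doubling in Proposition \ref{inv}(1)), a short induction gives, for the choice $g=y$,
\[
[y,_{\,k+1}(ax)] \text{ has coordinate at } h \text{ equal to } a^{\varepsilon_h 2^{k-1}},\quad (\varepsilon_1,\varepsilon_x,\varepsilon_y,\varepsilon_{xy})=\pm(1,-1,-1,1),
\]
the sign pattern coming from $(1-x)(1-y)$. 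This element is nontrivial exactly when $a$ has order $2^k$, which gives $ax\notin L_{k+1}(G)$ for $a$ of maximal order.

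\emph{The main obstacle} is precisely the exponent bookkeeping in (ii): each pass of the commutator multiplies by $(x-1)$ and hence, via $(x-1)^2=-2(x-1)$, injects a factor $2$ into the exponent of $a$, while any base-group part of $g$ is hit by $(x-1)^{k+1}$ and so acquires a factor $2^k$ that dies in exponent $2^k$. The real content is to show that over $k+1$ steps exactly one factor of $2$ is \emph{saved}, leaving $a^{2^{k-1}}$ rather than $a^{2^k}$; this is what pins down the role of the order of $a$ and singles out elements of full order $2^k$, which is the case needed below. Finally, for the concluding assertion, given $n\ge2$ I would put $k=n-1$ and take $A=\langle a\rangle$ cyclic of order $2^k$: since $A$ is abelian the conjugates $a_h$ commute, so $\langle a\rangle^G$ is abelian and Proposition \ref{prop:l2} gives $a\in L_2(G)\subseteq L_n(G)$, while $x\in L_n(G)$ by Part 1 and $ax\notin L_n(G)$ by Part 2. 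Thus $a,x\in L_n(G)$ but $ax\notin L_n(G)$, so $L_n$ fails to be closed under multiplication.
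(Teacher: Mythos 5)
There is nothing in the paper to compare against here: the proposition is stated without any proof (the text moves straight on to Engel lengths), so your argument has to be judged on its own terms. On those terms, your Part 1 is complete and correct: $G'\le B$ and $B$ has exponent $2^k$, so Proposition~\ref{inv}(3) gives $x\in L_{k+1}(G)$; choosing $a\in A$ of order exactly $2^k$ (such an element exists, since in a group of exponent $2^k$ the maximal element order equals $2^k$) and $b=a_1$, Proposition~\ref{inv}(1) gives $[b,_k x]=(a_1^{-1}a_x)^{(-2)^{k-1}}\ne 1$, so $x\notin L_k(G)$; and the symmetry step is legitimate because every automorphism of $H$ extends to $A\wr H$ by permuting base coordinates and the sets $L_m(G)$ are automorphism-invariant. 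Your Part 2 computation is also correct, with one remark worth making explicit: for non-abelian $A$ one cannot work additively on all of $B$, but starting from $[y,ax]=a_xa_{xy}^{-1}$ every iterated commutator stays inside the abelian subgroup $\langle a_h : h\in H\rangle$, on which conjugation by $ax$ literally equals the coordinate swap induced by $x$, so the group-ring calculation is valid and yields coordinates $a^{\pm(-2)^{k-1}}$ in the pattern of $(1-x)(1-y)$; this is nontrivial if and only if $a^{2^{k-1}}\ne1$. Your derivation of the concluding assertion (via $k=n-1$, $A$ cyclic, and $a\in L_2(G)$ by Proposition~\ref{prop:l2}) is likewise correct and, as you note, invokes Part 2 only for a generator of $A$.

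What you do not prove is Part 2 as literally stated, for \emph{all} $1\ne a\in A$, and you flag this honestly as the remaining obstacle. But you should not try to close that gap, because for $k\ge 2$ the unrestricted statement is false, and your own computation essentially shows why. Take $A=\langle c\rangle\cong C_4$ (so $k=2$) and $a=c^2$. Since $A$ is abelian, $ax$ acts on $B$ exactly as $x$ does, so one may compute additively in $B\cong(\mathbb{Z}/4)[H]$: writing $g=vh$ with $v\in B$ and $h\in H$, the first commutators are $[v,ax]=v(x-1)$, $[vx,ax]=(a-v)(x-1)$, $[vy,ax]=v(xy-y)+a(x-xy)$, $[vxy,ax]=v(y-xy)+a(x-y)$, and each further commutator is right multiplication by $(x-1)$. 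Using $(x-1)^2=-2(x-1)$, $(xy-y)(x-1)=-2(xy-y)$, $(x-xy)(x-1)=(1-x)(1-y)$ and $(x-y)(x-1)=(1-x)(1+y)$, one sees that in all four cases $[g,_3(ax)]$ is a sum of terms carrying either a factor $4$ on an element of $B$ or a factor $2$ on $a$, and all of these vanish because $\exp B=4$ and $2a=0$. Hence $ax\in L_3(G)$ for this nontrivial $a$, contradicting Part 2 as printed. The correct assertion -- and the one your proof establishes -- is that $ax\notin L_{k+1}(G)$ exactly for those $a$ with $a^{2^{k-1}}\ne 1$, i.e.\ of order $2^k$, and that restricted version is all the final conclusion of the proposition needs.
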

Let $x$ be a bounded left (right, resp.) Engel element of a group $G$. The left (right, resp.) Engel length of $x$ is defined to be the least non-negative  integer $n$ such that $x\in L_n(G)$ ($x\in R_n(G)$, resp.) and it is denoted by $\ell^l_G(x)$ ($\ell^r_G(x)$, resp.).
 Roman'kov \cite[Question 11.88]{Kourovka} asked  whether for any group $G$ there exists a linear
(polynomial) function $\phi(x, y)$ such that $\ell_G^l(uv)\leq \phi(\ell_G^l(u),\ell_G^l(v))$ for elements $u,v\in\overline{L}(G)$.
Dolbak \cite{Dolbak} answered negatively the  question of Roman'kov \cite[Question 11.88]{Kourovka}.
We propose the following problem.
\begin{prob} \label{prob:ll}Let $G$ be an arbitrary group. Find all  pairs $(n,m)$ of positive integers such that,
   $xy\in \overline{L}(G)$ whenever $x\in L_n(G)$ and $y\in L_m(G)$.
\end{prob}
We now shortly show that for every integer $m>0$, all pairs $(2,m)$  are of the solutions of part 1 of Problem \ref{prob:ll}.
\begin{prop}
Let $G$ be any group and $a\in L_2(G)$ and $b\in L_n(G)$ for some $n\geq 1$. Then both $ab$ and $ba$ are in  $L_{2n}(G)$.
\end{prop}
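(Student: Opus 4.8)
The plan is to reduce both statements to a single observation: modulo the normal closure $A:=\langle a\rangle^G$ the products $ab$ and $ba$ behave exactly like $b$, while on $A$ itself conjugation by $ab$ or by $ba$ coincides with conjugation by $b$. Combining these two facts with the degree identity $[g,_{2n}c]=[[g,_n c],_n c]$ (immediate from the recursive definition of the left-normed Engel commutator with $p=q=n$) will finish the proof quickly.

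First I would record the structural input. By part 1 of Proposition~\ref{prop:l2}, the hypothesis $a\in L_2(G)$ says precisely that $A=\langle a\rangle^G$ is an abelian normal subgroup of $G$; in particular $a\in A$, so $a$ commutes with every element of $A$. Since $A$ is normal, for every $w\in A$ and every $g\in G$ we get $w^{ag}=(w^a)^g=w^g$ (using $w^a=w$) and $w^{ga}=(w^g)^a=w^g$ (using $w^g\in A$). Specializing with $g=b$ yields $w^{ab}=w^b=w^{ba}$ for all $w\in A$.

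Next I would handle $ab$; write $c=ab$. Working in $G/A$, where the image of $a$ is trivial, the image of $c$ equals that of $b$; hence the image of $[g,_n c]$ equals that of $[g,_n b]=1$ (this last equality holds because $b\in L_n(G)$), so $[g,_n c]\in A$ for every $g\in G$. Put $w=[g,_n c]\in A$. Because $A$ is normal and $w\in A$, the identity $w^{c}=w^{b}$ from the previous paragraph propagates: a short induction on $k$ shows that each bracket $[w,_k c]$ lies in $A$ and equals $[w,_k b]$, the inductive step being $[w,_k c]=([w,_{k-1}c])^{-1}([w,_{k-1}c])^{c}=([w,_{k-1}b])^{-1}([w,_{k-1}b])^{b}=[w,_k b]$. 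Taking $k=n$ gives $[w,_n c]=[w,_n b]=1$, whence $[g,_{2n}c]=[[g,_n c],_n c]=[w,_n c]=1$ and $ab=c\in L_{2n}(G)$. The argument for $ba$ is verbatim the same: with $c'=ba$ one again has $[g,_n c']\in A$ and $w^{c'}=w^b$ for $w\in A$, so $[w,_n c']=[w,_n b]=1$ and $ba\in L_{2n}(G)$.

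I do not expect a serious obstacle here; the only point requiring care is the inductive claim that the intermediate Engel brackets $[w,_k c]$ stay inside $A$, so that the substitution $w^c=w^b$ may legitimately be applied at every step. This is guaranteed by the normality of $A$, since $[w,c]=w^{-1}w^c\in A$ whenever $w\in A$. I would also double check the quotient computation $[g,_n c]\in A$ and the bookkeeping identity $[g,_{2n}c]=[[g,_n c],_n c]$, both of which are routine.
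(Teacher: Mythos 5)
Your proof is correct and takes essentially the same route as the paper's: both pass to the abelian normal closure $A=\langle a\rangle^{G}$ (available since $a\in L_2(G)$), show $[g,_{n}ab]\in A$ by working modulo $A$, and then replace $ab$ by $b$ in the remaining $n$ brackets to get $[g,_{2n}ab]=\big[[g,_{n}ab],_{n}ab\big]=\big[[g,_{n}ab],_{n}b\big]=1$. The only cosmetic difference is the last step: the paper deduces $ba\in L_{2n}(G)$ at once from $ba=(ab)^{a}$ and the conjugation-invariance of $L_{2n}(G)$, whereas you rerun the same argument symmetrically for $ba$.
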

\begin{proof}
Let $g\in G$ and $X=\langle a\rangle^G$. Then $$[g,_{n}ab]X=[gX,_{n}aXbX]=[g,_n b]X=X,$$ where the last equality holds as $b\in L_n(G)$. Therefore $[g,_{n}ab]\in X$. So we have
\begin{align*}
[g,_{2n}ab]&=[[g,_n ab],_n ab]\\
&=[[g,_n ab],_n b] \;\;\text{Since}\; [g,_n ab], a\in X \;\text{and}\; X \;\text{is abelian normal in}\; G\\
&=1 \;\;\text{Since}\; b\in L_n(G).
\end{align*}
This proves that $ab\in L_{2n}(G)$. Since $L_{2n}(G)$ is closed under conjugation, $(ab)^a=ba$ is also in $L_{2n}(G)$. This completes the proof.
\end{proof}
Let us ask the following question that we suspect it to be true.
\begin{qu}
Is it true that the product of every two left $3$-Engel element is a left Engel element?
\end{qu}

The following question is arisen by part 1 of Proposition \ref{prop:l2}. Is it true that every bounded left Engel element is in the Hirsch-Plotkin radical? In general for an arbitrary group $K$ it is not necessary that
$L_n(K)\subseteq HP(K)$.  Suppose, for a contradiction, that $L_n(K)\subseteq HP(K)$ for all $n$ and all groups $K$. By a deep result of Ivanov \cite{Ivanov94}, there is a finitely generated infinite group
$M$ of exponent $2^k$ for some positive integer $k$. Suppose that $k$ is the least integer with
this property, so every finitely generated group of exponent dividing $2^{k-1}$ is finite. By Proposition \ref{inv}  every element of order 2 in
$M$ belongs to $L_{k+1}(M)$. So by hypothesis, $M=HP(M)$ is of exponent dividing $2^{k-1}$ and so
it is finite. Since $M$ is finitely generated, $HP(M)$ so is. But this yields that $HP(M)$
is a periodic finitely generated nilpotent group and so it is finite. It follows that $M$ is
finite, a contradiction.  This argument can be found in \cite{Abdollahi2004}.\\
Hence the following question  naturally arises.
\begin{qu}\label{qu:lnhp}
 What is the least positive integer
$n$ for which  there is a group $G$ with  $L_n(G)\nsubseteq HP(G)$?
\end{qu}
If one uses Lysenkov's result \cite{Lysenkov96} instead of Ivanov's one \cite{Ivanov94} in the above argument, we find that the requested integer $n$ in Question \ref{qu:lnhp} is less than or equal to $13$.  To investigate Question \ref{qu:lnhp} one should first study the case
$n = 3$ which  was already started in \cite{Abdollahi2004}.
\begin{prop}[Abdollahi] {\rm \cite[Corollary 2.2]{Abdollahi2004}}\label{co:Abdollahi2004}
For an arbitrary group $G$,
$$L_3(G) = \{x\in G \;|\; \langle x, x^y\rangle \;\text{is nilpotent of class at most}\; 2 \;\text{for all}\; y\in G\}.$$
In particular, every power of a left $3$-Engel element is also a left $3$-Engel element.
\end{prop}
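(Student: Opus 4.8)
The plan is to prove the two inclusions separately, after first recording the equivalent reformulation that $\langle x, x^y\rangle$ is nilpotent of class at most $2$ for all $y\in G$ if and only if $[x^a, x^b, x^b]=1$ for all $a,b\in G$. This reformulation is routine: a two-generator group $\langle u,v\rangle$ is nilpotent of class at most $2$ exactly when $[u,v,u]=[u,v,v]=1$; applying this to the generators $x, x^y$, using $\langle x^a, x^b\rangle=\langle x, x^{ba^{-1}}\rangle^a$ together with conjugation-invariance of nilpotency class, and using the elementary identity $[u,v,u]=1\Leftrightarrow[v,u,u]=1$ (which follows from $[u,v]=[v,u]^{-1}$ and $[s^{-1},t]=[s,t]^{-s^{-1}}$), collapses both defining relations into the single family $[x^a,x^b,x^b]=1$.

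For the inclusion $\{x:\langle x,x^y\rangle\text{ is class}\le 2\text{ for all }y\}\subseteq L_3(G)$ I would argue directly: for $g\in G$ one has $[g,x]=(x^{-1})^g x\in\langle x, x^g\rangle$, and since $\langle x, x^g\rangle$ is nilpotent of class at most $2$, the weight-three commutator $[g,_3x]=[[g,x],x,x]$ lies in $\gamma_3(\langle x,x^g\rangle)=1$. Hence $x\in L_3(G)$.

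For the reverse inclusion the first step is the commutator identity $[g,_3x]=[(x^{-1})^g,_2x]^x$, proved exactly in the style of Theorem~\ref{t:He} by repeatedly using $[uv,x]=[u,x]^v[v,x]$, $[x,x]=1$ and $[s^x,x]=[s,x]^x$. Consequently $x\in L_3(G)$ yields $[(x^{-1})^g,x,x]=1$ for all $g$. The crux, and what I expect to be the main obstacle, is to \emph{remove the inverse}, i.e. to pass from this to $[x^g,x,x]=1$ for all $g$. I would do this with the Hall--Witt identity applied to the triple $(x^g, x^{-1}, x)$: since $[x^{-1},x^{-1}]=1$ annihilates one of its three cyclic terms, it reduces to $[x^g,x,x]^{x^{-1}}=[x,(x^{-1})^g,x^{-1}]^{-x^g}$, whence $[x^g,x,x]=1\Leftrightarrow[x,(x^{-1})^g,x^{-1}]=1$. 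Two applications of the elementary inverse-commutator identities ($[p,x^{-1}]=1\Leftrightarrow[p,x]=1$ and $[a,b,a]=1\Leftrightarrow[b,a,a]=1$) identify the right-hand condition with $[(x^{-1})^g,x,x]=1$, which we already have; hence $[x^g,x,x]=1$ for all $g$.

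Finally I would upgrade this to the full family: since $L_3(G)$ is conjugation-invariant, every conjugate $x^b$ is also left $3$-Engel, so the previous paragraph applied to $x^b$ gives $[x^{a},x^b,x^b]=1$ for all $a,b\in G$, which by the reformulation of the first paragraph is precisely the assertion that $\langle x,x^y\rangle$ is nilpotent of class at most $2$ for every $y$. The ``in particular'' statement is then immediate: if $x\in L_3(G)$ and $n$ is any integer, then $x^n$ and $(x^n)^y=(x^y)^n$ both lie in $\langle x,x^y\rangle$, so $\langle x^n,(x^n)^y\rangle\le\langle x,x^y\rangle$ is nilpotent of class at most $2$, and the characterization just proved gives $x^n\in L_3(G)$.
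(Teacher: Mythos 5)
Your proof is correct, and there is an important point of comparison to make: the survey itself offers \emph{no} proof of this proposition --- it is stated with a citation to \cite[Corollary 2.2]{Abdollahi2004} --- so what you have produced is a self-contained derivation rather than a reconstruction of an argument in the text. The easy inclusion (class $\le 2$ for all $\langle x,x^y\rangle$ implies $x\in L_3(G)$), via $[g,x]=(x^{-1})^g x\in\langle x,x^g\rangle$ and $\gamma_3\bigl(\langle x,x^g\rangle\bigr)=1$, is exactly the expected argument. The substantive content is the converse, and you correctly isolate its only real difficulty: Heineken's identity (Theorem \ref{t:He}, part 1, with $n=2$) converts the left $3$-Engel condition into $[(x^{-1})^g,x,x]=1$ for all $g$, a statement about conjugates of $x^{-1}$, whereas the characterization needs $[x^g,x,x]=1$, a statement about conjugates of $x$, and these are not formally interchangeable. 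Your Hall--Witt computation with the triple $(x^g,x^{-1},x)$ genuinely bridges this gap: the middle term dies because $[x^{-1},x^{-1}]=1$, leaving $[x^g,x,x]^{x^{-1}}=\bigl([x,(x^{-1})^g,x^{-1}]^{x^g}\bigr)^{-1}$, and the right-hand side vanishes by your two elementary identities combined with the already-established $[(x^{-1})^g,x,x]=1$; I checked the conjugation bookkeeping and it is right. The globalization step (apply the conclusion to each conjugate $x^b$, which lies in $L_3(G)$ by normality of that set, then collapse the two class-two relations into the single family $[x^a,x^b,x^b]=1$ via the swap identity $[u,v,u]=1\Leftrightarrow[v,u,u]=1$) and the ``in particular'' deduction for powers are also sound. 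Whatever route the original article takes, your argument stands on its own, using nothing beyond Theorem \ref{t:He} --- which the survey does prove --- and the standard Hall--Witt identity; that makes it a worthwhile addition, since a reader of the survey currently has to consult \cite{Abdollahi2004} to see why this characterization holds.
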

\begin{thm}[Abdollahi] {\rm \cite[Theorem 1.1]{Abdollahi2004}}\label{thm:Abdollahi2004}
Let  $p$ be any prime number and $G$ be a group. If   $x\in L_3(G)$ and $x^{p^n}=1$ for some integer $n>1$, then $\langle x^p\rangle^G$ is soluble of derived length at most $n-1$ and $x^p\in B(G)$. In particular, $\langle x^p\rangle^G$ is locally nilpotent.
\end{thm}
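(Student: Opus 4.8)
My plan is to argue by induction on $n$, reducing the entire derived-length bound to a single commutator computation in nilpotent groups of class $2$; the subnormality and local nilpotence will then follow formally. The tool that does all the real work is Proposition~\ref{co:Abdollahi2004}: for any two conjugates $s=x^{g}$ and $t=x^{h}$ of $x$, the subgroup $\langle s,t\rangle=\langle x,x^{g^{-1}h}\rangle^{g}$ is nilpotent of class at most $2$, so $[s,t]$ is central in $\langle s,t\rangle$ and the identity $[s^{k},t^{l}]=[s,t]^{kl}$ holds for all integers $k,l$. I will also use repeatedly that a power of a left $3$-Engel element is again left $3$-Engel and that Engel conditions pass to quotients.

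For the base case $n=2$, every conjugate $s=x^{g}$ satisfies $s^{p^{2}}=1$, so for any two conjugates $s,t$ one computes
\[
[s^{p},t^{p}]=[s,t]^{p^{2}}=[s^{p^{2}},t]=[1,t]=1 .
\]
Since the conjugates of $x^{p}$ are precisely the elements $s^{p}$ with $s$ a conjugate of $x$, the normal closure $N:=\langle x^{p}\rangle^{G}$ is generated by pairwise commuting elements and is therefore abelian, i.e.\ of derived length at most $1=n-1$. For the inductive step with $n\geq 3$, I would apply the induction hypothesis to $x^{p}\in L_3(G)$, which satisfies $(x^{p})^{p^{\,n-1}}=1$, to conclude that $\langle x^{p^{2}}\rangle^{G}$ is soluble of derived length at most $n-2$. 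Passing to $\bar G:=G/\langle x^{p^{2}}\rangle^{G}$, the image $\bar x$ is a left $3$-Engel element with $\bar x^{\,p^{2}}=1$, so the base case gives that $\langle\bar x^{p}\rangle^{\bar G}=N/\langle x^{p^{2}}\rangle^{G}$ is abelian. Hence $N'\leq\langle x^{p^{2}}\rangle^{G}$, and $N$ has derived length at most $(n-2)+1=n-1$, which is the first assertion.

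It remains to deduce the Baer-radical and local-nilpotence statements, and here I would invoke the soluble case of Gruenberg's theorem recorded earlier. The group $N=\langle x^{p}\rangle^{G}$ is soluble and normal in $G$, and $x^{p}\in L_3(G)\subseteq L_3(N)\subseteq\overline{L}(N)$. Since $\overline{L}(N)=B(N)$ for the soluble group $N$, the subgroup $\langle x^{p}\rangle$ is subnormal in $N$; as $N\trianglelefteq G$, it is subnormal in $G$, so $x^{p}\in B(G)$. Finally, because $B(G)$ is a normal locally nilpotent subgroup, $\langle x^{p}\rangle^{G}\leq B(G)$ is locally nilpotent.

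The only genuinely delicate point is the base case, and it is instructive to see exactly where the hypotheses enter: one must use that $x^{p}$ is a $p$-th power of an element whose order divides $p^{2}$, so that the central class-$2$ commutator $[s,t]$ is killed by $p^{2}$ and $[s^{p},t^{p}]$ collapses to $1$. (Note that this argument is insensitive to the parity of $p$, since the identity $[s^{k},t^{l}]=[s,t]^{kl}$ needs no assumption when $[s,t]$ is central.) Everything else is bookkeeping: the induction transports the derived-length bound through the quotient by $\langle x^{p^{2}}\rangle^{G}$, and Gruenberg's $\overline{L}=B$ theorem upgrades ``bounded left Engel in $N$'' to ``subnormal in $G$''.
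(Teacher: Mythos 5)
Your proof is correct, and since the survey records this theorem only by citation to \cite{Abdollahi2004} (no proof is reproduced in the text), there is nothing in-paper to diverge from; your route --- induction on $n$, with the base case reduced via Proposition \ref{co:Abdollahi2004} to the class-$2$ identity $[s^p,t^p]=[s,t]^{p^2}=[s^{p^2},t]=1$ for conjugates $s,t$ of $x$, followed by Gruenberg's theorem $\overline{L}(N)=B(N)$ for the soluble normal closure $N$ and Baer's result that $B(G)$ is normal and locally nilpotent --- is exactly the argument of the cited source, assembled from ingredients the survey itself supplies. The only slip is immaterial: with the convention $x^g=g^{-1}xg$ one has $\langle x^g,x^h\rangle=\langle x,x^{hg^{-1}}\rangle^{g}$ rather than $\langle x,x^{g^{-1}h}\rangle^{g}$, but all your computation needs is that any two conjugates of $x$ generate a conjugate of some $\langle x,x^z\rangle$, which is true.
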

Theorem \ref{thm:Abdollahi2004} reduces the verification of the question whether any left $3$-Engel element of prime power order lies in the Hirsch-plotkin radical to the following.
\begin{qu}\label{qu:l3-p}
Let $G$ be a group and $x\in L_3(G)$ of prime   order $p$. Is it true that $x\in HP(G)$?
\end{qu}
The positive answer of Question \ref{qu:l3-p} for the case $p=2$   gives a new proof for the local finiteness of groups of exponent $4$.

Two left $3$-Engel elements generate a nilpotent group of class at  most $4$.
\begin{thm}[Abdollahi] {\rm \cite[Theorem 1.2]{Abdollahi2004}}\label{thm:Abdollahi2004-1.2}
Let G be any group and $a, b\in L_3(G)$. Then $\langle a, b\rangle$ is nilpotent of class
at most $4$.
\end{thm}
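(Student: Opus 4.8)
The plan is to reduce everything to the finitely generated group $H=\langle a,b\rangle$ and to prove that $\gamma_5(H)=1$. Since the defining relations $[g,_3 a]=1$ and $[g,_3 b]=1$ hold for every $g\in G$, they hold in particular for every $g\in H$, so $a,b\in L_3(H)$ and we may replace $G$ by $H$. The relations I would exploit are of two kinds. First, by Proposition \ref{co:Abdollahi2004}, for every $h\in H$ the subgroups $\langle a,a^h\rangle$ and $\langle b,b^h\rangle$ are nilpotent of class at most $2$; writing $a^h=a[a,h]$ this says that $[a,[a,h]]$ commutes with both $a$ and $a^h$ (and symmetrically for $b$). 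Second, the full laws $[h,a,a,a]=1=[h,b,b,b]$ hold for every $h\in H$, including non-generators such as $h=[a,b]$ or $h$ a conjugate of $a$ or $b$. Polarizing the class-$2$ relations (replacing a generator by a product and expanding) produces the multilinear commutator identities among conjugates of $a$ and of $b$ that drive the calculation.

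The next step is to establish that $H$ is nilpotent, which reduces the class bound to a finite computation. I would do this through the two normal closures $A=\langle a\rangle^H$ and $B=\langle b\rangle^H$. Each is normal in $H$ and is generated by left $3$-Engel elements of $H$ (conjugates of $a$, respectively $b$, are again left $3$-Engel, since $L_3(H)$ is invariant under inner automorphisms), and one has $H=AB$; thus, once each of $A$ and $B$ is shown to be nilpotent, Fitting's theorem delivers the nilpotency of $H$, with a class bound that is the sum of the two. An alternative, purely external route to nilpotency is to observe that every finite quotient $Q$ of $H$ is generated by bounded left Engel elements, which by Baer's theorem lie in the Fitting subgroup of $Q$, so that $Q$ is nilpotent; combined with a uniform bound on the classes of these quotients this would again suffice.

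With nilpotency in hand, it remains to pin the class at $4$, that is, to verify $\gamma_5(H)=1$. I would carry this out in the free nilpotent group of class $5$ on two generators, namely $F/\gamma_6(F)$ with $F$ the free group on $a$ and $b$, using the Hall basis of basic commutators of weight at most $5$: impose the Engel relations listed above and show that they force every basic commutator of weight $5$ to vanish, so that $\gamma_5$ collapses.

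The main obstacle is exactly this last, sharp step. The easy consequences of Proposition \ref{co:Abdollahi2004} are only the ``$2$-Engel-like'' pairwise relations $[a,a^h,a^h]=1$, and these alone do not kill weight-$5$ commutators (already the $2$-Engel law permits class $3$). To finish one must feed the full law $[h,a,a,a]=1$ with $h$ taken to be suitable weight-$2$ and weight-$3$ commutators in $a$ and $b$, and then organize the resulting integer linear relations among the weight-$4$ and weight-$5$ basic commutators so that they close up and eliminate every weight-$5$ term. Keeping this bookkeeping correct, and tracking which weight-$4$ commutators survive so that the bound of $4$ is the genuine outcome, is the delicate heart of the argument.
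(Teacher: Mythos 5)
This survey states Theorem \ref{thm:Abdollahi2004-1.2} without proof, quoting it from \cite{Abdollahi2004}, so there is no in-paper argument to compare against; judged on its own terms, your proposal has a genuine gap at exactly the point where all the work lies: you never prove that $H=\langle a,b\rangle$ is nilpotent. Your main route is to show that $A=\langle a\rangle^H$ and $B=\langle b\rangle^H$ are nilpotent and then apply Fitting's theorem to $H=AB$. The frame itself is reasonable --- from $[a,_3 b]=[b,_3 a]=1$, Lemma \ref{lem:x^y} gives $\langle a\rangle^{\langle b\rangle}=\langle a,a^b,a^{b^2}\rangle$, and the class-$\leq 2$ property of Proposition \ref{co:Abdollahi2004} shows this subgroup is normalized by $a$ as well as by $b$, so $A$ is even finitely generated --- but the clause ``once each of $A$ and $B$ is shown to be nilpotent'' is essentially the entire content of the theorem, and you offer no argument for it. It does not follow from the relations you list: the pairwise relations say that any two conjugates of $a$ generate a group of class at most $2$, and even when that condition holds for \emph{all} pairs of elements of a group (i.e., in $2$-Engel groups) Levi's theorem bounds the class only by $3$; when it holds merely on a conjugacy class, no nilpotency conclusion is available at all --- whether a left $3$-Engel element must even lie in the Hirsch-Plotkin radical is recorded as open in this very survey (see Question \ref{qu:l3-p} and the discussion around Question \ref{qu:lnhp}), and Theorem \ref{thm:Abdollahi2004} obtains only solubility-type conclusions, and only for elements of prime-power order.

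Your fallback route is not merely incomplete but invalid: knowing that every finite quotient $Q$ of $H$ is nilpotent of class at most $4$ (via Baer's theorem) implies nothing about $H$ unless $H$ is residually finite, which is not known here; a finitely generated group all of whose finite quotients are nilpotent need not be nilpotent --- finitely generated infinite simple groups satisfy the hypothesis vacuously. Finally, note the logical order of your last step: imposing the Engel relations in the free nilpotent group of class $5$ can only ever show $\gamma_5(H)=\gamma_6(H)$, which yields $\gamma_5(H)=1$ only \emph{after} nilpotency of $H$ is known. That finite computation is the easy part (it is the kind of calculation that the NQ remark following the theorem in this paper refers to), not the ``delicate heart'' you describe; the heart is the unproved nilpotency claim, which is precisely where any correct proof, including the published one in \cite{Abdollahi2004}, has to do its work beyond Proposition \ref{co:Abdollahi2004}.
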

NQ package \cite{Nickel98} can show  $4$ is the best bound in Theorem \ref{thm:Abdollahi2004-1.2}.
\begin{qu}[Abdollahi] {\rm \cite[Question]{Abdollahi2004}} Is there a function $f:\mathbb{N}\rightarrow\mathbb{N}$ such that every nilpotent group generated by
$d$ left $3$-Engel elements is nilpotent of class at most $f(d)$?
\end{qu}
 In particular, whether the number $n$ of Question \ref{qu:lnhp} is greater than $4$ or not is of special interest. Indeed, if it were greater than $4$, then every group of exponent $8$ would be locally finite. Here are some thought on the left $4$-Engel elements of a group.
 \begin{thm}[Abdollahi \& Khosravi]{\rm \cite[Theorem 1.5]{AbdollahiKhosravi1}}
Let $G$ be an arbitrary group and both $a$ and $a^{-1}$ belong to $L_4(G)$. Then  $\langle a,a^b\rangle$ is nilpotent of class at most
$4$ for all $b\in G$.
\end{thm}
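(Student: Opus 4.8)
The plan is to strip the statement down to a single two-generator group and then to prove nilpotence and the precise class bound there. First I would record that the hypotheses are symmetric in $a$ and $a^b$: since conjugation by $b$ is an automorphism of $G$ and the sets $L_4$ are automorphism-invariant, from $a,a^{-1}\in L_4(G)$ we also get $a^b,(a^b)^{-1}\in L_4(G)$. To isolate the essential content I would pass to a relatively free model: let $W$ be the group on free generators $x,b$ subject to $[w,_4x]=[w,_4x^{-1}]=1$ for every $w\in W$, and put $E=\langle x,x^b\rangle\le W$. Because $\langle a,b\rangle\le G$ satisfies the defining relations of $W$, the assignment $x\mapsto a$, $b\mapsto b$ induces an epimorphism $E\twoheadrightarrow\langle a,a^b\rangle$, so it suffices to show that $E$ is nilpotent of class at most $4$. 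The relations available inside $E$ are abundant: every $W$-conjugate of $x^{\pm1}$ is a left $4$-Engel element of $W$, so for all $w\in E$ one has
\[
[w,_4x]=[w,_4x^{-1}]=[w,_4x^{b}]=[w,_4(x^{b})^{-1}]=1,
\]
together with the conjugated versions of these identities.

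The heart of the proof is to show that $E$ is nilpotent, and here I would be careful not to appeal to the local nilpotence of $4$-Engel groups: a product such as $x\cdot x^b$ need not satisfy any Engel condition, so $E$ is not assumed to be an Engel group. The route I would take is to establish solubility first and then upgrade it to nilpotence through the soluble theory already quoted in the survey. Concretely, once $E$ is known to be soluble, Gruenberg's theorem gives $\overline{L}(E)=B(E)$; since $x,x^b\in L_4(E)\subseteq\overline{L}(E)$, this forces $E=\langle x,x^b\rangle\le B(E)$, so $E=B(E)$ is locally nilpotent, and a finitely generated locally nilpotent group is nilpotent. To obtain solubility I would grind the Engel relations above through the Hall--Witt identity and its partial linearizations, substituting products and conjugates $x^{h}$, $(x^b)^{h}$ for the free variable $w$, so as to bound the derived length of $E$ by a constant.

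Granting nilpotence, the final step is to pin the class at $4$. Here I would pass to the associated graded Lie ring of $E$, in which the relations displayed above linearize to $(\operatorname{ad}u)^4=0$ for $u$ ranging over the orbit of $x^{\pm1}$ under the relevant conjugations; combining these linearized Engel identities lets one collect and annihilate every basic commutator of weight $5$ in the two generators, which is exactly the assertion $\gamma_5(E)=1$. This is the sort of weight-$5$ collection that the NQ package cited earlier verifies mechanically, and the same computation exhibits a two-generator example of class exactly $4$, showing the bound is best possible. I expect the genuine obstacle to be the nilpotence step rather than the Lie computation: because $E$ is not globally Engel, one must exploit the boundedness of the Engel degree and the fact that both $x^{\pm1}$ and $(x^b)^{\pm1}$ are $4$-Engel, rather than any blanket local-nilpotence result, whereas the weight-$5$ calculation is mechanical once $E$ is known to be nilpotent.
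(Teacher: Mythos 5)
Your setup is fine as far as it goes: the passage to the relatively free group $W$ and to $E=\langle x,x^b\rangle$ is the same kind of reduction the survey itself uses in Section 2, and your Gruenberg step is correctly quoted --- if $E$ were soluble, then $\overline{L}(E)=B(E)$ (or $L(E)=HP(E)$) by \cite{Gruenberg59} would give $E=B(E)$, hence $E$ locally nilpotent, hence nilpotent because $E$ is $2$-generated. The problem is that the input to that step, the solubility of $E$, is precisely the hard content of the theorem, and your proposal offers no argument for it: ``grinding the Engel relations through the Hall--Witt identity and its partial linearizations so as to bound the derived length by a constant'' is a declaration of intent, not a proof, and no such elementary derivation is known. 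For comparison, even under the far stronger hypothesis that \emph{every} element is left $4$-Engel, local nilpotence is the Havas--Vaughan-Lee theorem \cite{HavasVaughan-Lee2005}, obtained with heavy Lie-theoretic and computational machinery; here only $x^{\pm 1}$, $(x^b)^{\pm 1}$ and their conjugates are known to be $4$-Engel, which is less, not more. (Note also that the survey states this theorem without proof, citing \cite{AbdollahiKhosravi1}, where nilpotence of $\langle a,a^b\rangle$ is established by a direct, partly machine-assisted commutator analysis, not by a solubility-plus-radical argument.)

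The class-$4$ step is also unsound as described. In the graded Lie ring of $E$ the only relations your hypotheses linearize to are $(\mathrm{ad}\,\bar x)^4=0$ and $(\mathrm{ad}\,\overline{x^b})^4=0$: conjugates $x^h$ with $h\in E$ have the same degree-one image as $x$, inverses give the same operator up to sign, and a product such as $x\cdot x^b$ is not known to be Engel (as you yourself note), so no further degree-one elements are ad-nilpotent. But two ad-nilpotent generators do not bound the class of a Lie ring: $\mathfrak{sl}_2$ is generated by $e,f$ with $(\mathrm{ad}\,e)^3=(\mathrm{ad}\,f)^3=0$, hence with $(\mathrm{ad}\,e)^4=(\mathrm{ad}\,f)^4=0$, and it is not nilpotent at all. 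Even granting that $E$ is nilpotent, your weight-$5$ collection fails on a dimension count: the weight-$5$ component of the free Lie ring on two generators is $6$-dimensional by the Witt formula, while the weight-$5$ consequences of $(\mathrm{ad}\,\bar x)^4=0$ and $(\mathrm{ad}\,\overline{x^b})^4=0$ span only the $2$-dimensional subspace generated by $[\bar y,_4\bar x]$ and $[\bar x,_4\bar y]$ (writing $\bar y=\overline{x^b}$), so commutators such as $[\bar y,\bar x,\bar y,\bar x,\bar x]$ are not killed. The bound $\gamma_5(E)=1$ genuinely requires the group-level instances of the Engel law (which is what an NQ computation enforces), not their top linearizations; as written, your final step would prove class at most $4$ for every nilpotent Lie ring on two ad-nilpotent generators, which is false.
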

\begin{thm}[Abdollahi \& Khosravi]{\rm \cite[Theorem 1.6]{AbdollahiKhosravi1}}
Let $G$ be a group. If both $a$ and $a^{-1}$ belong to $L_4(G)$ and are of  $p$-power  order
for some prime $p$, then
\begin{enumerate}
    \item If $p=2$ then $a^4\in B(G)$.
    \item If $p$ is an odd prime, then $a^p\in B(G)$.
\end{enumerate}
\end{thm}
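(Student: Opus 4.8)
The plan is to build on the immediately preceding theorem \cite[Theorem 1.5]{AbdollahiKhosravi1}, which tells us that $\langle a, a^b\rangle$ is nilpotent of class at most $4$ for every $b\in G$. Since $a$, and hence each conjugate $a^b$, is a $p$-element, each such two-generator subgroup is in fact a finite $p$-group of class at most $4$. Thus the normal closure $N=\langle a\rangle^G$ is generated by the normal set of conjugates of $a$, any two of which generate a finite $p$-group of class at most $4$. All the work will take place inside $N$, which is normal in $G$; the target is to show that the appropriate power $a^k$ generates a subgroup that is subnormal in $N$, and therefore subnormal in $G$.

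The core of the argument is to show, by induction on the order $p^m$ of $a$, that the normal closure of the chosen power, namely $\langle a^k\rangle^G$ with $k=p$ when $p$ is odd and $k=4$ when $p=2$, is soluble of bounded derived length. This mirrors the mechanism of Theorem \ref{thm:Abdollahi2004} for the $L_3$ (class $2$) situation, where passing to $p$-th powers exhibits $\langle x^p\rangle^G$ as abelian-by-$\langle x^{p^2}\rangle^G$. Here one must run the same reduction inside the class-$4$ groups $\langle a^b, a^c\rangle$: using the Hall--Petrescu collection formula one expresses a commutator of $p$-th powers of two conjugates of $a$ modulo the normal closure of a higher power, the correction terms being governed by binomial coefficients $\binom{p}{i}$ and by commutators of weight up to $4$. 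For odd $p$ these corrections are absorbed already after passing to $a^p$, so that $[\,(a^p)^b,(a^p)^c\,]$ lands in $\langle a^{p^2}\rangle^G$ and the induction on $m$ closes with the first power. For $p=2$ the coefficient $\binom{2}{2}=1$ attached to the weight-$2$ (and higher) commutators survives, and it is only after passing to $a^4=(a^2)^2$ that the analogous containment holds; this is precisely the source of the dichotomy in the statement.

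Once solubility of $M:=\langle a^k\rangle^G$ is established, the conclusion $a^k\in B(G)$ follows formally. A power of a bounded left Engel element is again a bounded left Engel element (compare Proposition \ref{co:Abdollahi2004} for the $L_3$ case), so $a^k$ is a bounded left Engel element of $M$. Since $M$ is soluble, Gruenberg's theorem \cite{Gruenberg59} that $\overline{L}(H)=B(H)$ for every soluble group $H$ gives $a^k\in B(M)$, that is, $\langle a^k\rangle$ is subnormal in $M$. As $M=\langle a^k\rangle^G$ is normal in $G$, a subgroup subnormal in $M$ is subnormal in $G$, and therefore $a^k\in B(G)$, as required.

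The hard part will be the inductive solubility computation of the middle paragraph: controlling the weight $\le 4$ commutator corrections produced by the collection formula uniformly enough that the derived length stays bounded as the induction on $|a|$ proceeds, and in particular carrying out the delicate $p=2$ analysis that forces the fourth power rather than the square. A secondary point requiring care is the verification that the relevant power of $a$ is genuinely a bounded left Engel element and that the local class-$4$ information on the two-generator subgroups $\langle a^b,a^c\rangle$ suffices to drive the global reduction; both $a\in L_4(G)$ and $a^{-1}\in L_4(G)$ are needed here, since these are exactly the hypotheses feeding the preceding theorem.
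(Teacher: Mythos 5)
The survey itself contains no proof of this theorem---it is quoted from \cite[Theorem 1.6]{AbdollahiKhosravi1}---so your proposal must stand on its own, and it does not: the entire core of the argument, the inductive solubility of $\langle a^k\rangle^G$, is deferred rather than proved, and the mechanism you propose for it appears to fail. Concretely, collecting $[u^p,v^p]$ in a two-generator group $\langle u,v\rangle$ of class at most $4$ (where $\gamma_2$ is abelian and $\gamma_4$ is central, so the collection is exact) gives
$[u^p,v^p]=[u,v]^{p^2}\,[u,v,u]^{p\binom{p}{2}}\,[u,v,v]^{p\binom{p}{2}}\,[u,v,u,u]^{p\binom{p}{3}}\,[u,v,v,v]^{p\binom{p}{3}}\,[u,v,u,v]^{\binom{p}{2}\binom{p}{2}}$.
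For $p=3$ the weight-$4$ exponent is $3\binom{3}{3}=3$, which is not divisible by $9$: these are exactly the corrections you claim are ``absorbed already after passing to $a^p$'' for every odd $p$, and at $p=3$ (a case the theorem covers) they are not. So the class-$4$ conclusion of Theorem \ref{t:He}-style two-generator control (i.e.\ of \cite[Theorem 1.5]{AbdollahiKhosravi1}) is too coarse by itself; a correct proof has to feed in the actual hypotheses $[g,a,a,a,a]=[g,a^{-1},a^{-1},a^{-1},a^{-1}]=1$, which are strictly stronger than nilpotency of $\langle a,a^b\rangle$ of class $4$. This creates a second, structural problem you do not address: your induction is on the order of $a$, so at the next stage it must be applied to $a^p$, but ``$a^p,(a^p)^{-1}\in L_4(G)$'' is not available---unlike the left $3$-Engel case, powers of left $4$-Engel elements are not known to remain left $4$-Engel. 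The only property that provably passes to powers is ``$\langle x,x^y\rangle$ nilpotent of class at most $4$ for all $y$'' (since $\langle a^p,(a^p)^y\rangle\le\langle a,a^y\rangle$), i.e.\ precisely the information just shown to be insufficient. Either the descent must be restructured, or the key containment must be proved from the weaker inherited property, and you establish neither.

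There is also a false step in your concluding paragraph, though this one is repairable. No general fact says that a power of a bounded left Engel element is again a bounded left Engel element; Proposition \ref{co:Abdollahi2004} is special to $L_3$, via the class-$2$ characterization, and the general question is open. The correct justification in your setting is: $\langle a^k,(a^k)^b\rangle\le\langle a,a^b\rangle$ is nilpotent of class at most $4$, hence $[(a^k)^{-b},_4\,a^k]=1$, and Heineken's identity (part 1 of Theorem \ref{t:He}) gives $[b,_5\,a^k]=1$ for all $b\in G$, so $a^k\in L_5(G)$. With that repair, and granting solubility of $M=\langle a^k\rangle^G$, your endgame is sound and is indeed the same as in the $L_3$ prototype (Theorem \ref{thm:Abdollahi2004}): $a^k\in L_5(M)\subseteq\overline{L}(M)=B(M)$ by Gruenberg's theorem for soluble groups \cite{Gruenberg59}, and $\langle a^k\rangle$ subnormal in $M$, which is normal in $G$, is subnormal in $G$. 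In sum, the architecture (two-generator class-$4$ control, solubility of the normal closure of the power, Gruenberg plus transitivity of subnormality) is the right one, but the proposal proves only the easy final third; the characteristic-dependent heart of the theorem---including the very dichotomy between $a^p$ for odd $p$ and $a^4$ for $p=2$---is left unestablished, and the one concrete mechanism offered for it does not work.
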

\subsection{Right Engel Elements}
In this section we discuss on right Engel elements of groups.
\begin{prop}
Let $G$ be any group and $g\in \zeta_{\omega}(G)$. Then there is an integer $n>0$ such that $\langle g,x\rangle$ is nilpotent of class at most $n$ for all $x\in G$. In particular, $\zeta_\omega(G)\subseteq \overline{R}(G)$.
\end{prop}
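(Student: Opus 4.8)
The plan is to exploit membership in a \emph{fixed} finite term of the upper central series, which is what makes the bound uniform. First I would note that since $\zeta_\omega(G)=\bigcup_{i<\omega}\zeta_i(G)$, the hypothesis $g\in\zeta_\omega(G)$ means $g\in\zeta_n(G)$ for some positive integer $n$; this single $n$ will produce the integer claimed in the statement. I would then recall the defining property that $[\zeta_i(G),G]\leq\zeta_{i-1}(G)$ for every $i\geq 1$, so that iterating the bracket with $G$ decrements the index once per step and gives $[\zeta_n(G),_n G]=1$, reaching $\zeta_0(G)=1$ after $n$ applications.

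Then I would fix an arbitrary $x\in G$, set $H=\langle g,x\rangle$, and let $N=\langle g\rangle^H$ be the normal closure of $g$ in $H$. Since $\zeta_n(G)$ is a normal subgroup of $G$ containing $g$, we have $N\leq\langle g\rangle^G\leq\zeta_n(G)$, and because $H\leq G$ this yields $[N,_n H]\leq[\zeta_n(G),_n G]=1$. On the other hand, $H/N$ is generated by the image of $x$ alone (the image of $g$ being trivial, as $g\in N$), hence it is cyclic and in particular abelian; therefore $[H,H]\leq N$.

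Combining the two facts, $[H,_{n+1}H]=\big[[H,H],_n H\big]\leq[N,_n H]=1$, so $H=\langle g,x\rangle$ is nilpotent of class at most $n+1$. Since $n$ depends only on $g$ and not on $x$, this proves the first assertion with the integer $n+1$. For the ``in particular'' clause, I would observe that for every $x$ the left-normed commutator $[g,_{n+1}x]$ is a commutator of weight $n+2$ in $H$, hence lies in $[H,_{n+1}H]=1$; thus $[g,_{n+1}x]=1$ for all $x\in G$, giving $g\in R_{n+1}(G)\subseteq\overline{R}(G)$.

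I expect no serious obstacle here: the real content is the routine identity $[\zeta_n(G),_n G]=1$ together with the observation that passing to the two-generator subgroup $H$ and its normal closure $N$ converts ``$g$ is $n$-central in $G$'' into a genuine nilpotency statement for $\langle g,x\rangle$. The one point deserving care is that the bound must be \emph{independent} of $x$, which is guaranteed because the single integer $n$ with $g\in\zeta_n(G)$ controls every choice of $x$ simultaneously; keeping the collection steps $[N,_n H]\leq[\zeta_n(G),_n G]$ and $[H,_{n+1}H]=\big[[H,H],_n H\big]$ correctly indexed is the only place where a miscount could creep in.
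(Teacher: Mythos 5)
Your proposal is correct, but it takes a recognizably different route from the paper's proof. Both arguments begin the same way: pick $n$ with $g\in\zeta_n(G)$ and observe that $H=\langle g,x\rangle$ is cyclic modulo a subgroup containing $g$. From there the paper transfers centrality from $G$ into $H$ itself, using the standard intersection lemma $H\cap\zeta_n(G)\le\zeta_n(H)$ to conclude that $H/\zeta_n(H)$ is cyclic, and then the generalization of ``cyclic over the centre implies abelian'' to get $H=\zeta_n(H)$, i.e.\ class at most $n$ and $[g,_n x]=1$. You instead keep all centrality statements inside $G$: you bound the normal closure $N=\langle g\rangle^H$ by $\zeta_n(G)$, extract $[H,H]\le N$ from the cyclic quotient $H/N$, and collapse the lower central series of $H$ directly via $[N,_n H]\le[\zeta_n(G),_n G]=1$. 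Each approach buys something: the paper's yields the sharper conclusion (class at most $n$, hence $g\in R_n(G)$), while yours is more self-contained, needing only the elementary facts $[\zeta_i(G),G]\le\zeta_{i-1}(G)$ and monotonicity of commutator subgroups, and avoiding both the intersection lemma and the cyclic-quotient lemma; the price is one extra step (class at most $n+1$, $g\in R_{n+1}(G)$), which is harmless since the statement only asks for some bound uniform in $x$, and your bookkeeping of the indices in $[H,_{n+1}H]=\big[[H,H],_n H\big]\le[N,_n H]=1$ is accurate.
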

\begin{proof}
Since $g\in \zeta_{\omega}(G)$, there exists an integer $n>0$ such that $g\in \zeta_{n}(G)$.  Now consider the factor group  $\frac{\langle g,x\rangle\zeta_n(G)}{\zeta_n(G)}$. As $g\in \zeta_n(G)$, $\frac{\langle g,x\rangle}{\langle g,x\rangle\cap \zeta_n(G)}$ is cyclic. Since  $\langle g,x\rangle\cap \zeta_n(G)$ is contained in $\zeta_n\big(\langle g,x\rangle\big)$, we have $\frac{\langle g,x\rangle}{\zeta_n\big( \langle g,x\rangle\big)}$ is cyclic. It follows that $\langle g,x\rangle=\zeta_n\big( \langle g,x\rangle\big)$ and so  $\langle g,x\rangle$ is nilpotent of class at most $n$. This easily implies that $[g,_n x]=1$ and so $g\in \overline{R}(G)$.
\end{proof}
\begin{prop}[Gruenberg]{\rm\cite[Theorem 3]{Gruenberg59}}
Let $G$ be any group. Then $\varrho(G)\subseteq R(G)$ and $\overline{\varrho}(G)\subseteq \overline{R}(G)$.
\end{prop}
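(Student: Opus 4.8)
The plan is to prove both inclusions from a single commutator observation, after reducing each to a statement about a single pair $(a,g)$. For the first inclusion, fix $a\in\varrho(G)$ and $g\in G$; I would show $[a,_n g]=1$ for some finite $n$, which is exactly what $a\in R(G)$ requires. Putting $x=g$ in the definition of $\varrho(G)$ gives that $\langle g\rangle$ is ascendant in $H:=\langle g\rangle\langle a\rangle^G$, and both $a$ (as a member of $\langle a\rangle^G$) and $g$ lie in $H$; so fix an ascending series $\langle g\rangle=H_0\trianglelefteq H_1\trianglelefteq\cdots\trianglelefteq H_\tau=H$ with $H_\lambda=\bigcup_{\beta<\lambda}H_\beta$ at each limit $\lambda$. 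For the second inclusion the set-up is identical, except that, since $a\in\overline{\varrho}(G)$, the series can be taken finite of length $n=n(a)$ with $n$ independent of $g$.

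The engine of the proof is the following observation: if $h\in H_{\gamma+1}$ then $[h,g]\in H_\gamma$. Indeed $g\in H_0\le H_\gamma$, and since $H_\gamma\trianglelefteq H_{\gamma+1}$ and $h\in H_{\gamma+1}$ we have $g^h\in H_\gamma$, whence $[g,h]=g^{-1}g^h\in H_\gamma$ and therefore $[h,g]=[g,h]^{-1}\in H_\gamma$. The crucial point is that one commutes the \emph{low} element $g$ with the higher element $h$ and invokes normality of $H_\gamma$ in $H_{\gamma+1}$; this is what lets a single bracket with $g$ push an element strictly down the series, even though the series need not be central.

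With this in hand I would run a well-founded descent. Define the height $\lambda(h)$ of $h\in H$ to be the least ordinal with $h\in H_{\lambda(h)}$; because unions are taken at limits, $\lambda(h)$ is never a limit ordinal, so it is $0$ or a successor. If $\lambda(h)=0$ then $h\in\langle g\rangle$ and $[h,g]=1$; if $\lambda(h)=\gamma+1$ then the observation gives $\lambda([h,g])\le\gamma<\lambda(h)$. Applying this with $h=[a,_k g]$ shows the ordinals $\lambda([a,_k g])$ strictly decrease as long as they are positive, so by well-ordering of the ordinals they reach $0$ after finitely many steps: $[a,_m g]\in\langle g\rangle$ for some $m$, and then $[a,_{m+1}g]=1$ because $[a,_m g]$ is a power of $g$. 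This proves $a\in R(G)$. In the bounded case the finite series of length $n=n(a)$ gives directly $[a,_i g]\in H_{n-i}$ by $i$ applications of the observation, so $[a,_n g]\in\langle g\rangle$ and $[a,_{n+1}g]=1$; as $n$ does not depend on $g$, this shows $a\in R_{n+1}(G)\subseteq\overline{R}(G)$.

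The only real content is the observation, and I expect that to be the main (indeed the sole) obstacle: for subnormal subgroups it is the familiar fact that defect $d$ forces $[H,_d\langle g\rangle]\le\langle g\rangle$ (compare the proof that $B(G)\subseteq\overline{L}(G)$), but for ascendant subgroups there is no finite defect, so the bounded-defect bookkeeping must be replaced by the transfinite descent on heights. The limit ordinals cause no difficulty precisely because heights avoid them, so the argument never has to pass through a limit stage.
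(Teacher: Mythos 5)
Your proof is correct and takes essentially the same route as the paper: the paper's compressed argument --- that otherwise, ``by examining those terms of ascending series \ldots that contain $[a,_n x]$, we should be able to find a set of ordinals without a first element'' --- is precisely your well-founded descent on heights, with your key observation (normality of $H_\gamma$ in $H_{\gamma+1}$ pushes a single bracket with $g$ down one level) serving as the engine the paper leaves implicit. Your explicit handling of the bounded case, yielding $a\in R_{n+1}(G)$ with $n$ independent of $g$, likewise just fills in what the paper dismisses as ``equally easy.''
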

\begin{proof}
   Let $a\in \varrho(G)$ and $x\in G$. Then $\langle x\rangle$ is ascendant in $\langle x\rangle\langle a\rangle^G$. Therefore $[a,_n x]=1$ for some integer $n$; for otherwise, by examining those terms of ascending series between $\langle x\rangle$ and $\langle x\rangle\langle a\rangle^G$ that contain $[a,_n x]$, we should be able to find a set of ordinals without a first element. Hence $\varrho(G)\subseteq R(G)$. It is equally easy to see that $\overline{\varrho}(G)\subseteq \overline{R}(G)$.
\end{proof}
\subsubsection{Right Engel Elements in Generalized Soluble and Linear Groups}
\begin{thm}[Gruenberg] {\rm \cite[Theorem 4]{Gruenberg59}}
Let $G$ be a soluble group. Then  $R(G)=\varrho(G)$ and $\overline{R}(G)=\overline{\varrho}(G)$.
\end{thm}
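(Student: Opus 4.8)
The plan is to invoke the preceding proposition, which already gives $\varrho(G)\subseteq R(G)$ and $\overline{\varrho}(G)\subseteq\overline{R}(G)$ for \emph{every} group $G$, and then to prove the reverse inclusions using solubility. Thus it suffices to show: if $a\in R(G)$ then $\langle x\rangle$ is ascendant in $\langle x\rangle\langle a\rangle^G$ for every $x\in G$ (so $a\in\varrho(G)$), and if $a\in\overline{R}(G)=R_n(G)$ then $\langle x\rangle$ is subnormal in $\langle x\rangle\langle a\rangle^G$ with defect bounded by a function of $n$ and the derived length (so $a\in\overline{\varrho}(G)$). Throughout I would use that $R(G)$ and $R_n(G)$ are invariant under inner automorphisms, so every conjugate $a^g$ inherits the same right Engel behaviour; this is what lets me control the whole normal closure $A:=\langle a\rangle^G$ rather than just $a$.

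First I would settle the base case in which $A$ is abelian. Writing $A_i:=\{c\in A \mid [c,_i x]=1\}$, each $A_i$ is a subgroup of $A$ (the map $c\mapsto[c,_i x]$ is an endomorphism of the abelian group $A$) and is invariant under conjugation by $x$, and $A_0\subseteq A_1\subseteq\cdots$. Since each conjugate $a^g$ lies in $R(G)$, it lies in some $A_{i}$; as these conjugates generate $A$ and $\bigcup_i A_i$ is a subgroup, we get $\bigcup_i A_i=A$. A short check shows $\langle x\rangle A_i\trianglelefteq\langle x\rangle A_{i+1}$ (conjugation by an element of $A_{i+1}$ moves $x$ into $xA_i$, and $x$ normalises each $A_i$), so
$$\langle x\rangle=\langle x\rangle A_0\trianglelefteq\langle x\rangle A_1\trianglelefteq\cdots,\qquad \textstyle\bigcup_i\langle x\rangle A_i=\langle x\rangle A,$$
is an ascending series exhibiting $\langle x\rangle$ as ascendant in $\langle x\rangle\langle a\rangle^G$. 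In the bounded case $a\in R_n(G)$ forces $A_n=A$, so this series has length $n$ and $\langle x\rangle$ is subnormal of defect at most $n$.

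Next I would induct on the derived length $d$ of $A=\langle a\rangle^G$, the case $d\le 1$ being the base case above. For $d>1$ put $N:=A^{(d-1)}$, the last nontrivial derived term; it is abelian, characteristic in $A$ and hence normal in $G$. Passing to $G/N$, the image $\overline{a}$ lies in $R(G/N)$ and has normal closure $A/N$ of derived length $d-1$, so by induction $\langle x\rangle N/N$ is ascendant in $(\langle x\rangle\langle a\rangle^G)/N$; equivalently $\langle x\rangle N$ is ascendant in $K:=\langle x\rangle\langle a\rangle^G$. Since ascendance is transitive, the theorem reduces to showing that $\langle x\rangle$ is ascendant in $\langle x\rangle N$, with $N$ abelian and normal.

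The hard part will be exactly this last reduction. By the base-case argument it is equivalent to showing that every element of $N$ is annihilated by some power of the conjugation action of $x$, i.e. is itself $x$-Engel; this is genuinely needed and not automatic, as the infinite dihedral group (where $\langle x\rangle$ fails to be subnormal in $\langle x\rangle N$) shows. The point is to propagate the right Engel condition from the generators $a^g$ of $A$ down to the iterated commutators that make up $N$: regarding $N$ as a module under conjugation by $x$ and using solubility, one runs a commutator-collection/filtration argument to show that each such commutator is killed by a power of $x$-conjugation. This module-theoretic propagation is the crux of the proof; in the bounded case one must in addition check that the resulting defect stays bounded in terms of $n$ and $d$, which is what yields $\overline{R}(G)=\overline{\varrho}(G)$.
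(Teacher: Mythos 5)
You should first be aware that the survey itself gives no proof of this theorem: it is quoted verbatim from Gruenberg's paper, so the only thing in your argument that can be checked against the paper is your use of the preceding proposition ($\varrho(G)\subseteq R(G)$ and $\overline{\varrho}(G)\subseteq\overline{R}(G)$ in any group), which is indeed the right starting point. Your abelian base case is correct and complete: for $A=\langle a\rangle^G$ abelian and normal, each $A_i=\{c\in A \mid [c,_i x]=1\}$ is an $x$-invariant subgroup (kernel of the $i$-th iterate of the endomorphism $c\mapsto c^{-1}c^x$), every conjugate $a^g$ lies in some $A_i$ because $a^g\in R(G)$, and $\langle x\rangle A_i\trianglelefteq \langle x\rangle A_{i+1}$ since $x^c=x[c,x]^{-1}$ with $[c,x]\in A_i$ whenever $c\in A_{i+1}$; the bounded version with defect at most $n$, uniform in $x$, also comes out correctly. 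The outer induction on the derived length of $A$, using transitivity of ascendance and additivity of defects, is likewise sound.

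The gap is the step you yourself call the crux, and it is not a technical loose end: it is essentially the whole theorem. To run the base-case argument inside $\langle x\rangle N$, where $N=A^{(d-1)}$, you need $N$ to be generated by elements $c$ with $[c,_k x]=1$ for some $k$; but the generators of $N$ are iterated commutators $[a^{g},a^{h}],\dots$ of conjugates of $a$, and nothing you have proved shows such commutators are $x$-Engel. This cannot follow from a formal ``commutator-collection/filtration'' argument that uses only the $x$-Engel property of the elements $a^g$: the set of elements that are Engel with respect to one fixed element $x$ is not closed under products or commutators, even inside soluble groups --- your own infinite dihedral example is of exactly this kind, and it shows the needed closure property is false without further input. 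Any correct proof must therefore inject the hypothesis that the conjugates of $a$ are right Engel with respect to \emph{every} element of $G$ (for instance with respect to elements $xc$ with $c\in A$, or with respect to each other), and your sketch provides no mechanism by which this global hypothesis enters the filtration. This is precisely where Gruenberg's original argument does its real work: it proceeds through a sequence of preparatory lemmas and leans on the left-Engel theory of soluble groups (via Heineken's $R(G)^{-1}\subseteq L(G)$ and $L(G)=HP(G)$, one at least knows $\langle a\rangle^G$ is locally nilpotent) before the ascendance of $\langle x\rangle$ can be established. As written, your proposal proves the theorem only in the case where $\langle a\rangle^G$ is abelian.
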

The standard
wreath product of a cyclic group of prime order $p$ by an elementary abelian $p$-group
of infinite rank is an example of a soluble group with trivial hypercenter, but in which
all elements are right $p+1$-Engel. Replacing the elementary abelian $p$-group
in the wreath product by a quasicyclic $p$-group gives a soluble  group $G$ with
$\overline{\zeta}(G)=\overline{R}(G)=1$ but satisfying $G=R(G)$. Gruenberg \cite[Theorem 1.7]{Gruenberg59} showed that $\overline{R}(G)=\zeta_\omega(G)$ when $G$ is a finitely
generated  soluble group. Brookes \cite{Brookes86} proved that all four subsets are the same in a finitely generated soluble group.
\begin{thm}[Brookes] {\rm \cite[Theorem A]{Brookes86}}\label{thm:Brookes} Let $G$ be a finitely generated soluble group. Then
$$R(G)= \overline{\zeta}(G)=\overline{R}(G)= \zeta_\omega(G).$$
\end{thm}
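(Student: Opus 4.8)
The plan is first to collect the inclusions already available in the paper and so isolate the single hard step. In any group $\zeta_\omega(G)\le\overline\zeta(G)$, and by Gruenberg's Theorem~3 (\cite{Gruenberg59}) $\overline\zeta(G)\le\varrho(G)\subseteq R(G)$; moreover $\overline R(G)\subseteq R(G)$ holds always. Finally, for a finitely generated soluble group Gruenberg proved $\overline R(G)=\zeta_\omega(G)$ (\cite[Theorem~1.7]{Gruenberg59}). Chaining these gives $\zeta_\omega(G)\le\overline\zeta(G)\subseteq R(G)$, so the four sets coincide as soon as one establishes the reverse inclusion $R(G)\subseteq\zeta_\omega(G)$; equivalently, in view of $\overline R(G)=\zeta_\omega(G)$, it suffices to prove $R(G)\subseteq\overline R(G)$, i.e. that in a finitely generated soluble group every right Engel element is already a \emph{bounded} right Engel element. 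Thus the whole theorem reduces to a uniform bound on Engel degrees, which is exactly the content Brookes adds to Gruenberg's results.

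\textbf{Induction and passage to a module.} I would prove $R(G)\subseteq\zeta_\omega(G)$ by induction on the derived length $d$ of $G$, assuming the full theorem for finitely generated soluble groups of smaller derived length; the case $d=1$ is trivial. Let $A=G^{(d-1)}$ be the last nontrivial derived term, an abelian normal subgroup with $G/A$ of derived length $d-1$. Fix $a\in R(G)$. The right Engel condition passes to quotients, so $\bar a\in R(G/A)$, and by induction $R(G/A)=\zeta_\omega(G/A)$; hence $\bar a\in\zeta_k(G/A)$ for some finite $k$, which means $[a,_k x]\in A$ for every $x\in G$. For $n\ge k$ the tail $[a,_n x]$ therefore lies in the abelian group $A$, and writing $A$ additively as a module over $\mathbb{Z}[G]$ via conjugation one has $[a,_{k+j}x]=(x-1)^{j}\,[a,_k x]$. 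Consequently $a\in R(G)$ says precisely that for each $x$ the element $b_x:=[a,_k x]$ is annihilated by some power $(x-1)^{m(x)}$, while $a\in\overline R(G)$ is the assertion that a single exponent $N$ works for all $x$ simultaneously.

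\textbf{The core difficulty.} The substance of the theorem is thus the uniformity: pointwise nilpotence of the operators $x-1$ on the commutator values in $A$ must be upgraded to nilpotence of uniformly bounded index. In the metabelian case $d=2$ this follows cleanly from Noetherian module theory: $Q=G/A$ is finitely generated abelian, $\mathbb{Z}[Q]$ is a commutative Noetherian ring, and by a theorem of P.~Hall $A$ is a finitely generated, hence Noetherian, $\mathbb{Z}[Q]$-module, which forces a uniform annihilator exponent on the finitely generated submodule carrying the values $b_x$. The main obstacle is the general inductive step, because a finitely generated soluble group need not satisfy the maximal condition on normal subgroups, so $A$ need not be a Noetherian module and Hall's theorem is unavailable. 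Here I would instead attach a suitable finitely generated commutative coefficient ring to an abelian section, localise at its prime ideals, pass to the Noetherian sections that control $A$, and run a dimension / generic-flatness argument to bound, independently of $x$, the nilpotency index of $x-1$ on the commutator module. Controlling this failure of Noetherianity, and thereby excluding the possibility that the per-element Engel degrees $m(x)$ grow without bound, is the step I expect to be genuinely hard and is the real content of Brookes' theorem.
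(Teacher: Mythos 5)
Your opening reduction is sound and matches the paper's own framing: combining $\zeta_\omega(G)\leq\overline{\zeta}(G)\leq\varrho(G)\subseteq R(G)$ with Gruenberg's theorem that $\overline{R}(G)=\zeta_\omega(G)$ for finitely generated soluble $G$, everything indeed hinges on proving $R(G)\subseteq\overline{R}(G)$. But from that point on there is a genuine gap, and you flag it yourself: the uniformity statement that is the entire content of Brookes' theorem is neither proved nor even reduced to a precisely stated, citable result. ``Attach a coefficient ring, localise at primes, run a dimension / generic-flatness argument'' is a description of a hoped-for strategy, not an argument; nothing in the proposal says what property of the $\mathbb{Z}G$-module $A$ would be fed into such machinery, or why pointwise nilpotence of the operators $x-1$ would survive it uniformly in $x$. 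Even your metabelian warm-up is glossed: Hall's theorem does make $A$ a Noetherian $\mathbb{Z}[Q]$-module, so the submodule generated by the values $b_x$ is finitely generated, but this does not by itself ``force a uniform annihilator exponent,'' because the annihilating operators $(x-1)^{m(x)}$ vary with $x$; extracting a single $N$ valid for all $x$ needs a further argument (a prime filtration together with an integrality/domain argument), and this case alone is essentially Gruenberg's theorem in \cite{Gruenberg61}, not a routine consequence of Noetherianity.

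What is missing, concretely, is the notion the paper isolates as the key to Brookes' proof: constrained (restrained) modules. The paper's route packages the right Engel condition, via Lemma~\ref{lem:x^y}, into the statement that every abelian section $M=H/K$ with $H=\langle H\cap R(G)\rangle$ is a constrained $\mathbb{Z}G$-module --- for every $m\in M$ and every $g\in G$ the abelian group $m\mathbb{Z}\langle g\rangle$ is finitely generated --- and then invokes Brookes' theorem on constrained modules over integer group rings \cite{Brookes86} to get the uniform bound past the failure of max-$n$ at derived length three. Your formulation retains only the annihilation of the particular elements $b_x$ by powers of the particular operators $x-1$, which is exactly the unstructured, pointwise information that cannot be pushed through an induction once Noetherianity fails; the constrained condition, by contrast, holds for all elements of the section and all $g\in G$ and is stable under passing to submodules and quotients, which is what makes the inductive module theory work. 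To complete your proposal you would need to formulate and prove (or correctly cite) that constrained-module theorem; as written, the proposal locates the difficulty accurately but does not overcome it.
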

The proof of Theorem \ref{thm:Brookes} follows from a result on constrained modules over integer group rings: For a group $G$ and a
commutative Noetherian ring $R$, an $RG$-module $M$ is said to be constrained if for all $m\in M$ and $g\in G$ the $R$-module $mR\langle g\rangle$ is finitely generated as an $R$-module. Let us explain how such modules are appeared in the study of right Engel elements. Let $G$ be any group, $H$ a normal subgroup of $G$ and $K$ a normal subgroup of $H$ such that $M=H/K$ is abelian and suppose further that $H=\langle H\cap R(G)\rangle$. Then $G$ acts by conjugation on $M$ as a group and so $M$ can be considered as a $\mathbb{Z}G$-module. Now we prove that $M$ is a restrained $\mathbb{Z}G$-module. We need the following technical and useful result to show our claim as well as in the sequel.
\begin{lemma}\label{lem:x^y}
Let $x,y$ be elements of a group $G$.
\begin{enumerate}
\item For each integer $k\geq 0$, there exist elements $g_k(x,y),f_k(x,y),h_k(x,y)\in G$ such that
$$[x,_ky]=g_k(x,y)x^{(-1)^k}h_k(x,y)=f_k(x,y)x^{y^k},$$ and $$g_k(x,y)\in\langle x^y,\dots,x^{y^{k-1}}\rangle, \;\; h_k(x,y)\in \langle x^y,\dots,x^{y^{k-1}},x^{y^k}\rangle$$ and $f_k(x,y)\in \langle x,x^y,\dots,x^{y^{k-1}}\rangle$.
\item If $[x,_n y]=1$, then $$\langle x\rangle^{\langle y\rangle}=\langle x,[x,y],\dots,[x,_{n-1}y] \rangle=\langle x,x^y,\dots,x^{y^{n-1}}\rangle.$$
\end{enumerate}
\end{lemma}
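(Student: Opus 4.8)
For elements $x,y$ of a group $G$:
(1) for each $k\geq 0$ there exist $g_k,h_k,f_k\in G$ with $[x,_ky]=g_k x^{(-1)^k}h_k=f_k x^{y^k}$, where $g_k,h_k$ lie in the subgroup generated by the conjugates $x^{y},\dots,x^{y^{k-1}}$ (and $h_k$ also uses $x^{y^k}$) and $f_k\in\langle x,x^y,\dots,x^{y^{k-1}}\rangle$; and (2) if $[x,_ny]=1$ then $\langle x\rangle^{\langle y\rangle}=\langle x,x^y,\dots,x^{y^{n-1}}\rangle$.

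The plan is to prove part (1) by induction on $k$ and then deduce part (2) as a formal consequence. For the base case $k=0$ one has $[x,_0y]=x$, so taking $g_0=h_0=f_0=1$ works, with the convention that an empty product generates the trivial subgroup. For the inductive step I would use the defining recursion $[x,_{k+1}y]=[[x,_ky],y]=[x,_ky]^{-1}\,[x,_ky]^{y}$. The key computational idea is that conjugation by $y$ sends each generator $x^{y^i}$ to $x^{y^{i+1}}$, so if $[x,_ky]$ is expressed as a word in $x^{y^0},\dots,x^{y^k}$, then $[x,_ky]^y$ is the same word shifted up by one exponent, i.e.\ a word in $x^{y^1},\dots,x^{y^{k+1}}$. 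Substituting the inductive expression $[x,_ky]=f_k x^{y^k}$ into $[x,_ky]^{-1}[x,_ky]^y$ and collecting terms should produce the shape $f_{k+1}x^{y^{k+1}}$, where the prefix $f_{k+1}$ is assembled from $f_k$, $f_k^{\,y}$ and $x^{y^k}$, all of which lie in $\langle x,x^y,\dots,x^{y^k}\rangle$; tracking the leftmost and rightmost surviving $x$-power gives the $g_{k+1}x^{(-1)^{k+1}}h_{k+1}$ form simultaneously, with the sign $(-1)^{k+1}$ arising because the single uncancelled $x$-power flips orientation at each commutation step.

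To obtain part (2), I would argue by double inclusion. The inclusion $\langle x,x^y,\dots,x^{y^{n-1}}\rangle\supseteq$ the left-normed commutators is immediate from the first displayed identity in part (1): each $[x,_iy]$ with $i<n$ equals $f_i x^{y^i}$, a word in $x,x^y,\dots,x^{y^i}$, so all of $x,[x,y],\dots,[x,_{n-1}y]$ lie in $\langle x,x^y,\dots,x^{y^{n-1}}\rangle$, and conversely inverting the triangular relations $[x,_iy]=f_i x^{y^i}$ expresses each $x^{y^i}$ in terms of the commutators, giving equality of the two generating sets. The substantive point is that these generators actually form a $\langle y\rangle$-invariant set once $[x,_ny]=1$: conjugating the generator $x^{y^{n-1}}$ by $y$ yields $x^{y^n}$, and the hypothesis $[x,_ny]=1$ together with part (1) (applied at $k=n$, where $x^{(-1)^n}$ becomes expressible through lower conjugates) shows $x^{y^n}\in\langle x,x^y,\dots,x^{y^{n-1}}\rangle$, so the subgroup is closed under conjugation by $y$ and hence equals the normal closure $\langle x\rangle^{\langle y\rangle}$.

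The main obstacle I anticipate is the bookkeeping in the inductive step of part (1): keeping precise control of which conjugates $x^{y^i}$ appear in each of $g_k$, $h_k$, $f_k$ while simultaneously verifying the sign $(-1)^k$ on the isolated $x$-power. Because three interlocking factorizations must be maintained at once, the cleanest route is probably to prove the $f_k x^{y^k}$ factorization first in isolation (it has the simplest recursion under the shift map), and only afterwards read off the two-sided $g_k x^{(-1)^k}h_k$ form by identifying the leftmost generator-prefix and the trailing power; this avoids carrying all three descriptions through every line of the induction.
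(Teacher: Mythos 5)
Your plan for part (1) coincides with the paper's own proof: induct on $k$ using the recursion $[x,_{k+1}y]=[x,_ky]^{-1}[x,_ky]^{y}$ and the fact that conjugation by $y$ shifts $x^{y^i}$ to $x^{y^{i+1}}$. That part is sound (one quibble: the two-sided form $g_kx^{(-1)^k}h_k$ cannot really be ``read off'' afterwards from the $f_k x^{y^k}$ form, since it requires tracking where the single uncancelled bare power of $x$ sits inside $f_k$; it is cleaner to carry it through the same induction, where it propagates easily via $g_{k+1}=h_k^{-1}$ and $h_{k+1}=g_k^{-1}g_k^{y}(x^{y})^{(-1)^k}h_k^{y}$).

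The genuine gap is in part (2). You verify $K^{y}\subseteq K$ for $K=\langle x,x^y,\dots,x^{y^{n-1}}\rangle$ (correctly: $1=[x,_ny]=f_nx^{y^n}$ gives $x^{y^n}=f_n^{-1}\in K$, though note this uses the $f$-form, not the $g,h$-form as you state), and then conclude that $K$ ``is closed under conjugation by $y$ and hence equals the normal closure $\langle x\rangle^{\langle y\rangle}$.'' That inference is invalid when $y$ has infinite order: $\langle x\rangle^{\langle y\rangle}$ is generated by \emph{all} $x^{y^i}$ with $i\in\mathbb{Z}$, and a subgroup containing $x$ with $K^y\subseteq K$ only accounts for $i\geq 0$. (In the free group on $x,y$, the subgroup $\langle x^{y^i}\;:\;i\geq 0\rangle$ is closed under conjugation by $y$ yet is a proper subgroup of the normal closure; so the hypothesis $[x,_ny]=1$ must be invoked a second time to handle $y^{-1}$.) The missing step — and the entire reason the lemma bothers with the two-sided factorization $g_kx^{(-1)^k}h_k$ with $g_k,h_k$ lying in the subgroup generated by the \emph{strictly positive} conjugates $x^y,\dots$ — is to show $x^{y^{-1}}\in K$: from $1=[x,_ny]=g_nx^{(-1)^n}h_n$ one gets $x=\bigl(g_n^{-1}h_n^{-1}\bigr)^{(-1)^n}\in\langle x^y,\dots,x^{y^n}\rangle$, an expression for $x$ involving no conjugate $x^{y^i}$ with $i\leq 0$, and conjugating this relation by $y^{-1}$ yields $x^{y^{-1}}\in\langle x,x^y,\dots,x^{y^{n-1}}\rangle=K$. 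Then $K^{y^{-1}}\subseteq K$ as well, $K$ is genuinely $\langle y\rangle$-invariant, and the conclusion follows. (Your triangular-inversion argument that $\langle x,[x,y],\dots,[x,_{n-1}y]\rangle=K$ is fine and legitimately replaces the paper's separate backward-induction check that the commutator-generated subgroup is $y^{-1}$-invariant.)
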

\begin{proof}
(1) \; Using $[x,_{k+1} y]=[x,_k y]^{-1}[x,_k y]^{y}$, the proof follows from an easy induction on $k$.\\
(2) \; Let $H=\langle x,[x,y],\dots,[x,_{n-1}y] \rangle$ and $K=\langle x,x^y,\dots,x^{y^{n-1}}\rangle$. Since $x$ belongs to both $H$ and $K$ and they are
 contained in $\langle x\rangle ^{\langle y\rangle}$, it is enough to show that both of $H$ and $K$ are normal subgroups of $\langle x,y\rangle$.
 To prove the latter, it is sufficient to show that
 $[x,_ky]^y,[x,_k y]^{y^{-1}}\in H$ for all $k\in\{0,1,\dots,n-1\}$ and $x^{y^{-1}},x^{y^{n}} \in K$.\\
 We first show the former. Since $[x,_k y]^y=[x,_k y][x,_{k+1}y]$ and $[x,_ny]=1$, $[x,_k y]^y \in H$ for all $k\in\{0,\dots,n-1\}$.
 Now  $[x,_{n-1} y]^{y^{-1}}=[x,_{n-1}y] \in H$ and assume by  a backward induction on $K$ that $[x,_k y]^{y^{-1}}\in H$ for $k<n-1$. Then, as
 $[x,_{k-1} y]^{y^{-1}}=[x,_{k-1} y][x,_k y]^{-y^{-1}}$, we have $[x,_{k-1} y]^{y^{-1}}\in H$. This shows $H$ is also invariant under conjugation of $y^{-1}$.\\
 Now we prove $x^{y^{-1}},x^{y^{n}} \in K$. From part (1) and $[x,_n y]=1$, it follows that $x^{y^n}=f_n(x,y)^{-1}\in K$ and
 $x=\big(g_n(x,y)^{-1}h_n(x,y)^{-1}\big)^{(-1)^n}\in \langle x^y,\dots,x^{y^n}\rangle$. By conjugation of $y^{-1}$, it now follows from the latter that $x^{y^{-1}}\in K$. This completes the proof.
\end{proof}
This lemma is very useful to study Engel groups. We do not know where is its origin and who first noted, however part 2 of Lemma \ref{lem:x^y} has already appeared as Exercise 12.3.6 of the first edition of \cite{Robinson96} published in 1982 and   Rhemtulla and Kim \cite{RhemtullaKim95} groups $G$ having the property that $\langle x\rangle^{\langle y\rangle}$ is finitely generated for all $x,y\in G$  called restrained groups and if there is a bound on the number of generators of such subgroups, they called $G$ strongly restrained. \\
Now we can prove our claim.
\begin{prop}
Let $G$ be any group, $H$ a normal subgroup of $G$ and $K$ a normal subgroup of $H$  such that $M=H/K$ is abelian and suppose further that $H=\langle H\cap R(G)\rangle$. Then $M$ is a restrained $\mathbb{Z}G$-module.
\end{prop}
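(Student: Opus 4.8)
The plan is to verify directly the defining condition for a restrained module: for every $m\in M$ and every $g\in G$, the $\mathbb{Z}$-module $m\mathbb{Z}\langle g\rangle$ generated by the conjugates $\{m^{g^i} : i\in\mathbb{Z}\}$ must be finitely generated. Since $M=H/K$ is abelian and $G$ acts on it by conjugation, this action is $\mathbb{Z}$-linear, and because $H=\langle H\cap R(G)\rangle$ every element of $M$ is a $\mathbb{Z}$-combination of images of elements of $H\cap R(G)$. The first step would therefore be to reduce the claim to the case where $m$ is the image $\bar a=aK$ of a single right Engel element $a\in H\cap R(G)$; additivity then lets the general case follow from the single-generator case.

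The heart of the argument is the single-element case, and here the key tool is Lemma \ref{lem:x^y}. Given $a\in R(G)$ and $g\in G$, the definition of a right Engel element supplies an integer $n=n(g,a)$ with $[a,_n g]=1$. Part (2) of Lemma \ref{lem:x^y}, applied with $x=a$ and $y=g$, then yields
$$\langle a\rangle^{\langle g\rangle}=\langle a,a^g,\dots,a^{g^{n-1}}\rangle,$$
so the normal closure of $\langle a\rangle$ in $\langle g\rangle$ is generated by the finitely many conjugates $a,a^g,\dots,a^{g^{n-1}}$. Passing to the abelian quotient $M=H/K$, the image of this subgroup is the $\mathbb{Z}$-span of $\bar a,\bar a^{g},\dots,\bar a^{g^{n-1}}$, and every conjugate $\bar a^{g^i}$ with $i\in\mathbb{Z}$ lies in this span. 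Hence $\bar a\mathbb{Z}\langle g\rangle$ is finitely generated, being generated by those $n$ elements.

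Finally I would assemble the general case. Writing $m=\bar h$ with $h=a_1^{\pm1}\cdots a_r^{\pm1}$ a product of elements of $H\cap R(G)$ and their inverses, linearity of the conjugation action gives $m^{g^i}=\sum_{j}\pm\bar a_j^{g^i}$ for every $i$, whence
$$m\mathbb{Z}\langle g\rangle\subseteq \sum_{j=1}^{r}\bar a_j\mathbb{Z}\langle g\rangle,$$
a sum of finitely many finitely generated $\mathbb{Z}$-submodules, hence itself finitely generated. Since $\mathbb{Z}$ is Noetherian, the submodule $m\mathbb{Z}\langle g\rangle$ of this finitely generated module is again finitely generated, which is exactly the restrained condition.

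The main point to watch is the passage to the quotient in the second step: one must check that the group-theoretic generation of $\langle a\rangle^{\langle g\rangle}$ by $a,a^g,\dots,a^{g^{n-1}}$ survives as $\mathbb{Z}$-linear generation modulo $K$. This is immediate because $K\supseteq[H,H]$ makes $M$ abelian, so the image of a product of conjugates is the sum of their images. One should also keep in mind that $n$ depends on both $g$ and $a$, so no uniform bound is claimed (consistent with the unbounded notion $R(G)$ rather than $\overline{R}(G)$); this is why the conclusion is merely that each $m\mathbb{Z}\langle g\rangle$ is finitely generated, with no control on the number of generators.
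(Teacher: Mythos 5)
Your proposal is correct and follows essentially the same route as the paper's own proof: both reduce to writing $m$ as the image of a product of elements of $H\cap R(G)$, apply part 2 of Lemma \ref{lem:x^y} to see that each $\langle x_i\rangle^{\langle g\rangle}$ is finitely generated, and then conclude by observing that $m\mathbb{Z}\langle g\rangle$ sits inside a finitely generated abelian group (the paper's $L$, your sum $\sum_j \bar a_j\mathbb{Z}\langle g\rangle$), whose subgroups are again finitely generated. The only difference is organizational: the paper phrases the containment directly at the level of subgroups $S\leq L$ of $M$, while you separate out the single-generator case first.
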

\begin{proof}
We have to prove that $$S=\frac{\langle x_1^{k_1}\cdots x_n^{k_1}\rangle^{\langle g\rangle} K}{K}$$ is a finitely generated abelian group for any $x_1,\dots,x_n\in R(G)\cap H$, $k_1,\dots,k_n\in \mathbb{Z}$ and any $g\in G$.
Clearly $S$ is a subgroup of  $$L=\frac{\langle x_1,\dots ,x_n\rangle^{\langle g\rangle} K}{K}.$$ Now since $x_i\in R(G)$, part 2 of Lemma \ref{lem:x^y} implies that $\langle x_i\rangle^{\langle g\rangle}$ is finitely generated for each $i$ and so $L$ is a finitely generated abelian group. Thus $S$ is also finitely generated. This completes the proof.
\end{proof}
\begin{thm}[Robinson] {\rm \cite[Theorem 7.34]{Robinson72-2}}
Let $G$ be a radical group. Then $R(G)$ is a locally nilpotent subgroup of $G$. Furthermore, $R(G)=\varrho(G)$ if and only if $R(G)$ is a Gruenberg group.
\end{thm}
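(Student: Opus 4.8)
\section*{Proof proposal}

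The plan is to treat the three assertions in turn, deducing local nilpotence from the subgroup property and isolating the role of the Gruenberg hypothesis. The first move is to establish the containment $R(G)\subseteq HP(G)$: by Heineken's theorem (Theorem \ref{t:He}(2)) we have $R(G)^{-1}\subseteq L(G)$, and since $G$ is radical, Plotkin's theorem above gives $L(G)=HP(G)$. As $HP(G)$ is a subgroup, $R(G)^{-1}\subseteq HP(G)$ forces $R(G)\subseteq HP(G)$. Thus $R(G)$ sits inside a locally nilpotent normal subgroup, and once it is shown to be a subgroup it is automatically locally nilpotent, since subgroups of locally nilpotent groups are locally nilpotent. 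This reduces the first assertion to proving that $R(G)$ is closed under products, i.e. that $\langle R(G)\rangle\subseteq R(G)$.

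To prove this, fix $w=a_1\cdots a_k$ with $a_i\in R(G)$ and fix $g\in G$, and choose $m$ with $[a_i,_m g]=1$ for all $i$. By Lemma \ref{lem:x^y}(2) each $\langle a_i\rangle^{\langle g\rangle}$ is finitely generated, so the $\langle g\rangle$-invariant subgroup $V=\langle a_i^{g^j}:1\le i\le k,\ 0\le j<m\rangle$ is finitely generated; being contained in $HP(G)$ it is nilpotent, and $w\in V$. The heart of the matter is the purely nilpotent-group statement that if conjugation by $g$ satisfies $[v,_m g]=1$ on a generating set of the nilpotent group $V$, then $[v,_n g]=1$ for all $v\in V$ and some $n$. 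I would prove this by passing to the associated graded Lie ring of $V$ and checking, via the twisted Leibniz identity for the operator $v\mapsto[v,g]$, that this operator acts nilpotently on every homogeneous component once it does so on the degree-one generators; this is the main obstacle of this part, although it is a standard Engel-action computation. Granting it, $[w,_n g]=1$, and as $g$ was arbitrary $w\in R(G)$. Hence $R(G)=\langle R(G)\rangle$ is a subgroup; being aut-invariant it is characteristic, hence normal in $G$.

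For the equivalence, recall $\varrho(G)\subseteq R(G)$ always, so only one inclusion is ever at issue. For the forward direction assume $R(G)=\varrho(G)$ and take $a\in R(G)$. Applying the defining property of $\varrho(G)$ with $x=a$ shows that $\langle a\rangle$ is ascendant in $\langle a\rangle\langle a\rangle^G=\langle a\rangle^G$; since $\langle a\rangle^G$ is normal in $G$ and contained in the subgroup $R(G)$, it is normal in $R(G)$, and transitivity of ascendance yields that $\langle a\rangle$ is ascendant in $R(G)$. As $a$ was arbitrary, every cyclic subgroup of $R(G)$ is ascendant in $R(G)$, that is, $R(G)$ is a Gruenberg group.

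For the reverse direction assume $R(G)$ is a Gruenberg group; I must show $R(G)\subseteq\varrho(G)$. Fix $a\in R(G)$ and $x\in G$, put $A=\langle a\rangle^G$ (so $A\subseteq R(G)$, as $R(G)$ is a normal subgroup) and $H=\langle x\rangle A=A\langle x\rangle$; the goal is that $\langle x\rangle$ is ascendant in $H$. Two facts are available. First, a direct computation gives $[cx^k,_n x]=[c,_n x]^{x^k}$ for $c\in A$, so every element of $H$ is killed by repeated commutation with $x$ and hence $x\in L(H)$. Second, since ascendance is inherited by the normal subgroup $A$ of the Gruenberg group $R(G)$, the group $A$ is itself Gruenberg, so $H$ is radical. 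I would then build an ascending $H$-series from $\langle x\rangle$ up to $H$: when $A$ is abelian this is explicit, using the $\langle x\rangle$-invariant subgroups $A_i=\{c\in A:[c,_i x]=1\}$, which ascend with union $A$ and satisfy $\langle x\rangle A_i\trianglelefteq\langle x\rangle A_{i+1}$, and the general case lifts this through an ascending $H$-invariant abelian series of the Gruenberg group $A$. Producing and splicing this series is the main obstacle of the whole proof, and it is exactly here that the Gruenberg hypothesis is indispensable: without it $x$ lies only in $HP(H)$, and mere local nilpotence of $H$ does not force $\langle x\rangle$ to be ascendant.
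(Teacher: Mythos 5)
Since the paper states this theorem only as a citation of Robinson's book and gives no proof of its own, your argument has to be judged on its own merits rather than against an in-paper proof. Most of it holds up. The reduction $R(G)\subseteq HP(G)$ via Theorem \ref{t:He} and Plotkin's theorem is correct; the closure argument with the finitely generated $\langle g\rangle$-invariant nilpotent subgroup $V$ is sound, and the Engel-action fact you isolate is true and standard. (You could in fact avoid the graded Lie ring computation: $V$ is finitely generated nilpotent, so $V\langle g\rangle$ is polycyclic and satisfies the maximal condition, its subgroup $V$ is generated by elements of $R(G)$, which remain right Engel in $V\langle g\rangle$, and Peng's theorem, Theorem \ref{Peng66}, then puts $V$ inside the hypercenter of $V\langle g\rangle$, so $[w,_n g]=1$ for some $n$.) The forward direction of the equivalence is correct, and so is your explicit abelian case built from the subgroups $A_i=\{c\in A:\ [c,_i x]=1\}$, together with the splicing of the resulting series.

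The genuine gap is the single sentence carrying the reverse direction: ``the general case lifts this through an ascending $H$-invariant abelian series of the Gruenberg group $A$.'' No such series is shown to exist, and its existence does not follow from $A$ being a Gruenberg group. A Gruenberg group can be perfect and characteristically simple --- McLain's group $M(\mathbb{Q},\mathbb{F}_p)$ is even a Fitting group with both properties --- so it admits no ascending \emph{characteristic} abelian series at all; it does admit ascending series of normal abelian subgroups (Fitting groups are hyperabelian), but these are highly non-canonical, and nothing forces any of them to be invariant under the outer action of $x$, which is exactly what your lifting argument needs. Your abelian case succeeds precisely because the Engel condition hands you canonical $x$-invariant subgroups, namely the kernels of powers of the endomorphism $c\mapsto[c,x]$ of the abelian group $A$; once $A$ is nonabelian these sets are no longer subgroups, and you offer no substitute. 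So what the proposal actually proves is the theorem under the extra hypothesis that $\langle a\rangle^G$ possesses an ascending $G$-invariant abelian series --- essentially the situation of the $SN^*$-group corollary recorded after this theorem in the paper --- whereas the passage from ``$R(G)$ is a Gruenberg group'' to the ascendance of $\langle x\rangle$ in $\langle x\rangle\langle a\rangle^G$, which is the real content of Robinson's theorem, remains unproved.
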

A group $G$ is called an $SN^*$-group, if $G$ admits an ascending series whose factors are abelian.
\begin{co}[Robinson] {\rm \cite[Corollary 1, p. 60 of Part II]{Robinson72-2}}
If $G$ is an $SN^*$-group, then $R(G)=\varrho(G)$.
\end{co}
\begin{co}[Robinson] {\rm \cite[Corollary 2, p. 60 of Part II]{Robinson72-2}}
In an arbitrary group the right Engel elements that lie in the final term of the upper Hirsch-Plotkin series from a subgroup.
\end{co}
\begin{qu}[Robinson]{\rm \cite[p. 63 of Part II]{Robinson72-2}}
Let $G$ be an $SN^*$-group. Is it true that $\overline{R}(G)=\overline{\varrho}(G)$?
\end{qu}
\begin{thm}[Gruenberg] {\rm \cite{Gruenberg66}} Let $R$ be a commutative Noetherian ring with identity and $G$ be a group of $R$-automorphisms of a finitely
generated $R$-module. Then $R(G)=\overline{\zeta}(G)$ and $\overline{R}(G)=\zeta_\omega(G)$.
\end{thm}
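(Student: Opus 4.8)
The plan is to split each claimed equality into two inclusions and to dispose of one of them for free. By the results quoted just above, $\overline{\zeta}(G)\le\varrho(G)$ and $\varrho(G)\subseteq R(G)$, so $\overline{\zeta}(G)\subseteq R(G)$; likewise the earlier proposition gives $\zeta_\omega(G)\subseteq\overline{R}(G)$. Hence the entire content is the pair of reverse inclusions
\[
R(G)\subseteq\overline{\zeta}(G)\qquad\text{and}\qquad\overline{R}(G)\subseteq\zeta_\omega(G),
\]
that is: a right Engel element of such a $G$ lies in the hypercentre, and a bounded one lies in the $\omega$-centre. A first, crude, localisation is already available from Heineken's Theorem \ref{t:He} together with the companion ``left'' theorem $L(G)=HP(G)$, $\overline{L}(G)=B(G)$ for this very class: since $R(G)^{-1}\subseteq L(G)$ and $\overline{R}(G)^{-1}\subseteq\overline{L}(G)$, we get $R(G)\subseteq HP(G)$ and $\overline{R}(G)\subseteq B(G)$. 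The real work is to climb from ``(locally) nilpotent'' to ``hypercentral''.

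The main device is a reduction to linear groups over fields. As $R$ is Noetherian and $M$ is finitely generated, $M$ admits a finite generating set of size $d$, and for each maximal ideal $\mathfrak{m}$ the semisimple layer $\bar M_{\mathfrak m}=M_{\mathfrak m}/\mathfrak{m}M_{\mathfrak m}$ is a vector space of dimension at most $d$ over the residue field $R/\mathfrak m$ on which $G$ acts $R/\mathfrak m$-linearly. This produces homomorphisms $G\to G_{\mathfrak m}\le GL_d(R/\mathfrak m)$ of \emph{uniformly bounded degree} $d$, and this uniform bound is exactly what distinguishes the two assertions. The field case (next paragraph) will place the image of any right Engel element in a term $\zeta_{m_{\mathfrak m}}(G_{\mathfrak m})$ with $m_{\mathfrak m}\le d$; hence a \emph{bounded} right Engel element is pushed uniformly into $\zeta_\omega$, whereas an unbounded one is controlled only layer by layer and may drive the upper central series of $G$ transfinitely upward --- to some $\zeta_{n\omega}$ --- which is precisely the extra room that $\overline{\zeta}(G)$, but not $\zeta_\omega(G)$, provides.

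The crux is therefore the field case $G\le GL_d(F)$ with $V:=M$ finite-dimensional; one may extend scalars and assume $F$ algebraically closed. Fix a $G$-composition series $0=V_0<V_1<\dots<V_n=V$ with $n\le d$. On an irreducible section $V_i/V_{i-1}$ the induced element $\bar a$ is a right Engel element of the irreducible group $\overline{G}\le GL(V_i/V_{i-1})$; as $\bar a^{-1}\in L(\overline{G})=HP(\overline{G})$, the normal closure $\langle\bar a\rangle^{\overline{G}}$ is a locally nilpotent normal subgroup of an irreducible linear group, so by the Clifford--Schur argument (its isotypic decomposition is $\overline{G}$-invariant) it consists of scalars; thus $\bar a$ is scalar on every composition factor. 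Consequently every commutator $[a,g]$ acts trivially on all factors, i.e.\ $[G,a]$ lies in the stability group $U$ of the flag, which by the Kaluzhnin--Hall theorem is a nilpotent normal subgroup. Being a right Engel element, $a$ is moreover forced by the conditions $[a,_k g]=1$ (via Lemma \ref{lem:x^y}, which makes $\langle a\rangle^{\langle g\rangle}$ finitely generated and controls its eigenstructure relative to the semisimple part of $g$) to have its unipotent contribution annihilated in every direction in which $g$ acts non-trivially; accumulating this over all $g\in G$ collapses $a$ into a finite term $\zeta_{m}(G)$ with $m\le d$. Hence in the field case $R(G)=\overline{R}(G)=\zeta_m(G)$ with $m\le d$.

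It remains to reassemble the fields, and here lies the genuine obstacle: the family of residue-field reductions is \emph{not} faithful, its kernel being controlled by the Jacobson radical of the relevant quotient of $R$. One repairs this with the $\mathfrak m$-adic filtration $M\supseteq\mathfrak m M\supseteq\mathfrak m^2M\supseteq\cdots$, whose successive quotients are again bounded-length modules over residue fields and along which the elements invisible to the residue fields act unipotently; it is precisely the (at most countable) climb through this filtration that lifts ``central in each semisimple layer'' to ``hypercentral in $G$'', keeping the transfinite height within $n\omega$. Feeding in the uniform degree bound $d$, a bounded right Engel element lands uniformly at a finite level, so $\overline{R}(G)\subseteq\zeta_\omega(G)$, while an arbitrary right Engel element lands in $\zeta_{n\omega}(G)\subseteq\overline{\zeta}(G)$; with the easy inclusions this yields $R(G)=\overline{\zeta}(G)$ and $\overline{R}(G)=\zeta_\omega(G)$. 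I expect the two hardest points to be this final assembly --- controlling the radical kernel while bounding the transfinite height by $n\omega$ --- and the step in the field case that upgrades ``scalar on every composition factor plus right Engel'' to actual membership in a finite term of the upper central series.
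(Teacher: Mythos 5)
The paper itself offers no proof of this statement: it is a survey item, quoted verbatim from Gruenberg's 1966 \emph{J.~Algebra} paper, so your proposal has to stand on its own. Its opening moves are fine (the inclusions $\overline{\zeta}(G)\subseteq\varrho(G)\subseteq R(G)$, $\zeta_\omega(G)\subseteq\overline{R}(G)$, and $R(G)\subseteq HP(G)$ via Heineken plus the companion left-Engel theorem are all legitimately quotable). The fatal problem is your field case, which is false as stated. You claim that for $G\le GL_d(F)$ one gets $R(G)=\overline{R}(G)=\zeta_m(G)$ with $m\le d$ finite. Counterexample in degree $2$ over $\mathbb{C}$: the locally dihedral $2$-group $D=\mathbb{Z}(2^{\infty})\rtimes\langle t\rangle$, with the Pr\"ufer group embedded as the diagonal matrices $\mathrm{diag}(\zeta,\zeta^{-1})$ ($\zeta$ a $2$-power root of unity) and $t$ the coordinate swap, which inverts the Pr\"ufer part. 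This $D$ is an irreducible subgroup of $GL_2(\mathbb{C})$, and it is hypercentral with $\zeta_n(D)$ cyclic of order $2^n$, $\zeta_\omega(D)=\mathbb{Z}(2^{\infty})$, and $\zeta_{\omega+1}(D)=D$. By the inclusions you yourself quote, $D=\overline{\zeta}(D)\subseteq R(D)$, so $R(D)=D$; but $\overline{R}(D)=\zeta_\omega(D)=\mathbb{Z}(2^{\infty})\neq D$, and $t$ lies in no finite term $\zeta_m(D)$ (concretely, $[t,_n\,gt]=g^{(-2)^n}$, which vanishes only once $2^n$ exceeds the order of $g$, so $t$ is right Engel of unbounded length). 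Thus $R\neq\overline{R}$ already for degree-$2$ linear groups over a field, and your field-case conclusion --- which would force $R(G)=\overline{R}(G)$ for all such groups --- contradicts the very theorem being proved, whose entire point is that unbounded right Engel elements reach transfinite levels of the upper central series while bounded ones stop at $\zeta_\omega$.

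The precise step that breaks is the Clifford--Schur argument. Clifford's theorem says the isotypic components of a normal subgroup's action are \emph{permuted} by the ambient irreducible group, not fixed individually; invariance of the decomposition as a whole does not yield a single component, hence does not yield scalarity. A locally nilpotent (even abelian) normal subgroup of an irreducible linear group can act monomially rather than scalarly: in $D$ above, the normal abelian subgroup $\mathbb{Z}(2^{\infty})$ has two isotypic components which $t$ swaps, and the normal closure $\langle t\rangle^{D}=D$ is locally nilpotent, irreducible and certainly not scalar. (Only \emph{unipotent} normal subgroups are forced to act trivially in an irreducible group, via the fixed-point space.) Since everything downstream --- the ``collapse into $\zeta_m$ with $m\le d$'', the uniform bound fed into the residue-field reductions, and the $\mathfrak{m}$-adic reassembly --- rests on this false field case, the gap is architectural and not repairable by patching the final assembly, which is in any case far sketchier than what the Noetherian reduction actually demands.
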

\begin{thm}[Wehrfritz]{\rm \cite[4.4]{Wehrfritz2000}}
Let $G$ be a group of finitary automorphisms of a module  over
a commutative ring  with identity. Then $R(G)=\varrho(G)$ and $\overline{R}(G)=\overline{\varrho}(G)$.
\end{thm}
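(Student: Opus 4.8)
The plan is to establish the two reverse inclusions $R(G)\subseteq\varrho(G)$ and $\overline{R}(G)\subseteq\overline{\varrho}(G)$, since $\varrho(G)\subseteq R(G)$ and $\overline{\varrho}(G)\subseteq\overline{R}(G)$ hold in every group by Gruenberg's proposition proved above. So I fix $a\in R(G)$ and an arbitrary $x\in G$ and aim to build an ascending series running from $\langle x\rangle$ up to $\langle x\rangle\langle a\rangle^G$; in the bounded case I must moreover make this series subnormal with defect bounded by a number depending on $a$ alone.

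First I would read off the local Engel data. As $R(G)$ is conjugation-invariant, every conjugate $a^g$ is again right Engel, so there is $m=m(g)$ with $[a^g,_m x]=1$, and part 2 of Lemma \ref{lem:x^y} then makes $\langle a^g\rangle^{\langle x\rangle}=\langle a^g,(a^g)^x,\dots,(a^g)^{x^{m-1}}\rangle$ finitely generated. Hence the conjugation action of $\langle x\rangle$ on $N:=\langle a\rangle^G$ is restrained, so by the restrained-module proposition above each $x$-invariant abelian section of $N$ is a restrained $\mathbb{Z}\langle x\rangle$-module. Passing to the underlying $R$-module, I would use the finitary hypothesis to filter $N$ by a $G$-invariant series whose factors are abelian $R$-Noetherian $\mathbb{Z}G$-sections (this is the stability-group aspect of a finitary action), so that the maximal condition is available at every section.

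The decisive translation is that $[a^g,_m x]=1$ says precisely that, written additively, $a^g$ is killed by $(x-1)^m$ in the relevant section. Consequently $x-1$ acts locally nilpotently on each abelian section of $N$, and on a section satisfying the maximal condition a locally nilpotent action is nilpotent on every finitely generated piece, yielding a finite $\langle x\rangle$-central chain through that piece. Splicing these chains across the successive sections of $N$ and taking unions at limit stages produces an ascending series of $x$-invariant subgroups in which $\langle x\rangle$ is ascendant in $\langle x\rangle N=\langle x\rangle\langle a\rangle^G$, giving $a\in\varrho(G)$; Heineken's identity (Theorem \ref{t:He}) is the convenient device for turning each nilpotency relation into the normality required at a successor stage. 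In the bounded case there is one $k$ with $[a^g,_k x]=1$ for all $g$ and all $x$, so $(x-1)^k$ annihilates every section and the action is genuinely nilpotent of bounded degree; then each successor step raises the subnormal defect by a bounded amount and $a\in\overline{\varrho}(G)$.

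The hard part will be the bounded case, and within it the uniform defect bound: I must show that the finitary action embeds $\langle x\rangle$ \emph{subnormally}, not merely ascendantly, with defect independent of $x$, which forces me to control how the defect increments accumulate as the series climbs through the (a priori unboundedly many) abelian sections. This is exactly the point where the soft arguments available for arbitrary groups break down and the full weight of Wehrfritz's finitary module machinery — in particular the Noetherian bounds on the stability filtration — is indispensable; the transfinite patching of the local central chains into a single series for $\langle x\rangle$ is the accompanying technical burden.
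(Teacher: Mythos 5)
Your opening reduction is correct: the survey proves Gruenberg's inclusions $\varrho(G)\subseteq R(G)$ and $\overline{\varrho}(G)\subseteq\overline{R}(G)$ for arbitrary groups, so only the reverse inclusions are at issue, and your use of part 2 of Lemma \ref{lem:x^y} to see that $\langle a^g\rangle^{\langle x\rangle}$ is finitely generated for each conjugate $a^g$ of a right Engel element is sound. For calibration, note that the paper itself gives no proof of this theorem at all --- it is quoted verbatim from Wehrfritz \cite{Wehrfritz2000} --- so your outline has to stand on its own; it does not.

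The genuine gap is the sentence in which you ``pass to the underlying $R$-module'' and filter $N=\langle a\rangle^G$ by a $G$-invariant series with abelian $R$-Noetherian factors. The group $N$ is a subgroup of $G\leq FAut_R M$; it is not a submodule of $M$ and carries no $R$-module structure, so the finitary hypothesis (that $M(g-1)$ is $R$-Noetherian for each $g$) says nothing directly about the internal normal structure of $N$. Producing an ascending abelian series in $\langle a\rangle^G$ --- let alone one whose factors satisfy the maximal condition --- is essentially the entire content of the theorem: once such a series exists, the soluble and $SN^*$-group results of Gruenberg and Robinson quoted in the survey finish the unbounded case, and your splicing argument becomes routine. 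Relatedly, your ``decisive translation'' of $[a^g,_m x]=1$ into annihilation by $(x-1)^m$ is valid only inside an abelian section of $N$ written additively; it never interacts with the module $M$, which is the only place the finitary hypothesis lives. What Wehrfritz's machinery actually supplies, and what you would have to prove, is the bridge between the two: that right Engel finitary automorphisms act stably/unipotently on suitable Noetherian pieces of $M$, so that their normal closures stabilize series in $M$ and hence have the group-theoretic structure you assumed. Since you explicitly defer this (``the full weight of Wehrfritz's finitary module machinery \dots is indispensable''), and likewise defer the uniform defect bound needed for $\overline{R}(G)\subseteq\overline{\varrho}(G)$, the proposal is a strategy with its two hardest steps missing rather than a proof.
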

Wehrfritz \cite{Wehrfritz92} has also studied the Engel structure of certain linear groups over skew fields.
\subsubsection{Right Engel Elements in Groups Satisfying Certain Min or Max Conditions}
\begin{thm}[Peng]{\rm \cite{Peng66}}\label{Peng66}
Let $G$ be a group which satisfies the maximal condition on its abelian subgroups. Then $R(G)=\overline{R}(G)=\overline{\zeta}(G)$.
\end{thm}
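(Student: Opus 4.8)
The plan is to collapse the whole statement to the single inclusion $R(G)\subseteq\zeta_\omega(G)$. Indeed, the facts already recorded in this section give $\overline{\zeta}(G)\leq\varrho(G)\subseteq R(G)$ and $\zeta_\omega(G)\leq\overline{\varrho}(G)\subseteq\overline{R}(G)\subseteq R(G)$, while $\zeta_\omega(G)\subseteq\overline{\zeta}(G)$ holds by the definition of the hypercentre. Hence, once $R(G)\subseteq\zeta_\omega(G)$ is established, every inclusion in $\zeta_\omega(G)\subseteq\overline{R}(G)\subseteq R(G)\subseteq\zeta_\omega(G)$ and in $\overline{\zeta}(G)\subseteq R(G)\subseteq\zeta_\omega(G)\subseteq\overline{\zeta}(G)$ becomes an equality, and $R(G)=\overline{R}(G)=\overline{\zeta}(G)$ follows at once; in particular the hypercentre is automatically reached at a finite stage, with no separate argument needed. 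So it remains only to show that a right Engel element lies in some finite term of the upper central series.

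First I would locate all right Engel elements inside a single well-behaved subgroup. By Heineken's theorem (Theorem \ref{t:He}) we have $R(G)^{-1}\subseteq L(G)$, and by Plotkin's theorem (Theorem \ref{t:max-L}) $L(G)=HP(G)=:N$ is nilpotent; since $N$ is a subgroup, this yields $R(G)\subseteq N$. Moreover, the maximal condition on abelian subgroups implies the maximal condition on nilpotent subgroups (as noted in the excerpt), so every subgroup of the nilpotent group $N$ is nilpotent and the subgroups of $N$ satisfy the ascending chain condition; thus $N$ is a finitely generated nilpotent (indeed polycyclic) normal subgroup of $G$. The problem now reads: given $a\in R(G)$ lying in the finitely generated nilpotent normal subgroup $N$ of class $c$, show $a\in\zeta_n(G)$ for some $n$.

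The core of the argument is an induction on $c$ carried out along the characteristic central chain $N=\gamma_1(N)\geq\gamma_2(N)\geq\cdots\geq\gamma_{c+1}(N)=1$, each term being normal in $G$. I would stay inside $G$ and work down this filtration rather than pass to $G/\gamma_c(N)$, since quotients need not inherit the maximal condition on abelian subgroups. The decisive layer is the top one: the finitely generated abelian group $M=\gamma_c(N)$ is a $\mathbb{Z}G$-module on which the right Engel hypothesis reads, for the relevant element $v$, that $v(g-1)^{n(g)}=0$ for every $g\in G$. The task there is to upgrade this \emph{pointwise} nilpotence of the augmentation action to a \emph{uniform} one, i.e.\ $[v,_n g_1,\dots,g_n]=1$ for all $g_i\in G$ with a single $n$, which is exactly the assertion $v\in\zeta_n(G)$. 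Because $M$ is finitely generated as an abelian group, $G$ acts on it through a subgroup of $\mathrm{Aut}(M)$, an essentially integral linear group, and it is the Noetherian nature of $M$ together with the maximal condition that should force such a uniform bound.

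The main obstacle is precisely this last step: passing from pointwise to bounded Engel behaviour on $M$. The pointwise hypothesis is genuinely weaker, and the group ring $\mathbb{Z}G$ is not Noetherian for general $G$, so one cannot argue directly over $\mathbb{Z}G$; instead one must exploit that the action is faithfully recorded on the finitely generated abelian group $M$, hence inside a linear group still satisfying the chain condition. A secondary, more technical point is the bookkeeping required to feed the bound obtained on each section $\gamma_i(N)/\gamma_{i+1}(N)$ back up the chain so as to conclude $a\in\zeta_n(G)$ with finite $n$, while respecting that only $G$ itself, and not its quotients, is known to satisfy the hypothesis.
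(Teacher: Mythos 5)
Your reduction is sound: using Gruenberg's inclusions $\overline{\zeta}(G)\leq\varrho(G)\subseteq R(G)$ and $\zeta_\omega(G)\leq\overline{\varrho}(G)\subseteq\overline{R}(G)$, it does suffice to prove $R(G)\subseteq\zeta_\omega(G)$; and the localization $R(G)\subseteq L(G)=HP(G)=:N$, with $N$ nilpotent and indeed polycyclic (via Heineken, Plotkin's Theorem \ref{t:max-L}, and the fact that max-ab implies max on nilpotent subgroups), is also correct. But the theorem is not proved: the step you yourself call ``the main obstacle'' --- upgrading the pointwise condition $v(g-1)^{n(g)}=0$ on an abelian layer to membership in a finite term of the upper central series --- is precisely the mathematical content of Peng's theorem, and your proposal stops at describing this difficulty rather than overcoming it. (For the record, the paper gives no proof either; it cites Peng, so there is nothing internal to compare against.) There is also a secondary structural flaw: even granting the abelian-layer result for right Engel elements, your plan to ``feed the bound back up the chain'' does not close, because if $aM\in\zeta_m(G/M)$ then the elements $[a,g_1,\dots,g_m]$ lying in $M$ need not be right Engel elements of $G$ ($R(G)$ is not closed under commutation with arbitrary elements), so the layer result cannot be applied to them. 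The induction statement must be strengthened, e.g.\ to: \emph{every finitely generated nilpotent normal subgroup of a group $X$ that is generated by elements of $R(X)$ lies in $\zeta_n(X)$ for some finite $n$.}

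Here is what actually fills the hole, and note that it requires no chain condition on the ambient group --- which dissolves your worry about quotients failing to inherit max-ab. Let $H=\langle a\rangle^G\leq N$; being a subgroup of a polycyclic group it is finitely generated, hence generated by finitely many conjugates $a^{h_1},\dots,a^{h_k}$, all of which lie in $R(G)$. Fix $g\in G$. The subgroup $P=H\langle g\rangle$ is polycyclic, hence satisfies max, and $a^{h_i}\in R(G)\cap P\subseteq R(P)$; by Baer's classical theorem for groups with max (the max-ab-free ancestor of the statement being proved), $R(P)=\overline{\zeta}(P)=\zeta_m(P)$ for some finite $m$, whence $H\leq\zeta_m(P)$ and $[H,_m g]=1$. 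Thus every $g\in G$ acts unipotently on $H$, but with an exponent depending on $g$. The uniform bound now comes from the Kolchin--Mal'cev stability theorem: a group of automorphisms of a finitely generated nilpotent (already of a finitely generated abelian) group, every element of which acts unipotently, stabilizes a finite series. Applied to the image of $G$ in $\mathrm{Aut}(H)$, this yields a finite chain $H=H_0\geq H_1\geq\cdots\geq H_r=1$ with $[H_i,G]\leq H_{i+1}$, i.e.\ $H\leq\zeta_r(G)$, so $a\in\zeta_\omega(G)$ as required. It is exactly this pair of tools --- Baer's max theorem applied to the polycyclic subgroups $H\langle g\rangle$, followed by the unipotence theorem --- that your proposal lacks; your closing appeal to ``a linear group still satisfying the chain condition'' gestures at the second tool but neither names it nor proves it.
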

\begin{co}
Let $G$ be a locally finite group. Then $R(G)$ is a subgroup of $HP(G)$.
\end{co}
\begin{proof}
Let $a,b\in R(G)$ and $x\in G$. Then $H=\langle a,b,x\rangle$ is a finite group and so by Theorem \ref{Peng66}, $ab^{-1}\in R(H)$. Hence $[ab^{-1},_k x]=1$ for some integer $k\geq 0$. Now Corollary \ref{co:lfl} and Theorem \ref{t:He} complete the proof.
\end{proof}
\begin{thm}[Plotkin] {\rm \cite{Plotkin58}}
Let $G$ be a group having an ascending series whose factors satisfy Max locally. Then $R(G)$ is a subgroup of $G$.
\end{thm}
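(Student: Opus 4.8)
The plan is to verify the subgroup criterion for $R(G)$. Since $1\in R(G)$, it suffices to prove that $ab^{-1}\in R(G)$ whenever $a,b\in R(G)$; that is, for each fixed $x\in G$ I must produce an integer $k$ with $[ab^{-1},_k x]=1$. I will use repeatedly that right Engelness restricts to subgroups: if $a\in R(G)$ and $a\in K\le G$, then $a\in R(K)$. The first move is to locate $R(G)$ inside the Hirsch--Plotkin radical. By Heineken's theorem (Theorem \ref{t:He}) we have $R(G)^{-1}\subseteq L(G)$, and by the companion theorem of Plotkin proved above, $L(G)=HP(G)$ for every group of the present class; since $HP(G)$ is a subgroup, taking inverses gives $R(G)\subseteq HP(G)$. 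Thus every right Engel element lies in the locally nilpotent radical $HP(G)$, and this containment is the only place where the ascending-series hypothesis actually enters the argument.

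Now fix $a,b\in R(G)$ and $x\in G$. Because $a\in R(G)$ there is an integer $p$ with $[a,_p x]=1$, so by part 2 of Lemma \ref{lem:x^y} the subgroup $\langle a\rangle^{\langle x\rangle}=\langle a,a^x,\dots,a^{x^{p-1}}\rangle$ is finitely generated; likewise $\langle b\rangle^{\langle x\rangle}$ is finitely generated. Hence $P:=\langle a,b\rangle^{\langle x\rangle}=\langle \langle a\rangle^{\langle x\rangle},\langle b\rangle^{\langle x\rangle}\rangle$ is finitely generated and is visibly $\langle x\rangle$-invariant. Since $a,b\in HP(G)$ and $HP(G)$ is normal in $G$, we have $P\le HP(G)$, and a finitely generated subgroup of a locally nilpotent group is nilpotent; thus $P$ is a finitely generated nilpotent group and therefore polycyclic. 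Setting $Q:=\langle a,b,x\rangle=\langle P,x\rangle$, we see that $P\trianglelefteq Q$ with $Q/P$ cyclic, so $Q$ is polycyclic and in particular satisfies the maximal condition on subgroups.

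It remains to finish inside $Q$. As $Q$ satisfies the maximal condition, it satisfies it on its abelian subgroups, so Peng's theorem (Theorem \ref{Peng66}) applies and $R(Q)=\overline{R}(Q)=\overline{\zeta}(Q)$ is a subgroup of $Q$. By restriction $a,b\in R(Q)$, whence $ab^{-1}\in R(Q)$ and $[ab^{-1},_k x]=1$ for some $k$, as $x\in Q$. Since $x\in G$ was arbitrary, $ab^{-1}\in R(G)$, and therefore $R(G)$ is a subgroup. The only genuinely deep ingredient is the assumed identity $L(G)=HP(G)$; granting it, the crux of the present argument is the observation that a right Engel element lying in $HP(G)$ generates, together with any single element $x$, a polycyclic group, which reduces the problem to the already-settled local case of Peng. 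I expect the one point needing care to be the verification that $\langle a\rangle^{\langle x\rangle}$ and $\langle b\rangle^{\langle x\rangle}$ are finitely generated and hence that $Q$ is polycyclic; everything after that is automatic.
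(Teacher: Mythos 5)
Your proof is correct, but there is nothing in the paper to compare it against: the survey quotes this theorem from Plotkin's 1958 paper without reproducing any proof. What you have built is a legitimate self-contained derivation from other results the survey records, and it mirrors almost exactly the proof the paper does give for the neighbouring corollary on locally finite groups (where $\langle a,b,x\rangle$ is finite and Peng's theorem applies at once); your contribution is to make $\langle a,b,x\rangle$ satisfy Max in the present setting. The chain is sound: Heineken's inclusion $R(G)^{-1}\subseteq L(G)$ (Theorem \ref{t:He}) together with Plotkin's left-Engel theorem $L(G)=HP(G)$ for this class gives $R(G)\subseteq HP(G)$, since $HP(G)$ is inverse-closed; part 2 of Lemma \ref{lem:x^y} then makes $P=\langle a,b\rangle^{\langle x\rangle}=\langle\langle a\rangle^{\langle x\rangle},\langle b\rangle^{\langle x\rangle}\rangle$ a finitely generated, $x$-invariant subgroup of the locally nilpotent radical, hence nilpotent and polycyclic; so $Q=\langle P,x\rangle$ is polycyclic, satisfies Max, and Peng's Theorem \ref{Peng66} yields $ab^{-1}\in R(Q)$, whence $[ab^{-1},_k x]=1$ for some $k=k(x)$, which is all that membership in $R(G)$ requires since $k$ may depend on $x$. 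Each individual step checks out: right Engel-ness does restrict to subgroups, $P$ is normal in $Q$ with cyclic quotient, and a polycyclic-by-cyclic group is polycyclic. Two caveats worth recording: the ``companion theorem of Plotkin'' is only \emph{stated}, not proved, earlier in the survey, so your argument rests on two quoted black boxes (the left-Engel theorem and Peng's theorem) rather than one; and your route is certainly not Plotkin's original one --- it trades his direct work with the ascending series for Peng's 1966 theorem, which is logically independent but deeper, so what you gain is modularity (the right-Engel statement becomes a formal consequence of the left-Engel one plus Heineken plus Peng) at the cost of self-containment.
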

\begin{thm}[Held] {\rm \cite{Held66}}
Let $G$ be a group satisfying minimal condition on its abelian subgroups. Then $\overline{R}(G)=\zeta_\omega(G)$.
\end{thm}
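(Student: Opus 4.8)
The inclusion $\zeta_\omega(G)\subseteq\overline{R}(G)$ is exactly the proposition proved above (valid in any group), so the entire content is the reverse inclusion $\overline{R}(G)\subseteq\zeta_\omega(G)$. The plan is to fix $a\in R_k(G)$ and produce a finite $m$ with $a\in\zeta_m(G)$. First I would pass to the left side: by part~4 of Theorem \ref{t:He} we have $a^{-1}\in\overline{L}(G)$, and since $G$ satisfies the minimal condition on abelian subgroups, Held's Theorem \ref{thm:Held66} gives $\overline{L}(G)=Fitt(G)$; hence $a\in Fitt(G)$, and the normal closure $N:=\langle a\rangle^{G}$ is nilpotent, say of class $c$, and normal in $G$. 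Because the sets $R_k(G)$ are automorphism-invariant, every conjugate $a^{g}$ again lies in $R_k(G)$, so $N$ is generated by right $k$-Engel elements.

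Next I would exploit the minimal condition to control $N$. A nilpotent group with the minimal condition on abelian subgroups is a \v{C}ernikov group, and $N$ inherits this condition from $G$; hence $N$ satisfies the minimal condition on all subgroups, and every section of $N$ --- in particular each lower central factor $N_i:=\gamma_i(N)/\gamma_{i+1}(N)$ --- is a \v{C}ernikov abelian group. I regard each $N_i$ as a $\mathbb{Z}G$-module written additively, so that $[x,g]=x(g-1)$. The degree-one factor $N_1=N/\gamma_2(N)$ is generated by the images of the $a^{g}$; since the operator $(g-1)^{k}$ is additive and annihilates each such generator, $N_1(g-1)^{k}=0$ for every $g\in G$. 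A twisted Leibniz computation in the associated graded Lie ring $\bigoplus_i N_i$, using $[u,v](g-1)=[u(g-1),v]+[u,v(g-1)]+[u(g-1),v(g-1)]$, then propagates this upward, yielding a single exponent $\ell$ (bounded in terms of $c$ and $k$) with $N_i(g-1)^{\ell}=0$ for all $g$ and all $i$.

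The technical heart, and the step where the minimal condition is indispensable, is the following module lemma: if $A$ is a \v{C}ernikov abelian $\mathbb{Z}G$-module with $A(g-1)^{\ell}=0$ for every $g\in G$, then $A\mathfrak{g}^{m}=0$ for some finite $m$, where $\mathfrak{g}$ is the augmentation ideal; equivalently $[A,_{m}G]=1$. Here one splits $A$ using its characteristic (hence $G$-invariant) maximal divisible subgroup $D$, which has finite rank, with $A/D$ finite. On each of these finite-rank pieces the pointwise hypothesis that $g-1$ acts nilpotently of bounded degree forces the whole augmentation ideal to act nilpotently --- an Engel-type linear-algebra statement that holds precisely because the rank is finite. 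The wreath-product examples quoted earlier, where $\overline{R}(G)=G$ but $\overline{\zeta}(G)=1$, show that the conclusion collapses once infinite-rank (infinite elementary abelian) sections are admitted, which is exactly what the minimal condition forbids; I expect this lemma to be the main obstacle.

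Finally I would assemble the pieces. Applying the module lemma to each $N_i$ yields integers $m_i$ with $[\gamma_i(N),_{m_i}G]\le\gamma_{i+1}(N)$, and telescoping down the finite $G$-invariant series $N=\gamma_1(N)\ge\cdots\ge\gamma_{c+1}(N)=1$ gives $[N,_{M}G]=1$ with $M=\sum_i m_i$. Thus $N\le\zeta_{M}(G)$, whence $a\in\zeta_{M}(G)\subseteq\zeta_\omega(G)$. This establishes the reverse inclusion and, combined with the first paragraph, the theorem.
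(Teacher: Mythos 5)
There is nothing in the paper to compare against here: the theorem is quoted from Held's 1966 article without proof, so your argument has to stand on its own --- and in outline it does. The frame is sound: the inclusion $\zeta_\omega(G)\subseteq\overline{R}(G)$ is the earlier unnumbered proposition; part 4 of Theorem \ref{t:He} together with Theorem \ref{thm:Held66} puts $a\in Fitt(G)$, so $N=\langle a\rangle^G$ is nilpotent of some class $c$; $N$ inherits the minimal condition on abelian subgroups and hence is \v{C}ernikov (a soluble group with that condition satisfies the minimal condition outright); your claim that every $g-1$ acts nilpotently of degree $k$ on $N_1$ and of degree bounded in terms of $c$ and $k$ on every lower central factor is correct --- the $G$-equivariant surjection $N_1^{\otimes i}\twoheadrightarrow\gamma_i(N)/\gamma_{i+1}(N)$ with $g$ acting diagonally makes the propagation transparent and gives the bound $c(k-1)+1$; and the final telescoping argument is standard. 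The one real hole is the module lemma you flag as the ``main obstacle'', and you should know that it is true and that the missing tool has a name: Kolchin's theorem. On a primary component $D_p$ of the divisible part, of finite rank $n$, one has $\mathrm{End}(D_p)\cong M_n(\mathbb{Z}_p)$; the image of $G$ in $GL_n(\mathbb{Q}_p)$ consists entirely of unipotent matrices (here it matters that your hypothesis $A(g-1)^{\ell}=0$ holds for \emph{every} $g\in G$, not merely for a generating set), so Kolchin's theorem provides a common flag and $\mathfrak{g}^{n}$ annihilates $D_p$. On the finite quotient $A/D$, a unipotent automorphism of a finite abelian $p$-group is a $p$-element, so the image of $G$ on each primary component is a finite $p$-group acting on a finite $p$-group, hence acts nilpotently; splicing along $0\to D\to A\to A/D\to 0$ finishes the lemma.

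Two smaller remarks. First, make explicit that $N_1$ is generated \emph{as an abelian group} by the images of the conjugates $a^{h}$; this is what allows the additive operator $(g-1)^{k}$, which kills each generator, to kill all of $N_1$. Second, since you invoke Held's companion theorem $\overline{L}(G)=Fitt(G)$, which comes from the same 1966 paper as the statement being proved, what you have produced is a legitimate derivation inside this survey's framework (where Theorem \ref{thm:Held66} is available) rather than a reconstruction of Held's own argument; that is perfectly acceptable for the purpose at hand, but worth acknowledging.
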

\begin{thm}[Martin \& Pamphilon]{\rm \cite[Theorem (iii), (iv)]{MartinPamphilon}}
Let $G$ be a group satisfying minimal condition on those subgroups which can be generated by their left Engel elements. Then $R(G)=\overline{\zeta}(G)$ and $\overline{R}(G)=\zeta_\omega(G)$.
\end{thm}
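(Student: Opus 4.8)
The plan is to split the statement into its two nontrivial containments and attack them with Heineken's lemma, Plotkin's structure theory, and the minimal condition. The reverse containments come for free: $\overline{\zeta}(G)\leq\varrho(G)\subseteq R(G)$ gives $\overline{\zeta}(G)\subseteq R(G)$, and $\zeta_\omega(G)\leq\overline{\varrho}(G)\subseteq\overline{R}(G)$ (equivalently the proposition $\zeta_\omega(G)\subseteq\overline{R}(G)$) gives the other. Since $\overline{R}(G)\subseteq R(G)$, it remains to prove $R(G)\subseteq\overline{\zeta}(G)$ together with the sharper $\overline{R}(G)\subseteq\zeta_\omega(G)$.

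Take $a\in R(G)$. By Heineken's Theorem~\ref{t:He} its inverse, and hence every conjugate $(a^{-1})^g$, lies in $L(G)$; since $a\in\langle a\rangle^G$, the normal closure $N=\langle a\rangle^G$ is generated by left Engel elements, so $N=\langle N\cap L(G)\rangle$ and every subgroup of $N$ of this same form is subject to the minimal condition. The engine of the proof is the assertion that any $H$ with $H=\langle H\cap L(G)\rangle$ is hypercentral. I would argue by contradiction, choosing by the minimal condition a minimal such $H$ that fails to be hypercentral; Plotkin's Theorem~\ref{thm:Plotkin58Lemma1} then furnishes, inside the normalizer of a nilpotent subgroup built as in Theorem~\ref{t:Plotkin58}, a conjugate of a left Engel element, and the resulting smaller subgroups of the same type are intended to collide with the minimality of $H$. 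It is at this point that the minimal condition must be deployed with care. The outcome is that $N$ is a hypercentral normal subgroup of $G$.

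The main obstacle is to promote hypercentrality of $N$ to the conclusion $a\in\overline{\zeta}(G)$, that is, to pass from the hypercenter of $N$ to that of the ambient group $G$. Here I would induct transfinitely up the upper central series of $N$. Each section $\zeta_{\beta+1}(N)/\zeta_\beta(N)$ is abelian and, being characteristic in the normal subgroup $N$, is normal in $G$, so $G$ acts on it and it becomes a $\mathbb{Z}G$-module. The right Engel condition on $a$ says exactly that, reading $[a,_m g]=1$ additively, each operator $g-1$ acts nilpotently on the image of $a$; moreover these sections are generated modulo $\zeta_\beta(N)$ by images of right Engel elements, so they are constrained modules in the sense used before Brookes' Theorem~\ref{thm:Brookes}. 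The crux is to upgrade the pointwise nilpotence of the single operators $g-1$ to annihilation of the image of $a$ by a fixed power of the whole augmentation action, for this is precisely what places that image in a term $\zeta_k$ of the $G$-central series; the minimal condition is what must supply this uniformity, through a descending-chain argument on the constrained module. Stitching the steps together along the hypercentral series of $N$ then yields $a\in\overline{\zeta}(G)$.

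Finally, for the refinement $\overline{R}(G)\subseteq\zeta_\omega(G)$ I would rerun the argument with lengths tracked. If $a\in R_n(G)$ then $a^{-1}\in L_{n+1}(G)$ by Theorem~\ref{t:He}, so the Engel commutators in $g$ terminate after a number of steps bounded uniformly in $g$, and Lemma~\ref{lem:x^y} expresses each $\langle a\rangle^{\langle g\rangle}$ on boundedly many conjugates of $a$. This uniform bound should force each module step above to reach a term $\zeta_k$ with $k$ independent of the transfinite stage, so that the total central height needed is at most $\omega$ and $a\in\zeta_\omega(G)$.
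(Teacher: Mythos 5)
First, a caveat about the ground truth: the paper does not prove this theorem at all; it is quoted as a survey item from \cite{MartinPamphilon}, so your proposal has to be judged on its own merits rather than against an in-paper argument. The easy halves of your plan are fine: $\overline{\zeta}(G)\leq\varrho(G)\subseteq R(G)$ and $\zeta_\omega(G)\leq\overline{\varrho}(G)\subseteq\overline{R}(G)$ are established in the paper, and the Heineken reduction via Theorem \ref{t:He} (all conjugates of $a^{-1}$ lie in $L(G)$, so $N=\langle a\rangle^G=\langle N\cap L(G)\rangle$) is correct. The first genuine gap is your ``engine''. The claim that every $H=\langle H\cap L(G)\rangle$ is hypercentral is, in substance, the companion parts of Martin--Pamphilon's own theorem (that $L(G)=HP(G)$, which together with the minimal condition forces $HP(G)$ to be a hypercentral \v{C}ernikov group); that is, it is the hard content of the result, and your sketch does not prove it. Plotkin's tools are mismatched to the hypothesis: Theorem \ref{t:Plotkin58} builds an \emph{ascending} chain of nilpotent subgroups whose termination is exactly what a maximal-type condition supplies, and a minimal condition does nothing to stop such a chain; Theorem \ref{thm:Plotkin58Lemma1} applies only to \emph{nilpotent} $H$, which your minimal non-hypercentral $H$ has no reason to be; and a contradiction with minimality would require exhibiting a strictly smaller subgroup of the same type that is again non-hypercentral, which you never produce. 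Your own sentence ``it is at this point that the minimal condition must be deployed with care'' marks precisely the place where the proof is missing.

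The promotion step contains a second gap, and one of its ingredients is unjustified. Hypercentrality of $N$ is a statement internal to $N$; to get $a\in\overline{\zeta}(G)$ you need an ascending $G$-central series through $a$. You propose to make each section $\zeta_{\beta+1}(N)/\zeta_\beta(N)$ a restrained $\mathbb{Z}G$-module via the paper's proposition, but that proposition requires the relevant subgroup to satisfy $H=\langle H\cap R(G)\rangle$, and the terms $\zeta_\beta(N)$ of the upper central series are not so generated in general: generation of $N$ by the conjugates of $a$ does not pass to characteristic subgroups or sections. More seriously, the decisive implication --- each $g-1$ acting nilpotently on the image of $a$ forces that image into a single finite term $\zeta_k(G)$ --- is exactly the theorem being proved; pointwise Engel conditions never yield uniform central bounds for free (this is the whole content of Gruenberg's, Brookes' and Held's theorems, each of which needs its extra hypotheses), and you defer it to an unspecified ``descending-chain argument on the constrained module''. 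Finally, even granting a bounded contribution at every stage, stitching along the upper central series of $N$ cannot deliver $\overline{R}(G)\subseteq\zeta_\omega(G)$: that series may have length exceeding $\omega$ (a locally nilpotent \v{C}ernikov group can have hypercentral length $\omega+1$), whereas the assertion $\overline{R}(G)=\zeta_\omega(G)$ means every right $n$-Engel element lies in $\zeta_k(G)$ for some \emph{finite} $k$, a Held-type statement in the spirit of \cite{Held66} that your bookkeeping of Engel lengths does not reach. A more promising route, once $L(G)=HP(G)$ is available: every element of $N\leq HP(G)$ is left Engel, so \emph{every} subgroup of $N$ is generated by its left Engel elements, hence $N$ satisfies the minimal condition on all subgroups and is \v{C}ernikov; one can then argue on minimal $G$-invariant subgroups and the finite residual of $N$, in the manner of Peng's Theorem \ref{Peng66} and Held's theorem, rather than through module theory.
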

As far as we know the following result is still unpublished.
\begin{thm}[Martin]{\rm \cite[p. 56 of Part II]{Robinson72-2}}
Let $G$ be a group satisfying minimal condition on its abelian subgroups. Then $R(G)=\overline{\zeta}(G)$.
\end{thm}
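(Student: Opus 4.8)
The plan is to prove the two inclusions separately; one is essentially free and the other carries all the weight. The inclusion $\overline{\zeta}(G)\subseteq R(G)$ needs no hypothesis at all: it is immediate from the chain $\overline{\zeta}(G)\leq\varrho(G)$ and $\varrho(G)\subseteq R(G)$ recorded earlier (indeed a central element $z$ satisfies $[z,{}_1x]=1$ for every $x$, so trivially $Z(G)\subseteq R(G)$, and the higher terms are covered by $\varrho$). Thus the entire content of the theorem is the reverse inclusion $R(G)\subseteq\overline{\zeta}(G)$, and this is where the minimal condition on abelian subgroups must be used.

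First I would localize the right Engel elements. By Heineken's theorem (Theorem \ref{t:He}(2)) we have $R(G)^{-1}\subseteq L(G)$, and by Martin's result that $L(G)=HP(G)$ for groups satisfying Min on abelian subgroups (the theorem stated immediately above), $L(G)$ is the Hirsch--Plotkin radical; since $HP(G)$ is a subgroup this gives $R(G)\subseteq HP(G)$. Hence every right Engel element $a$ lies in the locally nilpotent radical, and its normal closure $A=\langle a\rangle^G$ is a locally nilpotent normal subgroup. A locally nilpotent group satisfying the minimal condition on abelian subgroups is a Chernikov group (a standard structural fact, the minimal-condition analogue of the remark preceding Theorem \ref{t:max-L}), so $A$ is Chernikov; being locally nilpotent and Chernikov it is hypercentral, and in particular $Z(A)\neq1$ whenever $a\neq1$.

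Next I would reduce everything to a single statement about the centre, which I will call the Core Lemma: \emph{if $H$ satisfies Min on abelian subgroups and $R(H)\neq1$, then $Z(H)\neq1$.} Granting this, set $\bar G=G/\overline{\zeta}(G)$. Images of right Engel elements are again right Engel, and $\overline{\zeta}(\bar G)=1$ forces $Z(\bar G)=1$; so the Core Lemma (in contrapositive form) yields $R(\bar G)=1$, i.e. $R(G)\subseteq\overline{\zeta}(G)$, as wanted. For this I must know that $\bar G$ still satisfies Min on abelian subgroups. This holds: if $\bar B\leq\bar G$ is abelian with full preimage $B\geq\overline{\zeta}(G)$, then $[B,B]\leq\overline{\zeta}(G)$, and since $\{\zeta_\alpha(G)\cap B\}$ is a central series of $B$ terminating at $\overline{\zeta}(G)$, the subgroup $B$ is hypercentral, hence locally nilpotent, hence Chernikov by the fact just quoted; therefore $\bar B\cong B/\overline{\zeta}(G)$ is Chernikov and satisfies Min.

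Finally I would prove the Core Lemma and here I expect the real difficulty. Take $1\neq a\in R(H)$ and the hypercentral Chernikov normal subgroup $A=\langle a\rangle^H$ produced above. Passing to an $H$-invariant finite characteristic layer of $A$ (for instance the socle of the $H$-invariant abelian group $Z(A)$), the action of $H$ factors through a finite quotient, and on such a finite section a right Engel element is hypercentral by Peng's theorem (Theorem \ref{Peng66}, since finite groups satisfy Max on abelian subgroups); the Engel identity then forces each element of $H$ to act unipotently on the layer, and a simultaneous--fixed--point argument of Kolchin/stability-group type should extract a nontrivial $H$-fixed element, giving $Z(H)\neq1$. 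The hard part will be exactly this last transition: the Engel degrees $n(x,a)$ are only pointwise finite and genuinely unbounded over $x\in H$ (Held's theorem $\overline{R}(H)=\zeta_\omega(H)$ shows that the \emph{bounded} right Engel elements reach no further than $\zeta_\omega$, so one cannot simply invoke a uniform degree), and one must convert this pointwise, unbounded data on the \emph{infinite} Chernikov group into a single simultaneous fixed vector. Organizing the descent through the infinitely many finite layers so that the fixed points persist coherently, and reconciling this with the passage to quotients in the transfinite induction, is the delicate point on which the whole argument turns.
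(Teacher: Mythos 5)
First, a note on the comparison you were asked for: the paper offers \emph{no} proof of this statement at all --- it is quoted as an unpublished result of Martin via \cite[p.~56 of Part II]{Robinson72-2} --- so your proposal can only be judged on its own merits. Judged that way, your reduction scaffolding is correct and genuinely useful: the inclusion $\overline{\zeta}(G)\subseteq\varrho(G)\subseteq R(G)$ is indeed free; Heineken (Theorem \ref{t:He}) together with Martin's companion result $L(G)=HP(G)$ does give $R(G)\subseteq HP(G)$; your verification that $\bar G=G/\overline{\zeta}(G)$ again satisfies Min on abelian subgroups is sound (the preimage of an abelian subgroup of $\bar G$ is hypercentral, hence locally nilpotent, hence Chernikov, granting the standard fact you quote); and, granting your Core Lemma, the theorem follows cleanly by applying it to $\bar G$, since $Z(\bar G)=1$ and right Engel elements map onto right Engel elements.

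The genuine gap is the Core Lemma itself, which --- as you say --- carries the entire content of the theorem, and your sketch of it fails at a concrete point. You propose to apply Peng's theorem (Theorem \ref{Peng66}) to the image of $a$ in the finite quotient $H/C_H(S)$, where $S$ is the socle of $Z(A)$ and $A=\langle a\rangle^H$. But $S\leq Z(A)$ and $a\in A$, so $a$ centralizes $S$: its image in $H/C_H(S)$ is trivial, and Peng's theorem applied there says nothing. The unipotence you then invoke (``each element of $H$ acts unipotently on the layer'') does not follow from any Engel identity you actually possess: the hypothesis $a\in R(H)$ controls the commutators $[a,_n h]$, i.e.\ the action of $h$ on $\langle a\rangle^{\langle h\rangle}$, whereas $S$ lies in the full normal closure $\langle a\rangle^{H}$, and the assertion that each $h\in H$ acts unipotently on $S$ amounts to saying that the elements of $\langle a\rangle^{H}$ behave like right Engel elements --- which is essentially the theorem (and the open problem of whether $R$ is a subgroup) in disguise. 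A repair would more plausibly start from the subgroup $\langle a,h\rangle$: by part 2 of Lemma \ref{lem:x^y}, $\langle a\rangle^{\langle h\rangle}$ is finitely generated, hence finite (a finitely generated periodic subgroup of the Chernikov group $A$), so $\langle a,h\rangle$ is finite-by-cyclic and satisfies Max; Peng's theorem then places $a$ in $\zeta_k\big(\langle a,h\rangle\big)$ for some finite $k$, whence $\big[\langle a\rangle^{\langle h\rangle},_k h\big]=1$. That is genuine unipotence, but only on the $h$-dependent finite subgroup $\langle a\rangle^{\langle h\rangle}$, and assembling these into a single nontrivial $H$-fixed point of $S$ is exactly the work that remains; your closing paragraph concedes as much. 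As it stands, the proposal is an honest and correct reduction, not a proof.
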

\begin{qu}
Let $G$ be an $\mathfrak{M}_c$-group.
\begin{enumerate}
\item Is it true that $R(G)=\varrho(G)$?
\item Is it true that $\overline{R}(G)=\overline{\varrho}(G)$?
\end{enumerate}
\end{qu}
\begin{qu}
 Is $R(G)$ a subgroup for groups $G$ of finite rank? Is $R(G)$  a subgroup for groups $G$ with finite abelian subgroup rank?
\end{qu}
\subsubsection{Right $k$-Engel elements}
For any group $G$, $R_1(G)=Z(G)$.
\begin{thm}[Levi \& W.~P. Kappe] {\rm \cite{Levi42}, \cite{Kappe61}}\label{Levi-Kappe}
Let $G$ be a group, $a\in R_2(G)$ and $x,y,z\in G$.
\begin{enumerate}
\item $a\in L_2(G)$ so that $R_2(G)\subseteq L_2(G)$ and $\langle a\rangle^G$ is an abelian group.
\item $\langle a\rangle^G\subseteq R_2(G)$.
\item $[a,x,y]=[a,y,x]^{-1}$.
\item $[a,[x,y]]=[a,x,y]^2$.
\item $[a^2,x,y,z]=[a,x,y,z]^2=1$ so that $a^2\in \zeta_3(G)$.
\item $[a,[x,y],z]=1$.
\end{enumerate}
\end{thm}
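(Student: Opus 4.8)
The plan is to work throughout from the single hypothesis $[a,x,x]=1$ for all $x\in G$, using the standard expansions $[a,xy]=[a,y]\,[a,x]^{y}$ and $[uv,w]=[u,w]^{v}[v,w]$, and to prove the six assertions in the order $1,2,(3\text{ and }5),4,6$, since each later part feeds on the earlier ones. For \emph{Part 1} the cleanest route is one well-chosen substitution: putting $g=xa$ into the Engel relation and using $[a,xa]=[a,a]\,[a,x]^{a}=[a,x]^{a}$, the condition $[a,xa,xa]=1$ says $[a,x]^{a}$ is centralized by $xa$. Since $[a,x,x]=1$ already gives $[a,x]^{x}=[a,x]$, a short conjugation calculation collapses this to $[a,x]^{a}=[a,x]$, i.e. $[\,[a,x],a\,]=1$. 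As $a^{x}=a\,[a,x]$, this yields $[a,a^{x}]=[a,[a,x]]=1$, so $a$ commutes with all of its conjugates and $\langle a\rangle^{G}$ is abelian; Proposition \ref{prop:l2} then gives $a\in L_2(G)$.

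For \emph{Part 2} I would write $N=\langle a\rangle^{G}$ additively as a left $\mathbb ZG$-module under conjugation, so that $[a,g]=a(g-1)$ and the hypothesis reads $a(g-1)^{2}=0$ for all $g\in G$. The set $\{\,b\in N:\ b(g-1)^{2}=0\ \text{for all}\ g\,\}$ is a subgroup of the abelian group $N$, it is $G$-invariant, and it contains every conjugate $a^{x}$ (these are right $2$-Engel by automorphism-invariance of $R_2(G)$); since such conjugates generate $N$, we get $N\subseteq R_2(G)$. In particular $b(g-1)^{2}=0$ for \emph{every} $b\in N$, which gives the \emph{kill-squares principle}: $b\cdot w=0$ whenever $w$ is a product of factors $(g_i-1)$ containing two equal adjacent factors.

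\emph{Parts 3 and 5} are the crux. Expanding $a(xy-1)^{2}=0$ and discarding, by the kill-squares principle, every term with a consecutive $(x-1)^{2}$ or $(y-1)^{2}$ leaves, modulo commutators of weight $\ge 4$, the antisymmetry $[a,x,y][a,y,x]=1$; that is, Part 3 holds modulo $NI^{3}$. Feeding a three-variable square $a\big(x(yz)-1\big)^{2}=0$ through the same reduction and then permuting the degree-$3$ terms by the weight-$4$ antisymmetry just obtained (applied to the various elements $a,[a,x],[a,z]\in N$) produces $[a,x,y,z]^{2}\equiv 1$ modulo $NI^{4}$. These two relations are then closed up simultaneously by downward induction along the filtration $N\supseteq NI\supseteq NI^{2}\supseteq\cdots$: once the weight-$(k+1)$ terms are known to be antisymmetric and $2$-torsion, the weight-$k$ master relation becomes exact, giving $[a,x,y]=[a,y,x]^{-1}$ and $[a,x,y,z]^{2}=1$ (the equality $[a^{2},x,y,z]=[a,x,y,z]^{2}$ being automatic since $a^{2}\mapsto 2a$ in $N$). \textbf{This induction is the main obstacle:} the map $g\mapsto[a,g]$ is \emph{not} additive over $G$, so every identity first appears only modulo higher-weight error commutators, and the real work is to show those errors are central and of order $2$ so that the filtration terminates the bootstrap.

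Finally, \emph{Parts 4 and 6} drop out by $\mathbb ZG$-bookkeeping. From $[x,y]-1=x^{-1}y^{-1}(xy-yx)$ and $xy-yx=(x-1)(y-1)-(y-1)(x-1)$, Part 3 converts the difference into $2(x-1)(y-1)$, while Part 5 lets one discard the conjugating factor $x^{-1}y^{-1}$ (it alters the degree-$3$ part only by an element of $2NI^{3}=0$); hence $[a,[x,y]]=[a,x,y]^{2}$, which is Part 4. Then $[a,[x,y],z]=a([x,y]-1)(z-1)=[a,x,y,z]^{2}$ by Part 4, and this is trivial by Part 5, giving Part 6.
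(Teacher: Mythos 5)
The paper never proves this theorem --- it is quoted from Levi \cite{Levi42} and W.~P.~Kappe \cite{Kappe61} and only used later (to show $R_2(G)$ is a subgroup) --- so your argument must stand on its own. Your Parts 1 and 2 do stand: the substitution $g=xa$ really collapses as you claim (from $[a,x]^{axa}=[a,x]^a$ one gets $[a,x]^{ax}=[a,x]$, and then $[a,x]^x=[a,x]$ forces $[a,x]^a=[a,x]$), and the observation that, once $N=\langle a\rangle^G$ is abelian, the right $2$-Engel condition $b(g-1)^2=0$ is \emph{additive} in $b\in N$ and holds on the generators $a^x$ is a correct and clean proof of Part 2, together with the kill-squares principle.

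The genuine gap is exactly where you flag it: Parts 3 and 5. A ``downward induction along the filtration $N\supseteq NI\supseteq NI^2\supseteq\cdots$'' cannot be run: nothing makes this filtration reach $0$, and nothing forces $\bigcap_k NI^k=0$, so knowing each identity only modulo the next term never yields the identity itself. The repair, however, stays entirely inside your framework and is finite --- no bootstrap is needed. Put $u=x-1$, $v=y-1$, $w=z-1$. Your master relation, i.e.\ kill-squares applied to $b(xy-1)^2=0$ (valid for every $b\in N$ by your Part 2), reads
\[
b(uv+vu+vuv+uvu+uvuv)=0 .
\]
Now substitute $b\mapsto bu=[b,x]\in N$: kill-squares destroys all but two terms, leaving $buvu+buvuv=0$. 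Right-multiply this relation by $v$: it becomes $buvuv+(buvu)(y-1)^2=0$, and the second summand vanishes because $buvu\in N$; hence $buvuv=0$, and then $buvu=0$ as well. Swapping $x$ and $y$ gives $bvuv=bvuvu=0$. Feeding these four \emph{exact} vanishings back into the master relation leaves $buv+bvu=0$ on the nose: Part 3 holds exactly, not merely modulo $NI^3$. Part 5 then follows from two uses of this exact antisymmetry: on one hand $buvw=-bvuw=bvwu$ (antisymmetry in the first two factors, then antisymmetry applied to $bv\in N$ in the last two); on the other hand antisymmetry for the pair $(x,yz)$, namely $bu(v+w+vw)=-b(v+w+vw)u$, reduces after cancelling the weight-two terms to $buvw=-bvwu$. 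Hence $2buvw=0$ for all $b\in N$, which gives $[a,x,y,z]^2=1$, while $[a^2,x,y,z]=[a,x,y,z]^2$ is linearity as you say; thus $a^2\in\zeta_3(G)$. With Part 3 exact and $2NI^3=0$ established for all of $N$, your derivations of Parts 4 and 6 go through verbatim, since the error you discard when removing the conjugating factor $x^{-1}y^{-1}$ lies precisely in $2NI^3$.
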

W.~P. Kappe  proved explicitly in \cite{Kappe61} that $R_2(G)$ is a characteristic subgroup for any group $G$.
\begin{thm}[W.~P. Kappe] {\rm \cite{Kappe61}}
Let $G$ be a group. Then $R_2(G)$ is a characteristic subgroup of $G$.
\end{thm}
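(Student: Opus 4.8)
The plan is to reduce the statement to showing that $R_2(G)$ is a \emph{subgroup}, because the introduction already records that every Engel subset of $G$, and $R_2(G)$ in particular, is invariant under all automorphisms of $G$; an automorphism-invariant subgroup is by definition characteristic. The identity element clearly lies in $R_2(G)$, and closure under inversion is immediate from Theorem \ref{Levi-Kappe}: if $a\in R_2(G)$ then part 2 gives $\langle a\rangle^G\subseteq R_2(G)$, and since $a^{-1}\in\langle a\rangle\subseteq\langle a\rangle^G$ we obtain $a^{-1}\in R_2(G)$ at once. So the entire content is to prove that $ab\in R_2(G)$ whenever $a,b\in R_2(G)$, i.e.\ that $[[ab,g],g]=1$ for every $g\in G$.

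First I would expand $[[ab,g],g]$ by applying the standard identity $[xy,z]=[x,z]^y[y,z]$ twice. Writing $c=[a,g]$, this yields
\begin{align*}
[ab,g]&=[a,g]^b\,[b,g],\\
[[ab,g],g]&=\big([c^b,g]\big)^{[b,g]}\,[[b,g],g].
\end{align*}
Here $[[b,g],g]=1$ because $b\in R_2(G)$, so $[[ab,g],g]=\big([c^b,g]\big)^{[b,g]}$, and it suffices to prove $[c^b,g]=1$. Using $c^b=c[c,b]$ and the same commutator identity,
$$[c^b,g]=[c,g]^{[c,b]}\,[[c,b],g];$$
the first factor is trivial since $[c,g]=[[a,g],g]=1$, so everything reduces to the vanishing of the cross term $[[c,b],g]=[[[a,g],b],g]$.

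The crux, and the step I expect to be the main obstacle, is precisely this vanishing of $[[[a,g],b],g]$, since it is the only place where the right $2$-Engel hypotheses on $a$ and $b$ must be combined. I would route it through the observation that $d:=[a,g]=a^{-1}a^{g}$ lies in $\langle a\rangle^G$, whence $d\in R_2(G)$ by part 2 of Theorem \ref{Levi-Kappe}. The antisymmetry in part 3, applied to the right $2$-Engel element $d$, then gives $[[d,b],g]=[[d,g],b]^{-1}$; and $[[d,g],b]=[[[a,g],g],b]=[1,b]=1$ since $a\in R_2(G)$. Hence $[[d,b],g]=1$, which forces $[c^b,g]=1$ and therefore $[[ab,g],g]=1$, proving $ab\in R_2(G)$. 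The argument is in the end commutator bookkeeping resting on two facts from Theorem \ref{Levi-Kappe}---that $R_2$ absorbs the normal closure (part 2) and the antisymmetry $[[d,x],y]=[[d,y],x]^{-1}$ (part 3)---so the delicate point is simply to recognise that the intermediate commutator $d=[a,g]$ is itself a right $2$-Engel element, after which the antisymmetry collapses the cross term.
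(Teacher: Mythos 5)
Your proof is correct, but it takes a genuinely different route from the paper's. The paper verifies the subgroup criterion in a single stroke: for $a,b\in R_2(G)$ and $x\in G$ it computes
$$[ab^{-1},_2 x]=[[a,x]^{b^{-1}}[b,x]^{-b^{-1}},x]=[[a,x][b,x]^{-1},x[x,b]]^{b^{-1}}=1,$$
quoting parts 3 and 4 of Theorem \ref{Levi-Kappe} for the last equality; inverse- and product-closure are thus handled simultaneously via the $ab^{-1}$ test, at the cost of a terse final step whose expansion (commutators against the commutator $[x,b]$) is exactly where part 4, $[a,[x,y]]=[a,x,y]^2$, is needed. You instead split the verification: inverse-closure falls out of part 2 (since $a^{-1}\in\langle a\rangle\subseteq\langle a\rangle^G\subseteq R_2(G)$), and for product-closure your pivot is the observation that $d=[a,g]=a^{-1}a^g$ lies in $\langle a\rangle^G$ and hence is itself right $2$-Engel, so the antisymmetry of part 3 applied to $d$ collapses the only surviving cross term: $[[[a,g],b],g]=[[[a,g],g],b]^{-1}=1$. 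So you lean on parts 2 and 3 where the paper leans on parts 3 and 4. What your version buys is transparency---every step is an application of $[xy,z]=[x,z]^y[y,z]$ or an explicit hypothesis, and it makes visible the structural fact that $R_2(G)$ absorbs normal closures; what the paper's version buys is brevity, handling $ab^{-1}$ in one computation without invoking the normal-closure property at all. Both are commutator bookkeeping over the same theorem, but the decompositions, and the parts of Theorem \ref{Levi-Kappe} they rest on, are different.
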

\begin{proof}
As $R_2(G)$ is invariant under automorphisms of $G$, it is  enough to show that $R_2(G)$ is a subgroup. Let $a,b\in R_2(G)$ and $x\in G$. Then
\begin{align*}
[ab^{-1},_2 x]&=[[a,x]^{b^{-1}}[b,x]^{-b^{-1}},x]\\
&=[[a,x][b,x]^{-1},x[x,b]]^{b^{-1}}\\
&=1
\end{align*}
by parts 3 and 4 of Theorem \ref{Levi-Kappe}. Hence $ab^{-1}\in R_2(G)$.
\end{proof}
\begin{thm}[Newell] {\rm \cite{Newell96}}\label{t:Newell96}
Let $G$ be any group and $x\in R_3(G)$. Then $\langle x\rangle^G$ is a nilpotent group of class at most $3$.
\end{thm}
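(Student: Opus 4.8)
The aim is to prove that $N:=\langle x\rangle^{G}$ satisfies $\gamma_{4}(N)=1$. \textbf{Reduction.} The normal closure $N$ is generated by the set $S=\{x^{g}\mid g\in G\}$, and for a group generated by a set $S$ the term $\gamma_{4}$ is generated, as a normal subgroup, by the left-normed commutators $[s_{1},s_{2},s_{3},s_{4}]$ with $s_{i}\in S$ (longer commutators being iterated brackets of these, hence already trivial once these are). Thus $\gamma_{4}(N)=1$ is equivalent to
$$[x^{g_{1}},x^{g_{2}},x^{g_{3}},x^{g_{4}}]=1\qquad\text{for all }g_{1},g_{2},g_{3},g_{4}\in G .$$
Since $R_{3}(G)$ is invariant under automorphisms, each conjugate $x^{g_{i}}$ is itself a right $3$-Engel element, so the whole problem reduces to one commutator identity among four mutually conjugate right $3$-Engel elements.

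\textbf{Polarizing the Engel law.} Let $a$ denote a generic right $3$-Engel element, so $[a,g,g,g]=1$ for every $g\in G$. The plan is to polarize this law just as the Levi--Kappe identities of Theorem \ref{Levi-Kappe} polarize the right $2$-Engel law. Substituting $g=g_{1}g_{2}$ and $g=g_{1}g_{2}g_{3}$ and expanding repeatedly by $[a,uv]=[a,v][a,u][a,u,v]$, I would read off, layer by layer, antisymmetry relations for $[a,g,h]$ and $[a,g,h,k]$ together with the vanishing of the fully symmetrized commutator
$$\prod_{\sigma\in S_{3}}[a,g_{\sigma(1)},g_{\sigma(2)},g_{\sigma(3)}]=1 ,$$
initially valid only modulo commutators of higher weight. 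The first real task is to upgrade these congruences to genuine identities in $N$, which should be feasible because the bottom layers are governed by the exact $R_{2}$-theory recorded in Theorem \ref{Levi-Kappe}.

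\textbf{Commutator calculus.} With these identities in hand I would attack $[a,b,c,d]$ for $a,b,c,d\in S$ by a collection process: rewrite the commutator as a product of the symmetrized expressions above, each of which vanishes by the polarized law, while the correction terms are pushed into $\gamma_{5}(N)$ and removed by an induction on the nilpotency class of finitely generated subgroups of $N$. Heineken's relation $[x,_{n+1}g]=[g^{-x},_{n}g]^{g}$ of Theorem \ref{t:He} furnishes the bridge, when needed, between the right-normed data supplied by the hypothesis and the left-normed commutators being manipulated.

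\textbf{The main obstacle.} The difficulty lies in the execution rather than the shape of the argument, for two reasons. The Engel hypothesis is \emph{global}: $[a,w,w,w]=1$ holds for every $w\in G$ and not merely for $w\in\langle a,b,c,d\rangle$, so the substitutions $w$ entering the polarization must be chosen with the target commutator in view, and one cannot simply localize to a four-generated subgroup. More seriously, every identity must be made \emph{exact} rather than valid only modulo higher-weight terms, and tracking these correction terms through the collection is the genuine combinatorial core of the proof. Once $\gamma_{4}(N)=1$ is secured, a single finitely presented witness, checkable by a nilpotent-quotient computation, shows that the class bound $3$ cannot be lowered.
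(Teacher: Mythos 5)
Your proposal is an outline of a strategy, not a proof: the two steps you yourself flag --- upgrading the polarized congruences to exact identities, and tracking the correction terms through the collection --- are precisely the hard content of Newell's theorem, and they are nowhere carried out. More importantly, the mechanism you offer for handling them is circular. You propose to push the error terms into $\gamma_5(N)$ and then remove them ``by an induction on the nilpotency class of finitely generated subgroups of $N$''; but nothing in your argument shows that any relevant subgroup of $N$ is nilpotent, and that is exactly the crux. In a group not already known to be nilpotent, an identity proved modulo higher-weight commutators yields nothing: the lower central series can stabilize at a nontrivial term (nothing you have said rules out $\gamma_4(N)=\gamma_5(N)\neq 1$), so the induction has no base and the congruences can never be bootstrapped into exact identities. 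The analogy with the $2$-Engel case is misleading on just this point: the Levi--Kappe identities of Theorem \ref{Levi-Kappe} are exact in an arbitrary group because the right $2$-Engel law linearizes completely, whereas the right $3$-Engel law does not --- genuine correction terms appear --- which is why some nilpotency statement must be secured first, by means other than formal polarization.

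This is also where the paper locates the heart of the matter: immediately after Theorem \ref{t:Newell96} it records that an essential ingredient of Newell's proof is showing that $\langle a,b,x\rangle$ is nilpotent for all $a,b\in R_3(G)$ and all $x\in G$ --- note that the third generator is an arbitrary element of $G$, which reflects your (correct) observation that the hypothesis is global and that substitutions from outside the target subgroup must be exploited. Establishing this nilpotency is real work of a different kind (for instance, finite generation of the closures $\langle a\rangle^{\langle y\rangle}$ as in Lemma \ref{lem:x^y}, combined with structural arguments), and it is what licenses any subsequent ``work modulo $\gamma_5$ and induct on the class'' computation. Until you supply such an ingredient, the collection process in your third step has no environment in which it is valid, and the proof does not go through; the final remark about a nilpotent-quotient witness for sharpness of the bound is fine but peripheral.
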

An essential ingredient to proving Theorem \ref{t:Newell96} was to show $\langle a,b,x\rangle$ is nilpotent for all $a,b\in R_3(G)$ and  $x\in G$. \\
The following asks of a similar property mentioned in Theorem \ref{Levi-Kappe} of right $2$-Engel elements for right $3$-Engel ones.
\begin{qu}
Let $G$ be an arbitrary group and $a\in R_3(G)$. Are there positive integers $n$ and $m$ such that  $a^{m}\in\zeta_n(G)$?
\end{qu}
\begin{thm}[Macdonald]{\rm \cite{Macdonald}}\label{Mac}
There is a finite $2$-group $G$ containing an element $a\in R_3(G)$ such that $a^{-1}\not\in R_3(G)$ and $a^2\not\in R_3(G)$.
\end{thm}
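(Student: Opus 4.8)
The plan is to prove this purely by an explicit construction, since the statement is an existence assertion. In contrast to the right $2$-Engel case, where Theorem~\ref{Levi-Kappe} shows $R_2(G)$ is always a well-behaved subgroup closed under inverses and powers, the pathology for $R_3$ can only surface in a group of sufficiently large nilpotency class; so I would look for a finite $2$-group $G$ of class $4$ or $5$, present it by a power-commutator (polycyclic) presentation on a small generating set $G=\langle a,b,\dots\rangle$, designate the candidate element $a$, and then verify the three separate assertions $a\in R_3(G)$, $a^{-1}\notin R_3(G)$ and $a^2\notin R_3(G)$ by direct commutator calculus. The reference \cite{Macdonald} supplies such a group; the content of the proof is the verification.

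To establish $a\in R_3(G)$ I must show $[a,_3 x]=1$ for every $x\in G$, not merely for the generators. The key reduction is that $x\mapsto[a,_3 x]$ is a word of degree $3$ in the single variable $x$, so in the nilpotent group $G$ its vanishing on all of $G$ is equivalent, after full polarization, to the vanishing of the associated symmetric multilinear brackets $[a,x_i,x_j,x_k]$ together with the lower-degree polarized terms, all evaluated on the finitely many generators. The cross terms produced by $[a,xy]=[a,y]\,[a,x]\,[a,x,y]$ and its iterates are controlled because they lie in higher terms of the lower central series. Thus the universal condition $a\in R_3(G)$ reduces to a finite check, which is carried out by collection in the polycyclic presentation using the commutator expansions recorded in Lemma~\ref{lem:x^y}.

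For the negative assertions I would exhibit a single witness $x$ (most efficiently one of the defining generators) and compute $[a^{-1},_3 x]$ and $[a^2,_3 x]$ directly, using the identities $[a^{-1},x]=\big([a,x]^{-1}\big)^{a^{-1}}$ and $[a^2,x]=[a,x]^{a}\,[a,x]$ to re-express everything in terms of brackets of $a$. Iterating the Engel word three times and collecting, the extra conjugating and squaring factors contribute weight-$3$ and weight-$4$ commutators that fail to cancel, so the calculation terminates in a nontrivial element at the top of the lower central series. This is precisely the asymmetry that is invisible for $a$ itself but persists for $a^{-1}$ and $a^2$.

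The main obstacle is the construction step rather than the verification: one must pin down a finite $2$-group whose class and relations are tuned so that the cancellations making $a$ right $3$-Engel occur, while the analogous cancellations for $a^{-1}$ and $a^2$ fail. Guessing such a presentation by hand is delicate; in practice I would generate candidates with a nilpotent-quotient routine such as the NQ package \cite{Nickel98} used elsewhere in this survey, and then confirm the final group by the hand computation above. Once the group is fixed, the remaining danger is purely bookkeeping, namely ensuring that the polarization argument correctly accounts for the non-central weight-$2$ and weight-$3$ terms, so that checking the generators really does certify $a\in R_3(G)$.
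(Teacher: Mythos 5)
Your proposal is not a proof: the statement is a pure existence claim, so its entire mathematical content is an explicit group, and that is precisely what you never produce. You describe the kind of group you would look for (a finite $2$-group of class $4$ or $5$ with a power-commutator presentation), you list the identities you would use to verify $a\in R_3(G)$, $a^{-1}\notin R_3(G)$ and $a^2\notin R_3(G)$, and you then concede that pinning down the presentation is ``delicate'' and would in practice be delegated either to the reference \cite{Macdonald} or to an unperformed search with the NQ package \cite{Nickel98}. Deferring the construction to \cite{Macdonald} is circular as a proof of Macdonald's theorem, and deferring it to a computation you have not carried out leaves the existence assertion unestablished. (For the record, the survey itself offers no proof of Theorem \ref{Mac} either: it states the result with the citation, exactly because Macdonald's explicit example \emph{is} the proof. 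So there is no argument in the paper for your sketch to be measured against; but as a blind attempt, yours fails to discharge the one obligation that matters.)

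Two further points on the verification scheme you outline. First, your reduction of the universal condition $a\in R_3(G)$ to a check on generators by ``full polarization'' is not sound as stated: the map $x\mapsto [a,_3 x]$ is not multilinear, and passing between an Engel-type identity and its linearization involves error terms that, in a group of exponent a power of $2$, cannot be controlled by the usual device of dividing by factorials; vanishing of the symmetrized brackets on generators does not in general certify $[a,_3 x]=1$ for all $x$. This machinery is also unnecessary: $G$ is finite, so one simply checks every $x\in G$. Second, your identities $[a^{-1},x]=\big([a,x]^{-1}\big)^{a^{-1}}$ and $[a^2,x]=[a,x]^{a}[a,x]$ are correct, so the two negative assertions would indeed follow from a finite computation once a concrete group and a concrete witness $x$ are in hand. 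The gap is solely, but fatally, the absence of that group.
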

On the positive side, we have the following results.
\begin{thm}[Heineken]{\rm \cite{Heineken61}}
If $A$ is the subset of a group $G$ consisting of all elements $a$
such that both $a$ and $a^{-1}$ belongs to $R_3(G)$, then $A$ is a subgroup if either
$G$ has no element of order $2$ or $A$ consists only of elements
having finite odd order.
\end{thm}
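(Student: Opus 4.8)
The plan is to reduce the theorem to a single multiplicative closure statement and then to a torsion argument whose analytic heart is a commutator computation. First, observe that $A$ is symmetric by definition ($a\in A$ forces $a^{-1}\in A$) and contains $1$, so it suffices to prove that $A$ is closed under products. In fact it is enough to establish the implication
\[
u,v\in A \ \Longrightarrow\ uv\in R_3(G),
\]
since applying it to the pair $(b^{-1},a^{-1})\in A\times A$ yields $b^{-1}a^{-1}=(ab)^{-1}\in R_3(G)$, and together with $ab\in R_3(G)$ this gives $ab\in A$. Throughout I would use the full strength of membership in $A$, namely that all four elements $u,u^{-1},v,v^{-1}$ lie in $R_3(G)$.

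For the structural skeleton I would invoke Theorem \ref{t:Newell96}: $\langle u\rangle^G$ and $\langle v\rangle^G$ are nilpotent of class at most $3$ and normal in $G$. By Fitting's theorem their product $M=\langle u\rangle^G\langle v\rangle^G$ is a normal nilpotent subgroup of $G$ (of class at most $6$), and $uv\in M$. Since $[uv,g]=(uv)^{-1}(uv)^g\in M$ and $M$ is normal, every Engel commutator $w:=[uv,_3 g]$ lies in $M$. The two hypotheses are exactly what is needed to kill a possible $2$-torsion obstruction: if $G$ has no element of order $2$ this is immediate, while if $A$ consists of elements of finite odd order then $M$ is a nilpotent group generated by the odd-order conjugates of $u$ and $v$, hence a $2'$-group with no involutions. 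In either case, once I show that $w^2=1$ I may conclude $w=1$, that is, $uv\in R_3(G)$.

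Thus the whole problem is concentrated in proving $[uv,_3 g]^2=1$ for every $g\in G$. Here I would work inside the nilpotent group $\langle u,v,g\rangle$, which is nilpotent by the ingredient established in the proof of Theorem \ref{t:Newell96}, where unrestricted commutator collection is available. Expanding $[uv,g]=[u,g]^{v}[v,g]$ and iterating, one collects $[uv,_3 g]$ into a product of the pure Engel commutators $[u,_3 g]$ and $[v,_3 g]$ --- which vanish since $u,v\in R_3(G)$ --- together with genuinely mixed terms such as $[u,g,g,v]$, $[u,g,v,g]$ and their relatives. To control these I would first derive the $R_3$-analogues of the Levi--Kappe identities of Theorem \ref{Levi-Kappe}: linearizing the defining law $[x,_3 h]=1$ (and its companion for $x^{-1}$) in the variable $h$ should show that, for $x\in\{u,v\}$, the trilinear commutator $[x,\,\cdot\,,\,\cdot\,,\,\cdot\,]$ satisfies symmetry relations modulo squares of commutators, and that the correction terms produced by collection are themselves squares. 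The combination of these symmetries should make the mixed terms cancel in pairs up to a residue that is manifestly a square, forcing $[uv,_3 g]^2=1$.

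I expect this last step --- the explicit linearization and the bookkeeping showing the obstruction squares to the identity --- to be the main obstacle, and it is precisely the point at which membership of all of $u,u^{-1},v,v^{-1}$ in $R_3(G)$ is indispensable: the inverse elements supply the companion identities without which the mixed commutators cannot be paired off. The torsion hypotheses then play only the final, soft role of converting ``$w$ squares to the identity'' into ``$w=1$''.
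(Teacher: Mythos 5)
First, a point of comparison: the paper itself gives no proof of this theorem --- it is a survey item cited directly from Heineken's 1961 paper --- so your attempt can only be judged on its own merits, and on those merits it is an outline with the decisive step missing. Your skeleton is sound: the reduction of the subgroup property to the single implication $u,v\in A\Rightarrow uv\in R_3(G)$ is correct (the pair $(b^{-1},a^{-1})$ trick works because $A$ is inverse-closed by definition and conjugation-invariant because $R_3(G)$ is); the use of Theorem \ref{t:Newell96} and Fitting's theorem to place $[uv,_3 g]$ inside a normal nilpotent subgroup $M$ generated by conjugates of $u$ and $v$ is legitimate (if anachronistic relative to 1961); and your analysis of how the two alternative hypotheses kill $2$-torsion --- directly in $G$ in the first case, via $M$ being a nilpotent torsion $2'$-group in the second --- is correct.

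The genuine gap is that the entire mathematical content of Heineken's theorem is concentrated in the claim you do not prove, namely that $[uv,_3 g]^2=1$ (or at least that $[uv,_3 g]$ is a $2$-element). You describe a programme --- expand $[uv,g]=[u,g]^v[v,g]$, collect, derive ``$R_3$-analogues of the Levi--Kappe identities'' by linearizing $[x,_3 h]=1$ and its companion for $x^{-1}$, and hope the mixed commutators ``cancel in pairs up to a residue that is manifestly a square'' --- but every operative verb here is ``should'' or ``expect''. Nothing guarantees that the collection process in a class-$6$ nilpotent group, which produces a large number of nested mixed commutators such as $[u,g,g,v]$, $[u,g,v,g]$, $[u,g,[v,g]]$ and deeper terms, organizes itself into such pairs; establishing the needed linearized identities for right $3$-Engel elements and carrying out this bookkeeping is precisely what occupies the bulk of Heineken's long computation, and it is not reproducible from the symmetry heuristics you cite (the identities of Theorem \ref{Levi-Kappe} are for right $2$-Engel elements and do not transfer formally). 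Indeed it is not even clear a priori that the obstruction squares to the identity rather than merely having $2$-power order --- the latter would still suffice for your endgame, but it too requires proof. As it stands, your proposal is a correct reduction of the theorem to its hard core, not a proof of the theorem.
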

\begin{thm}[Abdollahi \& Khosravi]{\rm \cite{AbdollahiKhosravi2}}
Let $G$ be a group such that $\gamma_5(G)$ has no element of order $2$. Then $R_3(G)$ is a subgroup of $G$.
\end{thm}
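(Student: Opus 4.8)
The plan is to verify the one-step subgroup criterion: since $1\in R_3(G)$ and the set is closed under conjugation, it suffices to prove that $ab^{-1}\in R_3(G)$ whenever $a,b\in R_3(G)$, i.e.\ that $[ab^{-1},_3 g]=1$ for every $g\in G$. Fix such $a,b$ and $g$. Every commutator appearing in $[ab^{-1},_3 g]$ lies in $H=\langle a,b,g\rangle$, so I would work entirely inside $H$. The decisive structural input is the ingredient behind Newell's Theorem~\ref{t:Newell96}, namely that $\langle a,b,x\rangle$ is nilpotent for all $a,b\in R_3(G)$ and $x\in G$; thus $H$ is a finitely generated nilpotent group. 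Note that $a,b$ are still right $3$-Engel in $H$, and that $\gamma_5(H)\leq\gamma_5(G)$, so $\gamma_5(H)$ inherits the hypothesis of having no element of order $2$. The problem is now entirely about the nilpotent group $H$.

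Write $e=[ab^{-1},_3 g]$. The first claim is that $e\in\gamma_5(H)$. This follows from the fact that, modulo $\gamma_5$, the map $x\mapsto[x,_3 g]$ is a homomorphism: iterating $[xy,g]=[x,g]^{y}[y,g]$ and discarding the conjugation corrections, which raise the commutator weight, gives $[xy,_3 g]\equiv[x,_3 g]\,[y,_3 g]\pmod{\gamma_5(H)}$ and hence $[x^{-1},_3 g]\equiv[x,_3 g]^{-1}$. Applying this to $ab^{-1}$ and using $[a,_3 g]=[b,_3 g]=1$ yields $e\equiv 1\pmod{\gamma_5(H)}$, i.e.\ $e\in\gamma_5(H)$. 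This part uses only the defining relations of $a,b$ and no $2$-torsion hypothesis.

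The crux is to control $e$ within $\gamma_5(H)$, and this is the step I expect to be the main obstacle. Here I would develop the right $3$-Engel analogues of the Levi--Kappe identities of Theorem~\ref{Levi-Kappe}: antisymmetry relations among the commutators of weight $\geq 4$ in $a$ (resp.\ $b$) and $g$, together with squaring relations of the shape $[a,[x,y]]=[a,x,y]^{2}$ that are forced by $[a,_3 g]=1$. Expanding $[ab^{-1},_3 g]$ by the collection formulae and reducing with these identities, while using the nilpotency of $H$ to guarantee that the expansion is a finite product and that commutators of large weight vanish, should show that every weight-$5$ cross term between the ``$a$-part'' and the ``$b$-part'' occurs as a square, so that $e$ is a $2$-element of $\gamma_5(H)$; in fact one expects the sharp identity $e^{2}=1$. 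Tracking these cancellations exactly is the delicate computation, and it is here that the nilpotent quotient algorithm~\cite{Nickel98} can confirm the precise weight-$5$ contributions.

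Once $e$ is known to be a $2$-element of $\gamma_5(H)$, the hypothesis finishes the argument: a nontrivial element of $2$-power order has a power of order exactly $2$, which $\gamma_5(H)$ does not contain, so $e=1$. As $g$ was arbitrary, $ab^{-1}\in R_3(G)$ and $R_3(G)$ is a subgroup. The $2$-torsion hypothesis is indispensable and marks where the computation must be sharp: in Macdonald's finite $2$-group of Theorem~\ref{Mac} there is $a\in R_3(G)$ with $a^{-1}\notin R_3(G)$, so there $e=[a^{-1},_3 g]$ is a genuinely nontrivial $2$-element lying in $\gamma_5$, which is precisely the obstruction our hypothesis removes.
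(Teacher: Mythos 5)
Your overall frame coincides with the paper's own: the paper's proof is literally the remark that the theorem follows from detailed information about the subgroup $\langle a,b,x\rangle$ with $a,b\in R_3(G)$, $x\in G$, and that is exactly the setting you build. Your reductions are sound as far as they go: by the ingredient behind Newell's Theorem~\ref{t:Newell96} the group $H=\langle a,b,g\rangle$ is nilpotent; $a$ and $b$ remain right $3$-Engel in $H$; the map $x\mapsto [x,_3 g]\gamma_5(H)$ is a homomorphism on $H$ (your weight argument for discarding conjugation corrections is fine), so $e=[ab^{-1},_3 g]\in\gamma_5(H)$; and $\gamma_5(H)\leq\gamma_5(G)$ contains no element of order $2$, hence no nontrivial $2$-element. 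So you have correctly reduced the theorem to a single claim: $e$ is a $2$-element of $\gamma_5(H)$.

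That single claim is the entire content of the theorem, and you do not prove it --- you write that the expansion ``should'' produce squares and that ``one expects'' $e^2=1$. Two concrete problems remain. First, the inference you sketch is itself loose: ``every weight-$5$ cross term occurs as a square'' does not make $e$ a $2$-element, since a square can have infinite order, and even a product of order-$2$ factors need not have order $2$ unless the factors commute; what must actually be proved is the identity $e^2=1$ (or at least that $e$ has $2$-power order). Second, no weight counting or collection formalism can deliver that identity: in the free nilpotent group of class $5$ on $a,b,g$ (no Engel relations imposed) the element $[ab^{-1},_3 g]$ is nontrivial of infinite order and does not even lie in $\gamma_5$, so the order-$2$ statement is a genuine consequence of the relations $[a,_3 x]=[b,_3 x]=1$ for all $x\in H$, and extracting it is precisely the hard step. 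The Levi--Kappe identities of Theorem~\ref{Levi-Kappe} you propose to imitate are theorems about right $2$-Engel elements; producing their right $3$-Engel analogues is exactly what \cite{AbdollahiKhosravi2} does, via explicit analysis (with NQ \cite{Nickel98}) of the freest nilpotent group generated by two right $3$-Engel elements and one arbitrary element. Nickel's example \cite{Nickel97}, where $[a^{-1},_3 b]=[a^2,_3 b]$ is a nontrivial element of order $2$ in a group of class $5$, confirms that your conjectured identity is sharp, but it is evidence, not a proof. In short: right frame, correct reductions, missing core.
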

\begin{proof}
It follows from detail information of the subgroup $\langle a,b,x\rangle$ where $a,b\in R_3(G)$ and $x\in G$.
\end{proof}
L.-C. Kappe and Ratchford \cite{KappeRatchford99} have shown that $R_n(G)$ is a subgroup of $G$ whenever $G$ is metabelian or center-by-metabelian with certain
 extra properties.

Nickel \cite{Nickel97}  generalized Macdonald's example (Theorem \ref{Mac}) to all right $n$-Engel elements for any
$n\geq 3$ by proving that  there is a nilpotent group of class $n+2$ containing a right $n$-Engel
element $a$ and an element $b$ such that  $[a^{-1},_n b]=[a^2,_n b]$ is non-trivial.

Using the  group constructed by Newman and Nickel \cite{NewmanNickel94}, it is shown in \cite{AbdollahiKhosravi2} that
there is a group containing a right $n$-Engel element $x$ such that $x^k$ and $x^{-1}$ are not in $R_n(G)$ for all $k\geq 2$.

By Theorem \ref{t:Newell96} of Newell, we know that   $R_3(G)\subseteq Fitt(G)$ for any group $G$ and on the other hand  Gupta and Levin \cite{GuptaLevin} have  shown that the normal closure of an element in a $5$-Engel group need not be
nilpotent (see also \cite[p. 342]{Vaughan-Lee2007}).
\begin{thm}[Gupta \& Levin] {\rm  \cite{GuptaLevin} }\label{thm:GuptaLevin}
For each prime $p\geq 3$,  let $G$ be the free nilpotent of class $2$ group of exponent $p$ and of countably infinite rank. Let $M_p$ be the  the multiplicative group  of $2\times 2$ matrices over the group ring $\mathbb{Z}_pG$  of the form
 $ \begin{pmatrix}
                                        g & 0 \\
                                        r & 1 \\
                                      \end{pmatrix}$, where $g\in G$ and $r\in\mathbb{Z}_pG$. Then
 the  group $M_p$  has the following properties:
 \begin{enumerate}
\item $M_p$ has exponent $p^2$ and $\gamma_3(M_p)$ has exponent $p$;
\item $M_p$ is abelian-by-$($nilpotent of class $2)$;
\item $M_p$ is a $(p+2)$-Engel group;
\item $M_p$ has an element  whose normal closure in $M_p$ is not nilpotent.
\end{enumerate}
\end{thm}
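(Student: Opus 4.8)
\emph{Setting up the structure.} The plan is to read everything off the triangular form. Writing
\[
\mu(g,r):=\begin{pmatrix} g & 0 \\ r & 1\end{pmatrix}\qquad(g\in G,\ r\in\mathbb{Z}_pG),
\]
the product rule is $\mu(g_1,r_1)\mu(g_2,r_2)=\mu(g_1g_2,\,r_1g_2+r_2)$, so $\pi:\mu(g,r)\mapsto g$ is an epimorphism $M_p\to G$ with kernel $N=\{\mu(1,r):r\in\mathbb{Z}_pG\}\cong(\mathbb{Z}_pG,+)$. Being a $\mathbb{Z}_p$-vector space, $N$ is elementary abelian, and $M_p/N\cong G$ is nilpotent of class $2$; this gives part (2) at once and also $\gamma_3(M_p)\subseteq N$, so $\gamma_3(M_p)$ is elementary abelian (the exponent-$p$ claim following once part (4) shows it nontrivial). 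An easy induction gives $\mu(g,r)^n=\mu\!\left(g^n,\,r(1+g+\cdots+g^{n-1})\right)$, whence $\mu(g,r)^p=\mu(1,\,r\widehat g)\in N$ with $\widehat g=1+g+\cdots+g^{p-1}$, and therefore $\mu(g,r)^{p^2}=1$; taking $r=1$, $g\neq1$ gives $\widehat g\neq0$, so the exponent is exactly $p^2$. This proves part (1).

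\emph{The Engel bound.} For part (3) I would first record the action of $M_p$ on $N$: a direct computation gives $\mu(h,s)^{-1}\mu(1,t)\mu(h,s)=\mu(1,th)$, so conjugation on $N$ depends only on the top-left entry and is right multiplication. Hence for $n=\mu(1,t)\in N$ and any $y\in M_p$ with $\pi(y)=h$ one has $[n,y]=\mu(1,\,t(h-1))$, and by iteration $[n,_k y]=\mu(1,\,t(h-1)^k)$. As $p$ is odd and $h^p=1$, the identity $(h-1)^p=h^p-1=0$ holds in $\mathbb{Z}_pG$, so $[n,_p y]=1$ for all $n\in N$. For arbitrary $x$, the class-$2$ relation $[\pi(x),\pi(y),\pi(y)]=1$ forces $[x,_2 y]\in N$; applying the previous line to $[x,_2 y]$ yields $[x,_{p+2} y]=1$ for all $x,y$, i.e.\ $M_p$ is $(p+2)$-Engel.

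\emph{A non-nilpotent normal closure.} Here is where the infinite rank is spent. Take $g=x_1$ a free generator, $a=\mu(g,0)$, and $H=\langle a\rangle^{M_p}$. Its image $\overline H=\pi(H)=\langle g\rangle^{G}$ is generated by $g$ together with the central commutators $c_i:=[g,x_i]$ $(i\ge2)$, which are independent elements of order $p$ because $G$ is free nilpotent of class $2$ of infinite rank; thus $\overline H$ is \emph{infinite} elementary abelian and, writing $s=g-1$, $u_i=c_i-1$, one has $\mathbb{Z}_p\overline H\cong\mathbb{Z}_p[s,u_2,u_3,\dots]/(s^p,u_2^p,\dots)$. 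A short matrix computation gives $[a,a^{\mu(1,1)}]=\mu(1,s^2)\in\gamma_2(H)$ and $[a,\mu(x_i,0)]=\mu(c_i,0)\in H$; commuting $\mu(1,s^2)$ successively with these last elements and using $[\mu(1,t),h]=\mu(1,\,t(\pi(h)-1))$ for $h\in H$ shows
\[
\gamma_n(H)\ni\mu\!\left(1,\ s^2\,u_2u_3\cdots u_{n-1}\right)\qquad(n\ge2).
\]
Since $p\ge3$ we have $s^2\neq0$, and $s^2u_2\cdots u_{n-1}$ is a nonzero monomial (squarefree in the $u_i$) of the above truncated polynomial algebra, so $\gamma_n(H)\neq1$ for every $n$. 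Therefore $H$ is not nilpotent, proving part (4).

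\emph{Where the difficulty lies.} I expect part (4) to be the real obstacle, and precisely the claim that the iterated commutators produce the monomial $s^2u_2\cdots u_{n-1}$ and that it does not collapse. This is exactly the point where the infinite rank of $G$ is indispensable: with only finitely many generators, every sufficiently long squarefree product in the $u_i$ would vanish modulo $u_i^p=0$, and the argument would break. The hypothesis $p\ge3$ enters three times — to have a genuinely class-$2$ free group of exponent $p$, for the identity $(h-1)^p=h^p-1$ in part (3), and for $s^2\neq0$ in part (4). Pinning down $\gamma_n(H)$ exactly, rather than merely bounding it below (which is all that is needed), is routine bookkeeping once the module description of $N$ is in hand.
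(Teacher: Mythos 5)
Your proof is correct and takes essentially the same approach as the paper: the paper cites Gupta--Levin for this theorem and instead proves its $p=2$ analogue (the group $M(6,2)$) by exactly your method --- the elementary abelian kernel $N$ of the projection onto $G$ giving parts (1) and (2), the commutator action $t\mapsto t(h-1)$ on $N$ with $(h-1)^p=h^p-1=0$ combined with the class-$2$ quotient giving the Engel bound, and non-vanishing products of elements $c_i-1$ in the group algebra (which is where the infinite rank is used) giving a non-nilpotent normal closure. Your part (4) merely seeds the iterated commutators differently --- from $a=\mu(x_1,0)$ and $[a,a^{\mu(1,1)}]=\mu(1,s^2)$ rather than the paper's $Y=\mu(1,1)$ and the commutators $[X_0,X_i]$ --- which is an immaterial variation of the same idea.
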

This result of Gupta and Newman  raises naturally the following question.
\begin{qu}
Let $n$ be a positive integer. For which primes $p$, there exists a soluble $n$-Engel $p$-group $M(n,p)$ which is not a Fitting group?
\end{qu}
By Gupta-Levin's Theorem \ref{thm:GuptaLevin}, for all $p\geq 3$ and for all $n\geq p+2$,  $M(n,p)$ exists. We observed that  $M(6,2)$ also exists. In fact, a similar construction of Gupta and Levin gives $M(6,2)$.
\begin{prop}
Let $G$ be the free nilpotent of class $2$ group of exponent $4$ and of countably infinite rank.
Let $M$ be the  the multiplicative group  of $2\times 2$ matrices over the group ring $\mathbb{Z}_2G$  of the form
 $ \begin{pmatrix}
                                        g & 0 \\
                                        r & 1 \\
                                      \end{pmatrix}$, where $g\in G$ and $r\in\mathbb{Z}_2G$. Then
 the  group $M$  has the following properties:
 \begin{enumerate}
\item $M$ has exponent $8$ and $\gamma_3(M)$ has exponent $2$;
\item $M$ is abelian-by-$($nilpotent of class $2)$;
\item $M$ is a $6$-Engel group;
\item $M$ has an element  whose normal closure in $M$ is not nilpotent.
\end{enumerate}
\end{prop}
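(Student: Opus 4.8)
The plan is to realize $M$ concretely as a semidirect product and then read off the four properties from short module computations over $\mathbb{Z}_2G$. Write $m(g,r)=\begin{pmatrix}g&0\\ r&1\end{pmatrix}$; matrix multiplication gives $m(g_1,r_1)\,m(g_2,r_2)=m\big(g_1g_2,\ r_1g_2+r_2\big)$, so $M\cong A\rtimes G$ where $A=\{m(1,r):r\in\mathbb{Z}_2G\}\cong(\mathbb{Z}_2G,+)$ is abelian and normal and $M/A\cong G$. Since $G$ is nilpotent of class $2$, this is already property (2). For the exponent I would use the power rule $m(g,r)^n=m\big(g^n,\ r(1+g+\cdots+g^{n-1})\big)$. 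Because $g^4=1$ and we are in characteristic $2$, $1+g+\cdots+g^7=(1+g+g^2+g^3)(1+g^4)=0$, so $m(g,r)^8=1$; on the other hand, for $g$ of order $4$ one has $m(g,1)^4=m(1,\,1+g+g^2+g^3)\neq1$. Hence $\exp(M)=8$. As $M/A\cong G$ has class $2$, $\gamma_3(M)\le A$, which has exponent $2$; together with $\gamma_3(M)\ne1$ (which will drop out of (4)) this gives (1).

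For the Engel bound (3) the key is the commutator identity $[m(g,r),m(h,s)]=m\big([g,h],\ r([g,h]+h)+s(g^h+1)\big)$. Writing $[m(g,r),_n m(h,s)]=m(u_n,v_n)$, the $G$-coordinate obeys $u_{n+1}=[u_n,h]$; since $u_1=[g,h]\in\gamma_2(G)\le Z(G)$ is central, $u_n=1$ for all $n\ge2$. Substituting $u_n=1$ into the module coordinate and using characteristic $2$ (so that $u_n^{\,h}+1=0$) collapses the recursion to $v_{n+1}=v_n(1+h)$ for $n\ge2$. Finally $(1+h)^4=(1+h^2)^2=1+h^4=0$ because $h^4=1$, so $v_6=v_2(1+h)^4=0$ and $[m(g,r),_6 m(h,s)]=1$ for all arguments; thus $M$ is $6$-Engel.

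Property (4) is where the real work lies, and I would take $x=m(g,0)$ for a free generator $g$ of $G$ and exhibit nontrivial elements of $\gamma_{k+1}(N)$ for every $k$, where $N=\langle x\rangle^M$. Two families of elements of $N$ feed the computation: since $x^{m(1,t)}$ is a conjugate of $x$, the commutator $[x,m(1,t)]=m(1,t(g+1))$ lies in $N$ (take $t=1$ to place $m(1,g+1)$ in $N\cap A$); and for distinct generators $u_1,u_2,\dots$ of $G$ the elements $\beta_i=[x,m(u_i,0)]=m(c_i,0)\in N$ have central $G$-coordinate $c_i=[g,u_i]$. Using the clean rule $[m(1,v),m(h,s)]=m(1,v(1+h))$, the left-normed commutator $[m(1,g+1),\beta_1,\dots,\beta_k]$ equals $m\big(1,\ (1+g)(1+c_1)\cdots(1+c_k)\big)$ and lies in $\gamma_{k+1}(N)\le\gamma_{k+1}(M)$.

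The crux — the step I expect to be hardest — is to prove $(1+g)(1+c_1)\cdots(1+c_k)\neq0$ in $\mathbb{Z}_2G$ for all $k$. Here the structure of the free nilpotent class-$2$ exponent-$4$ group of infinite rank is essential: exponent $4$ forces every commutator to have order dividing $2$, so the $c_i$ are independent involutions in $Z(G)$, while $g$ has order $4$; thus $\langle g,c_1,\dots,c_k\rangle\cong\mathbb{Z}_4\times\mathbb{Z}_2^{\,k}$ and $\mathbb{Z}_2\langle g,c_1,\dots,c_k\rangle$ factors as a tensor product of the group algebras of its cyclic factors, in which $(1+g)(1+c_1)\cdots(1+c_k)$ is the pure tensor of the nonzero elements $1+g,1+c_1,\dots,1+c_k$. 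The point of insisting on distinct $c_i$ is that $(1+g)^4=0$, so repeating a single generator would kill the product; infinite rank is exactly what supplies arbitrarily many independent $c_i$. Consequently $\gamma_{k+1}(N)\ne1$ for every $k$, so $N$ is not nilpotent, which is (4); taking $k=2$ also shows $\gamma_3(M)\ne1$ and completes (1).
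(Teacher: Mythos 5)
Your proposal is correct and follows essentially the same route as the paper's proof: the same matrix/semidirect-product computations give parts 1 and 2, the $6$-Engel bound comes from the $G$-coordinate dying after two steps and the module coordinate then being annihilated by $(1+h)^4=0$ in $\mathbb{Z}_2G$, and non-nilpotence of a normal closure comes from iterated commutators whose module coordinate is a product of terms $1+c$ with $c$ ranging over commutators of free generators. The only substantive additions on your side are cosmetic refinements of the same idea: you exhibit an explicit element of order $8$, you keep every entry of the iterated commutator inside the normal closure $N$ (the paper's seed $Y=m(1,1)$ lies outside it, which only shifts the index of the lower central term reached), and you actually justify the non-vanishing of the product $(1+g)(1+c_1)\cdots(1+c_k)$ via independence of the $c_i$ and the tensor decomposition of the group algebra, a point the paper asserts without proof.
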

\begin{proof}
We first observed that the elements $ \begin{pmatrix}
                                        1 & 0 \\
                                        r & 1 \\
                                      \end{pmatrix}$
 of $M$ constitute an elementary abelian $2$-subgroup $K$. Since $exp(G)=4$ and $cl(G)=2$, it follows that $\gamma_3(M)\leq K$ and $M^4\leq K$. Thus we have  proved parts 1 and 2. For the proof of part 3, we first note that if $A=\begin{pmatrix}
                                                                          1 & 0 \\
                                                                          s & 1 \\
                                                                        \end{pmatrix}$ and $B=\begin{pmatrix}
                                                                                                g & 0 \\
                                                                                                r & 1 \\
                                                                                              \end{pmatrix}$ then $A^{-1}=\begin{pmatrix}
                                                                          1 & 0 \\
                                                                         -s & 1 \\
                                                                        \end{pmatrix}$ and $B^{-1}=\begin{pmatrix}
                                                                                                g^{-1} & 0 \\
                                                                                                -rg^{-1} & 1 \\
                                                                                              \end{pmatrix}$. Thus the commutator $[A,B]=\begin{pmatrix}
                                                                                                1 & 0 \\
                                                                                                s(g-1) & 1 \\
                                                                                              \end{pmatrix}$, and by interation
 $$[A,_4 B]= \begin{pmatrix}
                                                                                                1 & 0 \\
                                                                                                s(g-1)^4 & 1 \\
                                                                                              \end{pmatrix}=\begin{pmatrix}
                                                                                                1 & 0 \\
                                                                                                0 & 1 \\
                                                                                              \end{pmatrix}.$$
Since every element of $\gamma_3(M)$ is of the form $A$, it follows that $M$ is a $6$-Engel group.\\
Finally, for the proof of part 4, we first note that for $X_i=\begin{pmatrix}
                                                                                                x_i & 0 \\
                                                                                                1 & 1 \\
                                                                                              \end{pmatrix}$ $i\geq 0$, the commutator $[X_i,X_j]$ is of the form $\begin{pmatrix}
                                                                                                [x_i,x_j] & 0 \\
                                                                                                r & 1 \\
                                                                                              \end{pmatrix}$. Thus if $Y=\begin{pmatrix}
                                                                                                1 & 0 \\
                                                                                                1 & 1 \\
                                                                                              \end{pmatrix}$ then
$$ [Y,[X_0,X_1],\dots,[X_0,X_m]]=\begin{pmatrix}
                                                                                                1 & 0 \\
                                                                                                u_m & 1 \\
                                                                                              \end{pmatrix},$$
where $u_m=([x_0,x_1]-1)\cdots ([x_0,x_m]-1)$ and it is a non-zero element of $\mathbb{Z}_2G$ for all $m\geq 1$. It now follows that the normed closure of $X_0$ in $M$ is not nilpotent. This completes the proof.

\end{proof}
\begin{qu}
Does there exist a soluble $5$-Engel $2$-group  which is not a Fitting group?
\end{qu}
It follows that
$R_n(G)\nsubseteq Fitt(G)$ for $n\geq 5$. The following question naturally
arises.
\begin{qu}\label{qu:RnFi} What are the least positive
integers $n$, $m$ and $\ell$ such that
\begin{enumerate} \item $R_n(G_1)\nsubseteq Fitt(G_1)$ for some group $G_1$? \item $R_m(G_2)\nsubseteq B(G_2)$ for some group $G_2$? \item $R_\ell(G_3)\nsubseteq HP(G_3)$ for some group $G_3$?
\end{enumerate}
\end{qu}
Therefore, to find integer $n$ in Question \ref{qu:RnFi} we have to answer the
following.
\begin{qu} Let $G$ be an arbitrary group. Is it true that
$R_4(G)\subseteq Fitt(G)$?
\end{qu}
For right $4$-Engel elements there are  some results.
\begin{thm}[Abdollahi \& Khosravi]\rm{\cite[Theorem 1.3]{AbdollahiKhosravi1}}
Let $G$ be any group. If $a\in G$  and both $b,b^{-1}\in R_4(G)$, then
$\langle a,a^b\rangle$ is nilpotent of class at most $4$.
\end{thm}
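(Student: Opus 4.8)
The plan is to work entirely inside the two-generator subgroup $\langle a,b\rangle$. Both hypotheses $b,b^{-1}\in R_4$ and the conclusion are inherited by this subgroup (right $4$-Engelness restricts to fewer test elements, and $\langle a,a^b\rangle$ is the same subgroup computed there), so I may assume $G=\langle a,b\rangle$. The first move is to rewrite the target. Since $a^b=a[a,b]$, we have $[a,b]=a^{-1}a^b\in\langle a,a^b\rangle$ and $a^b=a[a,b]\in\langle a,[a,b]\rangle$, whence $\langle a,a^b\rangle=\langle a,[a,b]\rangle$. Setting $c:=[a,b]$, the task becomes: prove that $\gamma_5(\langle a,c\rangle)=1$, equivalently that every left-normed commutator of weight $5$ in the two generators $a$ and $a^b$ is trivial.

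Next I would assemble the Engel relations available. From $b,b^{-1}\in R_4(G)$ we get $[b^{\pm1},_4 g]=1$ for every $g\in G$; in particular $[b,_4 a]=1$. Feeding the hypothesis into Heineken's Theorem \ref{t:He}(3), namely $R_4(G)^{-1}\subseteq L_5(G)$, yields $b,b^{-1}\in L_5(G)$, so that $[g,_5 b^{\pm1}]=1$ for all $g$ as well; in particular $[a,_5 b]=1$. Lemma \ref{lem:x^y}(2) then gives $$\langle a\rangle^{\langle b\rangle}=\langle a,a^b,a^{b^2},a^{b^3},a^{b^4}\rangle,$$ and likewise $\langle b\rangle^{\langle a\rangle}=\langle b,b^a,b^{a^2},b^{a^3}\rangle$. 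The role of these finite-generation facts is to reduce the \emph{a priori} infinite family of Engel laws to a manageable finite set of relators on finitely many conjugate generators; note also $\langle a,a^b\rangle\le\langle a\rangle^{\langle b\rangle}$, so the whole problem lives inside a finitely generated object on which the commutator analysis below can be carried out.

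The core of the argument is then a commutator calculus in $\langle a,b\rangle$. I would expand a generic weight-$5$ left-normed commutator in $a$ and $a^b$, repeatedly substituting $a^b=ac$ to express everything in terms of $a$ and $c$, and then reduce using (i) the four-Engel words $[b,_4 w]=1$ and $[b^{-1},_4 w]=1$, applied to words $w$ built from $a$, to annihilate the deep commutators in which $b$ or $b^{-1}$ is iterated four times, together with (ii) the partially expanded (\emph{linearized}) consequences of these words, which control the mixed commutators containing $c$. The natural order of attack is by increasing weight: first pin down $\gamma_2$ and $\gamma_3$ of $\langle a,c\rangle$ inside progressively smaller verbal subgroups, then $\gamma_4$, and finally conclude $\gamma_5=1$.

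The main obstacle is precisely this last step. The number of weight-$5$ commutators and the bookkeeping needed to collect them modulo the Engel relations grow rapidly, and it is not transparent \emph{a priori} why $4$, rather than a larger integer, is the right bound. As with the analogous left-Engel results proved earlier in the paper, I would therefore organize and certify the reduction with the nilpotent quotient algorithm (the NQ package): use it to verify that, in the universal model — the relatively free two-generator group on $a,b$ subject to $[b^{\pm1},_4 w]=1$ for all words $w$, which maps onto every admissible $(G,a,b)$ exactly as in the $\mathcal{G}_n$ reduction of the introduction — the subgroup $\langle a,a^b\rangle$ already has trivial $\gamma_5$, and to confirm that class $4$ is genuinely attained and hence best possible.
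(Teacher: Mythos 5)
First, a point of reference: the survey states this theorem without proof, as a citation of \cite[Theorem 1.3]{AbdollahiKhosravi1}, so there is no internal argument to compare against; your proposal has to stand on its own. Its preliminary steps do stand: the reduction to $G=\langle a,b\rangle$ is legitimate (the hypotheses $b,b^{-1}\in R_4(G)$ pass to subgroups containing $b$), the rewriting $\langle a,a^b\rangle=\langle a,[a,b]\rangle$ is correct, the application of Theorem~\ref{t:He}(3) to get $b,b^{\,-1}\in L_5(G)$ is correct, and Lemma~\ref{lem:x^y}(2) does give the finite generation of $\langle a\rangle^{\langle b\rangle}$ and $\langle b\rangle^{\langle a\rangle}$ that you state.

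The genuine gap is in your final step, and it is precisely where the whole difficulty of the theorem sits. A nilpotent quotient computation (NQ) on the universal model $\mathcal{U}=\langle a,b \mid [b^{\pm1},_4 w]=1 \ \text{for all words}\ w\rangle$ can only certify identities in the \emph{nilpotent quotients} of $\mathcal{U}$: showing that the image of $\gamma_5(\langle a,a^b\rangle)$ dies in the class-$c$ quotient for every $c$ proves only that $\gamma_5(\langle a,a^b\rangle)\leq \gamma_{c+1}(\mathcal{U})$ for all $c$, i.e.\ that the desired commutators lie in $\bigcap_c\gamma_c(\mathcal{U})$. Unless $\mathcal{U}$ (or at least the relevant subgroup) is known to be nilpotent or residually nilpotent, this intersection need not be trivial, and nothing here supplies that knowledge: only $b$ and $b^{-1}$ carry an Engel condition, $a$ is arbitrary, and no theorem of Havas--Vaughan-Lee type is available for a group in which merely one generator is right $4$-Engel. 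So your argument, as written, proves the statement only for groups $G$ that are residually nilpotent, not for arbitrary $G$. The survey itself signals this obstruction: every result in it whose proof rests on NQ either carries an explicit nilpotency hypothesis (Theorem~\ref{thm:AbdollahiKhosraviR4} and Corollary~\ref{co:AbdollahiKhosraviR4nil} assume $\langle a,b,x\rangle$ is nilpotent) or first secures nilpotency from an independent theorem (Newell's Theorem~\ref{t:Newell96}, or Max plus Theorem~\ref{Peng66}) before the machine is invoked. What your plan is missing is exactly that ingredient: a structural, hand-made argument — e.g.\ exhibiting a normal subgroup of $\langle a,a^b\rangle$ of bounded nilpotency class with cyclic or abelian quotient, built from the relations $[b,_4 a]=[b^{-1},_4 a]=[a,_5 b^{\pm1}]=1$ — which establishes that $\langle a,a^b\rangle$ is nilpotent \emph{in the free setting}; only after that can a nilpotent-quotient computation legitimately pin the class down to $4$. (Your use of NQ to show class $4$ is attained is fine, since a lower bound only needs a nilpotent witness.)
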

\begin{thm}[Abdollahi \& Khosravi]{\rm \cite{AbdollahiKhosravi2}} \label{thm:AbdollahiKhosraviR4}
Let $G$ be a  $\{2,3,5\}'$-group such that $\langle a,b,x\rangle$ is nilpotent for all $a,b\in R_4(G)$ and any $x\in G$. Then $R_4(G)$
is a subgroup of $G$.
\end{thm}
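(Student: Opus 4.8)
The plan is to use the subgroup criterion together with the fact, stated in the excerpt, that $R_4(G)$ is invariant under all automorphisms of $G$, in particular under conjugation. Thus $R_4(G)$ is a normal subset containing the identity, and to prove it is a subgroup it suffices to show that $ab^{-1}\in R_4(G)$ whenever $a,b\in R_4(G)$; equivalently, that $[ab^{-1},_4 x]=1$ for every $x\in G$. I would fix $a,b\in R_4(G)$ and an arbitrary $x\in G$ and set $H=\langle a,b,x\rangle$. By hypothesis $H$ is nilpotent, so the entire verification can be carried out inside the finitely generated nilpotent group $H$, where commutator collection is available. Note that inside $H$ the elements $a$ and $b$ still satisfy the full right $4$-Engel law $[a,_4 h]=[b,_4 h]=1$ for all $h\in H$, since these relations hold against every element of $G$.

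The main tool will be a normal-form computation in the relatively free object. First I would repeatedly apply the commutator identities $[uv,z]=[u,z]^{v}[v,z]$ and $[u^{-1},z]=\bigl([u,z]^{-1}\bigr)^{u^{-1}}$ to expand $[ab^{-1},_4 x]$ as a word in the iterated commutators $[a,_i x]$, $[b,_j x]$ and the mixed commutators in $a,b,x$, then reduce it using the $4$-Engel relations on $a$ and $b$. To organise the reduction I would pass to the free nilpotent group $F$ of the class of $H$ on generators $\bar a,\bar b,\bar x$, factored by the right $4$-Engel relations imposed on $\bar a$ and $\bar b$, and compute the normal form of $[\bar a\bar b^{-1},_4 \bar x]$ in $F$. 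This is precisely the kind of calculation the nilpotent quotient algorithm performs, and it is the computational heart of the argument.

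The role of the hypothesis that $G$ is a $\{2,3,5\}'$-group is to dispose of the residual term. The expected outcome is that $[\bar a\bar b^{-1},_4\bar x]$ is not identically trivial, but that it is a $\{2,3,5\}$-element: one should be able to exhibit a $\{2,3,5\}$-number $m=2^{\alpha}3^{\beta}5^{\gamma}$ with $[ab^{-1},_4 x]^{m}=1$ in $H$, because the basic commutators surviving in the normal form carry exponents all divisible by such an $m$ (equivalently, the obstruction lives over a Lie ring whose relevant structure constants involve only the primes $2,3,5$). Since the order of $[ab^{-1},_4 x]$ then divides a $\{2,3,5\}$-number while $G$ has no nontrivial element of order $2$, $3$ or $5$, we conclude $[ab^{-1},_4 x]=1$. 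As $x$ was arbitrary, $ab^{-1}\in R_4(G)$, and hence $R_4(G)$ is a subgroup.

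The hard part will be the commutator computation establishing that the obstruction is exactly a $\{2,3,5\}$-element. Controlling the free nilpotent group modulo the $4$-Engel relations on two of its generators is delicate because the set of bad primes grows with the Engel degree: the analogues for $R_3$, namely Newell's nilpotence of $\langle a,b,x\rangle$ (Theorem \ref{t:Newell96}) and the $\gamma_5$-has-no-$2$-torsion criterion, exhibit the pattern, but pinning the bad primes down to exactly $\{2,3,5\}$ for degree $4$, and verifying that no further prime intervenes at any nilpotency class, is the crux, and is where a machine computation such as the NQ package becomes effectively indispensable.
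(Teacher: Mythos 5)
Your proposal is correct and follows essentially the same route the paper indicates for Theorem~\ref{thm:AbdollahiKhosraviR4}: reduce, via the one-step subgroup test and the conjugation-invariance of $R_4(G)$, to showing $[ab^{-1},_4 x]=1$ inside the nilpotent group $\langle a,b,x\rangle$, then defer the heart of the argument to a nilpotent quotient (NQ) computation in the relatively free object with two right $4$-Engel generators, where the obstruction is $\{2,3,5\}$-torsion that the $\{2,3,5\}'$-hypothesis kills. One caveat: your class-by-class formulation (``the free nilpotent group of the class of $H$'') tacitly requires the lower central series of that relatively free object to stabilize modulo $\{2,3,5\}$-torsion --- i.e.\ the existence of the \emph{largest} nilpotent quotient that the paper says one needs to know --- since a finite machine computation cannot separately certify every nilpotency class.
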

An important tool in the proof of Theorem \ref{thm:AbdollahiKhosraviR4} is the nilpotent quotient algorithm as implemented in the NQ package \cite{Nickel98} of {\sf GAP} \cite{GAP}. Indeed we need to know the structure of the largest nilpotent quotient of a nilpotent $\{2,3,5\}'$-group generated by  two right $4$-Engel elements and an arbitrary element. It is a byproduct of the proof of Theorem \ref{thm:AbdollahiKhosraviR4} that
\begin{co}
Let $G$ be a $\{3,5\}'$-group such that $\langle a,x\rangle$ is nilpotent for all $a\in R_4(G)$ and any $x\in G$. Then $R_4(G)$ is  inverse closed.
\end{co}
\begin{co}[Abdollahi \& Khosravi]{\rm  \cite{AbdollahiKhosravi2}}\label{co:AbdollahiKhosraviR4nil}
Let $G$ be a  $\{2,3,5\}'$-group such that $\langle a,b,x\rangle$ is nilpotent for all $a,b\in R_4(G)$ and for any $x\in G$. Then $R_4(G)$ is a nilpotent group of class at most $7$. In particular,  the normal closure of every right $4$-Engel element of  $G$ is nilpotent
of class at most $7$.
\end{co}
\begin{proof}
By  Theorem \ref{thm:AbdollahiKhosraviR4}, $R_4(G)$ is a subgroup of $G$ and so it
is a $4$-Engel group. Now it follows from a result of Havas and Vaughan-Lee \cite{HavasVaughan-Lee2005} that $4$-Engel groups are locally nilpotent, $R_4(G)$ is locally nilpotent. By \cite{Traustason95}, we know that every
locally nilpotent 4-Engel $\{2,3,5\}'$-group is nilpotent of class at most $7$.
Therefore $R_4(G)$ is nilpotent of class at most $7$. Since $R_4(G)$ is a normal set, the second part follows easily.
\end{proof}
\begin{qu}
Let $$\mathfrak{C}_4=\big\{ cl\big(\langle x\rangle^G\big) \;|\; G\;\text{is a group such that}\; x\in R_4(G) \; \text{and} \; \langle x\rangle ^G \;\text{is nilpotent}\big\},$$ where $cl(X)$ denotes the nilpotent class of a nilpotent group $X$.
\begin{enumerate}
\item Is  the set $\mathfrak{C}_4$ bounded?
\item If  the part 1 has  positive answer, what is the maximum of $\mathfrak{C}_4$? Is it $4$?
\end{enumerate}
\end{qu}
\begin{thm}[Abdollahi \& Khosravi]{\rm \cite{AbdollahiKhosravi2}}
In any $\{2,3,5\}'$-group, the normal closure of any right $4$-Engel element is nilpotent if and only if every $3$-generator subgroup in which two of the generators can be chosen to be  right $4$-Engel, is nilpotent.
\end{thm}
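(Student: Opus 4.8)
The plan is to split the biconditional into its two implications, since only one of them carries real content; the other is essentially already recorded in the preceding corollary.

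The reverse implication is the cheap one. Suppose that every $3$-generator subgroup $\langle a,b,x\rangle$ with $a,b\in R_4(G)$ and $x\in G$ is nilpotent. This is precisely the hypothesis of Corollary \ref{co:AbdollahiKhosraviR4nil}, so together with the $\{2,3,5\}'$-assumption it yields that $R_4(G)$ is a nilpotent (normal) subgroup of $G$ of class at most $7$. Hence for any right $4$-Engel element $x$ one has $\langle x\rangle^G\leq R_4(G)$, and so $\langle x\rangle^G$ is nilpotent, which is exactly the normal-closure condition. I would emphasize that this is the only place the $\{2,3,5\}'$-hypothesis is actually needed.

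For the forward implication I would assume the normal closure of every right $4$-Engel element is nilpotent, fix $a,b\in R_4(G)$ and $x\in G$, and set $H=\langle a,b,x\rangle$. First I would reduce to a nilpotent-by-cyclic situation: since $\langle a\rangle^H\leq\langle a\rangle^G$ and $\langle b\rangle^H\leq\langle b\rangle^G$ are nilpotent by hypothesis, Fitting's theorem shows that $N:=\langle a\rangle^H\langle b\rangle^H=\langle a,b\rangle^H$ is a nilpotent normal subgroup of $H$, of class at most the sum of the two classes. Because $H/N$ is generated by the image of $x$ it is cyclic, so $H=N\langle x\rangle$. The heart of the matter is then to show that $x$ acts nilpotently on $N$, that is, $[N,_m x]=1$ for some $m$. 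Here I would use that $N$ is generated by conjugates of $a$ and $b$, each lying in $R_4(G)$ and hence satisfying $[r,_4 x]=1$. Passing to the associated graded Lie ring $L=\bigoplus_{i\geq1}\gamma_i(N)/\gamma_{i+1}(N)$, conjugation by $x$ induces a Lie-ring automorphism $\alpha$; on the degree-one component $N/\gamma_2(N)$ the additive operator $(\alpha-1)^4$ annihilates every generator and therefore the whole component. Since $L$ is generated as a Lie ring by its degree-one part and $\delta:=\alpha-1$ obeys the Leibniz-type identity $\delta[u,v]=[\delta u,\alpha v]+[u,\delta v]$, an induction on bracket-weight gives that $\delta^{3i+1}$ kills $\gamma_i(N)/\gamma_{i+1}(N)$; summing the resulting inclusions $[\gamma_i(N),_{3i}x]\leq\gamma_{i+1}(N)$ over $i=1,\dots,c$ (with $c$ the class of $N$) produces $[N,_m x]=1$ with $m\leq 3c(c+1)/2$. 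This Engel-action lemma is the step I expect to be the main obstacle, as everything else is bookkeeping.

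Finally, from the data ``$N$ nilpotent and normal in $H$, $[N,_m x]=1$, and $H/N$ cyclic'' I would conclude that $H$ itself is nilpotent by a routine induction on the class of $N$: modulo the central term $\gamma_c(N)$ the quotient is nilpotent by the inductive hypothesis, and on $\gamma_c(N)$ one has $[\gamma_c(N),_j H]=[\gamma_c(N),_j x]$ (since $\gamma_c(N)$ is central in $N$), which vanishes for $j=m$. This shows $\langle a,b,x\rangle$ is nilpotent and completes the forward implication, giving the stated equivalence.
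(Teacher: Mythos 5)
Your proof is correct, but for the substantive (forward) implication you take a genuinely different route from the paper. Both you and the paper settle the reverse implication by Corollary \ref{co:AbdollahiKhosraviR4nil}, and both open the forward implication identically: $N=\langle a\rangle^H\langle b\rangle^H=\langle a,b\rangle^H$ is nilpotent by Fitting's theorem and $H/N$ is cyclic. From there the paper takes a finiteness route: since $[a,_4 x]=[b,_4 x]=1$, part 2 of Lemma \ref{lem:x^y} (Robinson's Exercise 12.3.6) shows that $\langle a\rangle^{\langle x\rangle}$ and $\langle b\rangle^{\langle x\rangle}$ are finitely generated, so $N=\langle a,b\rangle^{\langle x\rangle}$ is a finitely generated nilpotent group; hence $H=N\langle x\rangle$ satisfies the maximal condition, and Peng's Theorem \ref{Peng66} puts $a,b$ into $\overline{\zeta}(H)=\zeta_n(H)$, so that $H/\zeta_n(H)$ is cyclic and $H$ is nilpotent. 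You instead exploit the full strength of the $4$-Engel condition to show that conjugation by $x$ acts nilpotently on $N$: on the graded Lie ring of $N$ the operator $\delta=\alpha-1$ obeys the twisted Leibniz rule, $\delta^4$ annihilates the degree-one layer (the images of the conjugates of $a$ and $b$ generate it, and each conjugate is again right $4$-Engel), and the weight induction bounds the $\delta$-nilpotency degree layer by layer; an elementary induction on the class of $N$ then shows $N\langle x\rangle$ is nilpotent. Your argument is longer but self-contained---it avoids both the maximal condition and Peng's theorem---and it produces an explicit bound on the class of $H$ in terms of the classes of $\langle a\rangle^G$ and $\langle b\rangle^G$, which the paper's argument does not; the paper's version is shorter and only uses that $a,b$ are (bounded) right Engel, at the price of invoking heavier structural results. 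Your observation that the $\{2,3,5\}'$-hypothesis enters only through the Corollary matches the paper's structure as well. Two harmless slips in your bookkeeping: $\delta^{3i+1}$ killing $\gamma_i(N)/\gamma_{i+1}(N)$ gives $[\gamma_i(N),_{3i+1}x]\leq\gamma_{i+1}(N)$ (not $[\gamma_i(N),_{3i}x]$), and the resulting total is $m\leq 3c(c+1)/2+c$; since only the existence of some finite $m$ is needed, neither affects the proof.
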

\begin{proof}
By Corollary \ref{co:AbdollahiKhosraviR4nil}, it is enough to show that  a $\{2,3,5\}'$-group $H=\langle a,b,x\rangle$ is nilpotent whenever $a,b\in R_4(H)$, $x\in H$ and both $\langle a\rangle^H$ and $\langle b\rangle ^H$ are nilpotent. Consider the subgroup $K=\langle a\rangle ^H\langle b\rangle^H$ which is nilpotent by Fitting's theorem.  We have $K=\langle a,b\rangle^{\langle x\rangle}$ and since $a$ and $b$ are both right Engel, we have (see e.g., \cite[Exercise 12.3.6, p. 376]{Robinson96} that both
$\langle a\rangle^{\langle x\rangle}$ and $\langle b\rangle^{\langle x\rangle}$ are finitely generated. Thus $K$ is also finitely generated.
Hence $H$  satisfies maximal condition on its subgroups. Now Theorem \ref{Peng66}  completes the proof.
\end{proof}
It may be interesting to know that  in a nilpotent $\{2,3,5\}$-free group every two right $4$-Engel element with an arbitrary element always generate a $4$-Engel group.
\begin{thm}
Let $G$ be any group, $a,b\in R_4(G)$ and $x\in G$. If $\langle a,b,x\rangle$ is a nilpotent $\{2,3,5\}$-free group, then it is a $4$-Engel group of class at most $7$.
\end{thm}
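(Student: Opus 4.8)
The plan is to reduce the statement to a single finite nilpotent-quotient computation of the kind already used in the proofs of Theorem~\ref{thm:AbdollahiKhosraviR4} and Corollary~\ref{co:AbdollahiKhosraviR4nil}. First I would pass from $G$ to $H:=\langle a,b,x\rangle$: since $a,b\in R_4(G)$ we have $[a,_4 g]=[b,_4 g]=1$ for every $g\in G$, in particular for every $g\in H$, so $a,b\in R_4(H)$. Thus $H$ is a $3$-generated nilpotent $\{2,3,5\}$-free group in which two of the three generators are right $4$-Engel, and it suffices to prove that $H$ itself is $4$-Engel; the class bound will then follow as in Corollary~\ref{co:AbdollahiKhosraviR4nil}.

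Next I would introduce the universal object governing this situation. Let $F=\langle \alpha,\beta,\xi\rangle$ be the largest nilpotent $\{2,3,5\}$-free group generated by three elements subject only to the requirement that $\alpha$ and $\beta$ be right $4$-Engel, i.e. $[\alpha,_4 w]=[\beta,_4 w]=1$ for all $w\in F$. This is exactly the object produced by the nilpotent quotient algorithm as implemented in the NQ package~\cite{Nickel98} of {\sf GAP}~\cite{GAP}, applied to the free group on $\alpha,\beta,\xi$ with the two families of Engel relations imposed and all $\{2,3,5\}$-torsion killed. By its universal property the assignment $\alpha\mapsto a$, $\beta\mapsto b$, $\xi\mapsto x$ extends to an epimorphism $F\twoheadrightarrow H$, so every law valid in $F$ passes to $H$.

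The heart of the argument is the computation itself: the nilpotent quotient algorithm shows that $F$ stabilises at class $7$ and that $F$ satisfies the $4$-Engel law $[u,_4 v]=1$ for all $u,v\in F$ — in particular the a priori free generator $\xi$ becomes a right $4$-Engel element, so that $F=R_4(F)=L_4(F)$ is a $4$-Engel group. Since $F$ is nilpotent it is locally nilpotent, and recalling that $4$-Engel groups are locally nilpotent by~\cite{HavasVaughan-Lee2005}, the bound that every locally nilpotent $4$-Engel $\{2,3,5\}'$-group is nilpotent of class at most $7$~\cite{Traustason95} confirms the class, exactly as in Corollary~\ref{co:AbdollahiKhosraviR4nil}. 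Transporting both the $4$-Engel law and the class bound along $F\twoheadrightarrow H$ then yields that $H=\langle a,b,x\rangle$ is a $4$-Engel group of class at most $7$.

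The main obstacle is the verification, via NQ, that imposing the right $4$-Engel condition on only two of the three generators already forces the third generator, and hence the whole group $F$, to satisfy the $4$-Engel identity in the $\{2,3,5\}$-free nilpotent category; this is the non-formal input, and it is also what guarantees termination of the algorithm (the weight-by-weight quotients vanish beyond class $7$). Everything else — the restriction $a,b\in R_4(H)$, the universal epimorphism, and the final appeal to the Traustason class bound via~\cite{HavasVaughan-Lee2005} and~\cite{Traustason95} — is routine and runs parallel to the proofs of Theorem~\ref{thm:AbdollahiKhosraviR4} and Corollary~\ref{co:AbdollahiKhosraviR4nil}.
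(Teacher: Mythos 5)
Your proposal is correct and takes essentially the same approach as the paper: the paper's entire proof is a pointer to \cite[Theorem 3.4]{AbdollahiKhosravi2}, whose method is exactly what you describe --- use the NQ package to compute the largest nilpotent $\{2,3,5\}$-free group generated by two right $4$-Engel elements and one arbitrary element, verify that it stabilises at class $7$ and satisfies the $4$-Engel law, and transport the law and class bound along the universal epimorphism onto $\langle a,b,x\rangle$. The machine verification you flag as the sole non-formal input is precisely the input the paper itself defers to through its citation, supplemented as you note by the Traustason class bound used in Corollary \ref{co:AbdollahiKhosraviR4nil}.
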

\begin{proof}
A proof is similar to one of \cite[Theorem 3.4]{AbdollahiKhosravi2}.
\end{proof}
The following problem is the right analog of Problem \ref{prob:ll}.
\begin{prob} \label{prob:RR}Let $G$ be an arbitrary group. Find all  pairs $(n,m)$ of positive integers such that,
   $xy\in \overline{R}(G)$ whenever $x\in R_n(G)$ and $y\in R_m(G)$.
\end{prob}
Since the set of right $2$-Engel elements is a subgroup, $(2,2)$ belongs to the solutions of Problem \ref{prob:RR}. We show that $(2,3)$ and $(3,3)$ also belong to the solutions. In fact we prove more.
\begin{prop}
Let $G$ be an arbitrary  group, $a\in R_2(G)$ and $b,c\in R_3(G)$. Then $ab\in R_3(G)$ and $bc\in R_4(G)$.
\end{prop}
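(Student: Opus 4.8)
The plan is to prove both assertions by a direct commutator collection, after first cutting each one down to a nilpotent three‑generator group. Since the conclusions $[ab,_3 g]=1$ and $[bc,_4 g]=1$ involve only the subgroup $H=\langle a,b,g\rangle$, respectively $H=\langle b,c,g\rangle$, I would first reduce to this finitely generated setting. Because $R_2(G)\subseteq R_3(G)$, all three generators in either case lie in $R_3(G)\cap H\subseteq R_3(H)$; and by the essential ingredient behind Newell's Theorem \ref{t:Newell96} — that $\langle x,y,z\rangle$ is nilpotent whenever $x,y\in R_3$ — the group $H$ is nilpotent. Thus each Engel identity need only be checked inside a nilpotent group, which turns the problem into a finite collection and removes the difficulty that $g$ need not lie in the relevant normal closure.

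For $ab\in R_3$ I would argue directly. Put $a_1=[a,g]$ and $b_1=[b,g]$. By Theorem \ref{Levi-Kappe}(2) we have $\langle a\rangle^G\subseteq R_2(G)$, and $R_2(G)$ is characteristic, so $a_1$ and all its conjugates lie in $R_2(G)$. Iterating $[uv,g]=[u,g]^v[v,g]$ gives $[ab,g]=a_1^{b}\,b_1$, then $[ab,_2 g]=(a_2')^{b_1}[b,_2 g]$ with $a_2':=[a_1^{b},g]$, and finally $[ab,_3 g]=[(a_2')^{b_1},g]^{[b,_2 g]}\,[b,_3 g]$. Now $[b,_3 g]=1$ since $b\in R_3(G)$; and since $a_1^{b}\in R_2(G)$ we get $[a_2',g]=[a_1^{b},_2 g]=1$, whence $[(a_2')^{b_1},g]=[[a_2',b_1],g]=[a_2',b_1,g]$. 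The Levi–Kappe identity $[\alpha,x,y]=[\alpha,y,x]^{-1}$ for $\alpha\in R_2(G)$ (Theorem \ref{Levi-Kappe}(3)) then yields $[a_2',b_1,g]=[a_2',g,b_1]^{-1}=1$, using $[a_2',g]=1$. Hence $[ab,_3 g]=1$. This argument needs only that $R_2(G)$ is characteristic together with the Levi–Kappe relations, not even the nilpotency of $H$.

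For $bc\in R_4$ the same collection, now with $c$ in the role $a$ had and $b$ the other factor, gives $[bc,_3 g]=[(b_2^{*})^{c_1},g]^{c_2}$, where $c_1=[c,g]$, $c_2=[c,_2 g]$ and $b_2^{*}=[[b,g]^{c},g]$; and since $[c_2,g]=[c,_3 g]=1$ the element $c_2$ commutes with $g$, so $[bc,_4 g]=[(b_2^{*})^{c_1},_2 g]^{c_2}$. Everything therefore reduces to showing $[(b_2^{*})^{c_1},_2 g]=1$. Here lies the genuine obstacle: unlike the $R_2$ case there is no analogue of $\langle c\rangle^G\subseteq R_3$, and in fact $[b,g]=b^{-1}b^{g}$ need not be right $3$‑Engel, because right Engel sets are not inverse closed (Macdonald's Theorem \ref{Mac}); so one cannot simply annihilate $[b_2^{*},_3 g]$ as in the previous paragraph.

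This is exactly where the reduction to the nilpotent $H$ is essential. Working modulo the normal subgroup $C=\langle c\rangle^{H}$, which is nilpotent of class at most $3$ by Theorem \ref{t:Newell96}, we have $bc\equiv b\pmod C$, so $[bc,_3 g]\in C$, and the remaining point is that the resulting element of $C$ is killed by one further commutation with $g$. I expect this last collection inside the nilpotent group — controlling the cross terms produced by conjugation by $c_1$ and by $C$, and exploiting both the class bound on $C$ and the constraints $b,c\in R_3(G)$ — to be the main technical difficulty. It can be carried out by hand from the class‑$\le 3$ relations or verified mechanically through a nilpotent‑quotient computation, completing the proof that $bc\in R_4(G)$.
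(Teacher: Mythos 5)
Your proof of the first assertion, $ab\in R_3(G)$, is correct, complete, and genuinely different from the paper's. The paper proves \emph{both} assertions the same way: Newell's result (the ingredient behind Theorem \ref{t:Newell96}) shows $\langle a,b,x\rangle$ and $\langle b,c,x\rangle$ are nilpotent, and then the freest nilpotent groups with the corresponding right-Engel relations are constructed with the NQ package and the conclusion is checked in {\sf GAP}. Your collection argument for $ab$ replaces that machine computation by a short hand proof: $[ab,_3 g]=[(a_2')^{b_1},g]^{[b,_2 g]}[b,_3 g]$, with $a_1^b, a_2'\in\langle a\rangle^G\subseteq R_2(G)$ by part 2 of Theorem \ref{Levi-Kappe} and normality, so $[a_2',g]=1$, and then $[(a_2')^{b_1},g]=[a_2',b_1,g]=[a_2',g,b_1]^{-1}=1$ by part 3 of Theorem \ref{Levi-Kappe}. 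Each step checks out, and as you observe it needs neither nilpotency of $H$ nor any computer calculation; this is a cleaner and more self-contained proof of that half than the published one.

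The second assertion, $bc\in R_4(G)$, is where your proposal has a genuine gap. Your collection correctly reduces the claim to $[(b_2^{*})^{c_1},_2 g]=1$, and you correctly diagnose why the first argument does not transfer (by Theorem \ref{Mac} there is no analogue of $\langle c\rangle^G\subseteq R_3(G)$, so $[b,g]$ need not be right $3$-Engel). But at that point you stop proving and start hoping: the remark that $[bc,_3 g]$ lies in $C=\langle c\rangle^{H}$ and ``is killed by one further commutation with $g$'' is not a reduction, it is a restatement of the claim, and the class-$\le 3$ bound on $C$ from Theorem \ref{t:Newell96} cannot by itself deliver it --- there is no general reason an element of a nilpotent normal subgroup of class $3$ dies after one more commutation with a fixed $g$; the right-$3$-Engel relations on $b$ and $c$ must enter in an essential way, and that is exactly the content you defer. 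The fallback you name, a nilpotent-quotient computation, is precisely the paper's proof (legitimate because Newell's nilpotency of $K=\langle b,c,x\rangle$ makes the check in the freest nilpotent quotient conclusive), but you neither perform it nor supply the hand collection, so the $bc\in R_4(G)$ half of the proposition remains unproved in your attempt.
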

\begin{proof}
By \cite{Newell96} $K=\langle b,c,x\rangle$ is nilpotent for all $x\in G$. In particular, $H=\langle a,b,x\rangle$ is also nilpotent for all $x\in G$. Now, thanks to the NQ package \cite{Nickel98} of {\sf GAP} \cite{GAP}, one can easily construct the freest  nilpotent groups with the same defining relations as $K$ and $H$. Then   the conclusion can be easily checked through two line commands in {\sf GAP} \cite{GAP}.
\end{proof}
 By using the positive solution of restricted Burnside's problem due to  Zel'manov \cite{Zelmanov91-odd,Zelmanov91-even}, Shalev has proved that:
\begin{thm}[Shalev]{\rm \cite[Proposition D]{Shalev93}}
There is a function $c:\mathbb{N}\times \mathbb{N}\rightarrow \mathbb{N}$ such that for any $d$-generated nilpotent group  $G$  and any normal subgroup $H$ of $G$ with $H\subseteq R_n(G)$, we have $H\subseteq \zeta_{c(n,r)}(G)$.
\end{thm}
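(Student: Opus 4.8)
The plan is to linearise the right $n$-Engel condition by passing to an associated Lie algebra, and then to feed the resulting Engel condition into Zel'manov's solution of the restricted Burnside problem \cite{Zelmanov91-odd,Zelmanov91-even}; the hypotheses (finitely many generators, bounded Engel degree) are there precisely to make Zel'manov's \emph{uniform} nilpotency bound available. I would first reduce to a single element. For every $c$ the term $\zeta_c(G)$ is a subgroup, and $H$ is generated by its own elements, each lying in $R_n(G)$; hence it suffices to find $c=c(n,d)$ with $\langle a\rangle^G\subseteq\zeta_c(G)$ for a single $a\in H$, since running over a generating set of $H$ then yields $H\subseteq\zeta_c(G)$. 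As $\zeta_c(G)$ is normal, $\langle a\rangle^G\subseteq\zeta_c(G)$ is equivalent to $a\in\zeta_c(G)$, i.e. to $[a,_cG]=1$. Normality of $H$ is used to guarantee $\langle a\rangle^G\subseteq H\subseteq R_n(G)$, so that the whole ideal $\langle a\rangle^G$ consists of right $n$-Engel elements.

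Next I would pass to the associated graded Lie ring $L=\bigoplus_i\gamma_i(G)/\gamma_{i+1}(G)$ (working over $\mathbb{Q}$ on the torsion-free part and over $\mathbb{F}_p$ on each relevant $p$-part), which is generated in degree $1$ by the $d$ images $\bar x_1,\dots,\bar x_d$ of a generating set of $G$. By the Magnus/Lazard passage from group commutators to Lie brackets, the relations $[u,_ng]=1$, valid for all $u\in\langle a\rangle^G$ and all $g\in G$, become homogeneous Lie relations: writing $I$ for the graded ideal spanned by the leading terms of $\langle a\rangle^G$, every homogeneous $\bar g\in L$ satisfies $(\mathrm{ad}\,\bar g)^n\,t=0$ for all $t\in I$. (Heineken's Theorem~\ref{t:He}, applied to $u^{-1}\in L_{n+1}(G)$, shows in parallel that each element of $I$ is itself $\mathrm{ad}$-nilpotent of bounded index, which is consistent but not strictly needed below.)

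The decisive, Lie-theoretic step is then to study the restriction of the adjoint action, $\rho\colon L\to\mathfrak{gl}(I)$. This is a homomorphism of graded Lie algebras, so $\rho(L)$ is generated by the $d$ operators $\rho(\bar x_1),\dots,\rho(\bar x_d)$, and by the previous paragraph every homogeneous $\rho(\bar g)$ is a nilpotent operator of index at most $n$, hence $\mathrm{ad}$-nilpotent of index at most $2n-1$. Thus $\rho(L)$ is a $d$-generated graded Lie algebra satisfying an Engel identity, and the graded form of Zel'manov's theorem forces it to be nilpotent of class bounded by a function of $n$ and $d$ alone. A nilpotent graded Lie algebra of nilpotent operators generates a nilpotent associative algebra of degree $c=c(n,d)$; unravelling $\rho(\bar g_c)\cdots\rho(\bar g_1)=0$ on $I$ gives precisely $[I,_cL]=0$, that is, $I\subseteq\zeta_c(L)$.

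The main obstacle is the transfer of this conclusion back to $G$, together with the uniformity demanded in the previous step. One must invoke Zel'manov's theorem simultaneously over $\mathbb{Q}$ and over every residue field $\mathbb{F}_p$ and then recombine the local bounds, and one must upgrade the graded statement ``the leading term of $[a,_cG]$ vanishes'' to the honest equality $[a,_cG]=1$. For the torsion-free quotient the Mal'cev correspondence makes the group/Lie dictionary exact, and in characteristic $0$ the class bound in fact depends on $n$ only; the genuine difficulty is the handling of the finitely many ``bad'' primes $p$ not exceeding the nilpotency class of $G$, where no exact Lazard correspondence is available and where it is precisely the full force of the restricted Burnside problem, rather than Lie theory alone, that keeps the final bound $c(n,d)$ independent of the class of $G$.
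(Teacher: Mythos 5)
Your overall strategy (pass to an associated Lie ring, apply Zel'manov, recombine primes) is indeed the route the survey attributes to Shalev --- the paper itself gives no proof, only the citation to \cite{Shalev93} and the remark that it rests on Zel'manov's solution of the restricted Burnside problem \cite{Zelmanov91-odd,Zelmanov91-even} --- but as written your sketch has two genuine gaps, and the second is fatal to the route as you have set it up. First, the appeal to ``the graded form of Zel'manov's theorem'' is unfounded: the homogeneous elements of a graded Lie algebra do not form a subspace, so ``every homogeneous element of $\rho(L)$ is ad-nilpotent of index at most $2n-1$'' is \emph{not} a polynomial identity, and there is no off-the-shelf theorem giving a nilpotency bound from that hypothesis alone. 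The versions of Zel'manov's theorem that yield bounds require either the Engel condition for \emph{all} elements, or a genuine multilinear identity together with ad-nilpotency of the commutators in the generators. You do get the latter ad-nilpotency for free (such commutators are homogeneous), but you never produce the identity; obtaining it means multilinearizing the group relation $[u,_n g]=1$ (for $u\in\langle a\rangle^G$, $g\in G$) into $\sum_{\sigma\in S_n}[t,\bar g_{\sigma(1)},\dots,\bar g_{\sigma(n)}]=0$ on the pair (ideal, Lie ring), in the manner of Wilson--Zelmanov. That linearization is the real group-to-Lie content of any such proof and it is entirely absent from your sketch. (Separately, the claim that a nilpotent graded Lie algebra of nilpotent operators generates a nilpotent associative algebra of bounded degree is false as stated; it can be repaired here using the bounded index $n$ on a finite, bracket-closed, homogeneous spanning set, but it needs an argument.)

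Second, the step you yourself flag as ``the main obstacle'' --- upgrading the graded conclusion to $[a,_c G]=1$ --- is not a technicality left to the reader; with the lower-central-series grading it cannot be done. Unwind what $[I,_c L]=0$ actually says in $G$: for every $i$ one gets $[H\cap\gamma_i(G),_c G]\subseteq H\cap\gamma_{i+c+1}(G)$, i.e.\ each block of $c$ commutations gains exactly \emph{one} extra step of the filtration. Iterating gives $[H,_{kc}\,G]\subseteq\gamma_{kc+k+1}(G)$, which is trivial only when $kc+k$ exceeds the nilpotency class of $G$; in other words, the leading-term transfer recovers nothing beyond the vacuous statement that a nilpotent group of class $m$ satisfies $[H,_m G]=1$, with no bound depending only on $n$ and $d$. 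Your Mal'cev remark does dispose of the torsion-free quotient (where characteristic-zero Zel'manov applies and the class bound depends on $n$ alone), but the crux of the theorem is precisely the residual finite $p$-group case for the finitely many primes $p\leq n$, where the class and exponent are unbounded: there, both missing ingredients --- the linearized identity and a transfer mechanism that is not the naive leading-term one --- are indispensable, and invoking ``the full force of the restricted Burnside problem'' names the difficulty without resolving it. So the proposal points in the same direction as Shalev's actual argument but is not a proof: its two pivotal steps are unjustified, and the second, as structured, provably yields no class-independent bound.
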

We finish by the following question.
\begin{qu}
Are there  functions $c,e:\mathbb{N}\rightarrow \mathbb{N}$ such that for any nilpotent group  $G$  and any normal subgroup $H$ of $G$ with $H\subseteq R_n(G)$, we have $H^{e(n)}\subseteq \zeta_{c(n)}(G)$?
\end{qu}
\noindent\textbf{Acknowledgments.} This survey was completed  during the author's visit to University of Bath in 2009.
The author is very grateful to Department of Mathematical Sciences of University of Bath and specially he wishes to
 thank Gunnar Traustason for their kind hospitality. The author gratefully  acknowledges financial support of University of
 Isfahan  for his sabbatical leave study. This work is also financially  supported by the Center of Excellence for Mathematics, University of Isfahan.

\end{document}